\numberwithin{equation}{section}
\newtheorem{thm}{Theorem}[section]
\newtheorem{lemma}[thm]{Lemma}
\newtheorem{sublemma}{Sublemma}
\newtheorem{prop}[thm]{Proposition}
\newtheorem{cor}[thm]{Corollary}
{\theorembodyfont{\rmfamily}

\newtheorem{rmk}[thm]{Remark}
}
\renewcommand{\thesubsection}{\arabic{section}.\arabic{subsection}}
\newcommand{\qed}{\hfill \mbox{\raggedright \rule{.07in}{.1in}}}
\newenvironment{proof}{\vspace{1ex}\noindent{\bf
Proof}\hspace{0.5em}}{\hfill\qed\vspace{1ex}}
\newenvironment{pfof}[1]{\vspace{1ex}\noindent{\bf Proof of
#1}\hspace{0.5em}}{\hfill\qed\vspace{1ex}}
\newcommand{\A}{{\mathbb A}}
\newcommand{\R}{{\mathbb R}}
\newcommand{\C}{{\mathbb C}}
\newcommand{\Z}{{\mathbb Z}}
\newcommand{\D}{{\mathbb D}}
\newcommand{\B}{{\mathcal B}}
\renewcommand{\H}{{\mathbb H}}
\newcommand{\barH}{{\overline{\H}}}
\newcommand{\essinf}{\operatorname{ess\, inf}}
\newcommand{\supp}{\operatorname{supp}}
\renewcommand{\Re}{\operatorname{Re}}
\newcommand{\sign}{\operatorname{sign}}
\newcommand{\eps}{{\epsilon}}
\def\multi {\boldsymbol{s}}
\newcommand{\SMALL}{\textstyle}
\newcommand{\vertiii}[1]{{\left\vert\kern-0.25ex\left\vert\kern-0.25ex\left\vert #1
    \right\vert\kern-0.25ex\right\vert\kern-0.25ex\right\vert}}
\author{Dalia Terhesiu\thanks{Mathematical Institute, Niels Bohrweg 1,
2333 CA Leiden, The Netherlands}
}
\title{Krickeberg mixing for $\Z$-extensions of Gibbs Markov semiflows}
\begin{document}

 \maketitle

\begin{abstract}
We obtain Krickeberg mixing for a class of $\Z$-extensions of Gibbs Markov 
semiflows with roof function and displacement function not in $L^2$, where previous methods fail. 
This is done via a `smooth tail' estimate for the isomorphic suspension flow.
\end{abstract}

\textbf{AMS Subject Classifications: 37A25, 37A40, 37A50, 60K05}

\textbf{Keywords: group extensions of Markov semiflows,  local behaviour for roof functions, mixing.}

\section{Introduction and main results}
\label{sec-mr}
It is known that $\Z$-extensions of suspension flows over  Markov maps (Young towers)  are
used to model, for instance, tubular Lorentz flows. To simplify the dynamical system setting and get across the analysis, here we focus on  $\Z$-extensions of suspension flows over  Gibbs Markov maps.
Roughly,  a Gibbs Markov map is a  uniformly expanding Markov map with big images and good distortion properties; we refer to~\cite[Ch.\ 4]{Aaronson} for a complete definition. 
Let $F:Y\to Y$ be a Gibbs Markov map preserving an ergodic measure $\mu$ with partition $\alpha$.
Throughout we write $(Y,F, \alpha, \mu)$.
Let $r:Y \to \R_+$ be a roof function (called step time
in \cite{Tho16})
and $\phi:Y \to \Z$ a displacement function (called step function in
\cite{Tho16}) so that $r,\phi\in L^1(\mu)$.
Throughout we assume that $r$ is Lipschitz on each $a\in\alpha$,
and that $\phi$ is $\alpha$-measurable with $\int \phi \, d\mu = 0$.
The $\Z$-extension of the suspension flow over $(Y,F)$
is a flow $\psi_t: \Omega \to \Omega$ defined by $\psi_t(y,q,u) =
(y,q,u+t)$
on the space
$$
\Omega := \{ (y,q,u) \in Y \times \Z \times \R_+ : 0 \leq u \leq r(y) \}
/ \sim \qquad (y,q,r(y)) \sim (F(y), q+\phi(y), 0).
$$
This flow preserves the measure $\mu_{\psi} = \mu \times Leb_{\Z} \times
Leb_{\R}$
where $Leb_\Z$ and $Leb_{\R}$ are counting measure and one-dimensional
Lebesgue measure respectively. Moreover, $\mu_{\psi}$ is ergodic because $\mu$ is ergodic, $r$ is
finite $\mu$-a.e.\ and $\int \phi \, d\mu_\psi = 0$.

Since the $\Z$-component is there, the $\psi_t$-invariant measure is infinite, and the form of
mixing we use in this context is due to Krickeberg~\cite{Krickeberg67}.
Mixing for  $\Z$-extensions of  suspension flows over Young towers (in the sense of~\cite{Young98, Young99}) has been obtained in \cite{DN17},
but their assumptions require that $r$ and $\phi$ are $L^2$-functions. The  mixing result in \cite{DN17} is established by proving
a local limit theorem (LLT) for a large class of group extensions of suspension flows.

In this paper we obtain Krickeberg mixing for $\Z$-extensions of  suspension flows $\psi_t$ over Gibbs Markov maps  when $r,\phi$ are not in $L^2$. 
 Theorem~\ref{thm-mr} below gives
Krickeberg mixing~\cite{Krickeberg67} for  a class of $\Z$-extensions of  Gibbs Markov semiflows with $r, \phi\notin L^2(\mu)$,  satisfying assumptions (H0) and (H1)  below. This is done via  a smooth tail
estimate for the reinduced roof function $\tau$ as in Theorem~\ref{th-tailT} below.  The  present arguments used in the proof of Theorem~\ref{th-tailT} build upon~\cite{Tho16}.
Given Theorem~\ref{th-tailT}, the arguments required for the proof of Theorem~\ref{thm-mr} are essentially 
a `translation' of the arguments in~\cite{Gouezel11} in the set-up of~\cite{MT17}.

We recall that
$(\Omega, \psi_t, \mu_{\psi})$ can be modelled as a suspension flow
$(Y^\tau, \Psi_t, \mu^\tau)$ over
$(Y, \tilde F, \mu)$ where the roof function $\tau : Y \to \R_+$
is the first return time to $Y \times \{ 0 \}\times \{ 0 \}$,
$$
Y^\tau := \{ (y,u) \in Y \times \R_+ : 0 \leq u \leq \tau(y) \} / \sim
\qquad (y, \tau(y)) \sim (\tilde F(y), 0),
$$
and $\tilde F$ is such that $\psi_{\tau(y)}(y,0,0) = (\tilde F(y), 0, 0)$.
The flow $\Psi_t:Y^\tau \to Y^\tau$ is then defined as $\Psi_t(y,u) =
(y,u+t)$ modulo identifications.
Let $\mathcal N$ be the iterate of $(y,q) \mapsto (F(y), q + \phi(q))$
needed to return to
$Y \times \{ 0 \}$, then $\tau = \sum_{j=0}^{\mathcal N-1} r \circ F^j$ and $\tilde F=F^{\mathcal N}$.
Throughout, we let $\tilde \alpha = \bigvee_{j=0}^{\mathcal N-1} F^{-j}(\alpha)$
be the partition associated with $\tilde F$.  Since $(Y,F, \alpha, \mu)$ is 
a probability measure preserving Gibbs Markov map, $(Y,\tilde F, \tilde\alpha, \mu)$
is also a probability measure preserving Gibbs Markov map.

As shown in~\cite{Tho16},
under certain assumptions on $r$ and $\phi$, the tail $1/\mu(\tau>t)$ is regularly varying with index 
less than or equal to $1/2$.
To formulate our assumptions, for functions $v$ that are Lipschitz on each $a\in\alpha$, let
$|1_av|_\vartheta= \sup_{x \neq y \in a} |v(x)-v(y)| / d_\vartheta(x,y)$,
where $d_\vartheta(x,y) = \vartheta^{s(x,y)}$ for some $\vartheta\in (0,1)$
and $s(x,y) = \min\{ n :  F^n(x) \text{ and }  F^n(y) \text{ are in different elements of }\alpha\}$ 
is the separation time.
Throughout we assume

\begin{itemize}
\item[(H0)] 
\begin{itemize}
\item[(i)] The roof function $r$ is bounded from below, say  $\inf r\ge 1$, and it is Lipschitz continuous on each $a \in \alpha$ with uniformly bounded
Lipschitz constant. Also, we require that
$\phi:Y\to\Z$ is constant on partition elements with  $\int \phi\, d\mu=0$. 
\item[(ii)] The observable $(r,\phi): Y \to ( [0,\infty) , \Z)$ is aperiodic.
\end{itemize}
\end{itemize}
In (H0)(ii), we mean that $(r,\phi)$ is aperiodic  if there exists no non-trivial solution to the equation
$e^{ibr+i\theta\phi} v\circ F=v$,  for  $(b,\theta)\in \R \times [-\pi,\pi) \setminus\{(0,0)\}$.

\begin{itemize}
\item[(H1)]Let $p\in (1,2]$.
We assume that as $t\to\infty$,
\[
\mu(\phi\le -t)=\mu(\phi\ge t)=\ell(t) t^{-p},\quad \mu(r>t)=\ell(t) t^{-p}+O(t^{-\gamma}), \gamma>2,
\]
for some slowly varying\footnote{We recall that a measurable function
$\ell:(0,\infty) \to (0,\infty)$ is slowly varying if $\lim_{x\to\infty}\ell(\lambda x)/\ell(x) =1$ for all $\lambda>0$.} function $\ell$. In the case $p=2$, we do \emph{not} require that $r,\phi\in L^2(\mu)$.
\end{itemize}

We remark that the evenness of the tails is for simplicity of the exposition only. The proofs work equally well if $\mu(\phi\le -t)=C_1\ell(t) t^{-p}(1+o(1))$ and $\mu(\phi\ge t)=C_2\ell(t) t^{-p}(1+o(1))$, for some $C_1, C_2\ge 0$ and $C_1+C_2>0$.
More importantly, the `tail behaviour assumption'  (H1) on $r$ and $\phi$ are very natural in the context of the 
tubular Lorentz flow with infinite horizon, so for $p=2$. It is known that for Lorentz gases, $|r-|\phi||\le sqrt 2$ and more refined asymptotics on the tail of $\phi$ has been established in~\cite[Lemma 4.2]{PT20}.

Given $\ell$ as in (H1),  we define: i) $\ell_p=\ell$ if $p\in (1,2)$ and ii) $\ell_p(y)=2\int_1^y\frac{\ell(x)}{x}\, dx$, when $p=2$. Under (H1), throughout we let  $\ell^*$ be a slowly varying function such that $\ell^*(t)t^{-1/p}$ is the asymptotic inverse of $\ell_p(t) t^{-p}$.

Write $r^* = \int_Y r\, d\mu$.
Under (H0)(i) and  (H1)~\cite[Proposition 1.3 and Proposition 2.7]{Tho16} 
(in fact, the assumption on $r$ there is relaxed to $r\in L^1(\mu)$ and not necessarily bounded from below)
shows that
\begin{equation}
\label{eq-tailtau}
\mu(\tau\ge t)=\frac{p\sin(\pi/p)(r^*)^{1-1/p}}{\Gamma(1/p)}\frac{1}{t^{1-1/p}\, \ell^*(t)}.
\end{equation}
In particular, the index of regular variation for $1/\mu(\tau\ge t)$ is $\beta=1-1/p\le 1/2$.
Improving on the tail estimate of \eqref{eq-tailtau}, we obtain the following `smooth tail' result,
for which we need to go beyond Karamata-like estimates, but instead use arguments resembling
those used in~\cite{Erickson70} and~\cite{MT17}:
\begin{thm}
\label{th-tailT}
Assume (H0) and (H1). Set $\beta=1-1/p$.
Then there exists a constant $c>0$ that depends on $\beta$, $r^*$ and $F$ such that as $t\to\infty$,
\[
\mu(t\le \tau\le t+1)=c t^{-(1+\beta)}\ell^*(t)^{-1}(1+o(1)).
\]
\end{thm}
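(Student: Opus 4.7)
The plan is to strengthen the tail estimate \eqref{eq-tailtau} of \cite{Tho16} into the stated local statement by adapting the strong renewal arguments of \cite{Erickson70} to the present dynamical setting. The starting point is the representation $\tau=\sum_{j=0}^{\mathcal N-1}r\circ F^j$ as a random sum, whose heavy tail comes from two competing sources: the individual summands $r\circ F^j$ (with $\mu(r>t)\sim\ell(t)t^{-p}$) and the heavy tail of the return time $\mathcal N$ of the $\phi$-walk (which underlies \eqref{eq-tailtau}). The guiding heuristic is a ``single big jump vs.\ many small steps'' balance for heavy-tailed random sums; the local statement amounts, formally, to differentiating \eqref{eq-tailtau}, which should pin down the constant as $d_p=(p-1)\sin(\pi/p)(r^*)^{1-1/p}/\Gamma(1/p)$.

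Fix a truncation $K=K(t)$ with $1\ll K\ll t$ and decompose
\[
\mu(t<\tau<t+1)=M(t)+R_{\mathrm{mult}}(t)+R_{\mathrm{small}}(t),
\]
where $M$ collects the ``exactly one large summand'' contribution (some $r\circ F^{j}>K$ with the complementary partial sum $S^{(j)}_*:=\tau-r\circ F^j$ in a controlled range), $R_{\mathrm{mult}}$ the contribution from two or more large summands, and $R_{\mathrm{small}}$ the contribution from no large summand. The error $R_{\mathrm{mult}}$ is bounded using $\mu(r>K)^2\sim\ell(K)^2K^{-2p}$ together with a polynomial truncation of $\mathcal N$; a suitable choice of $K$ makes it $o(t^{-(2-1/p)}\ell^*(t)^{-1})$. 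For $R_{\mathrm{small}}$ all summands are $\leq K$, so one can appeal to the spectral analysis of \cite{Tho16} (the twisted transfer operator for bounded observables admits the usual LLT-type expansion) together with the tail of $\mathcal N$ to show it is negligible. The main term is
\[
M(t)=\sum_{j\geq 0}\mu\Bigl(\mathcal N>j,\ r\circ F^j\in\bigl(t-S^{(j)}_*,\, t+1-S^{(j)}_*\bigr),\ S^{(j)}_*\leq K\Bigr);
\]
Gibbs--Markov decoupling of $r\circ F^j$ from the event $\{\mathcal N>j,\ S^{(j)}_*\leq K\}$, combined with the local density $\mu(r\in(t,t+1))\sim p\ell(t)t^{-p-1}$ (which holds because the remainder $O(t^{-\gamma})$ with $\gamma>2$ in (H1) dominates the subleading differences arising from the slow variation of $\ell$), reduces $M(t)$ to a sum controllable by the $\mathcal N$-tail analysis already carried out in \cite{Tho16}. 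Converting between $\ell$ and $\ell^*$ via its defining relation then yields the claimed constant.

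The principal obstacle is the decoupling step: the summands $r\circ F^j$ are not independent and $\mathcal N$ is a stopping time coupled to the whole orbit, so the Erickson machinery does not apply directly. I would handle this via a uniform spectral gap for the transfer operator of $\tilde F$ on an appropriate Lipschitz-type Banach space, in the spirit of \cite{Tho16}, with the aperiodicity (H0)(ii) ruling out resonant cancellations. The delicate point is that the whole argument must remain at the level of tails and their first differences -- bypassing a local limit theorem, in the manner of \cite{Erickson70} -- which is the main novelty over previous approaches requiring $r,\phi\in L^2$.
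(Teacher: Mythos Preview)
Your decomposition misidentifies the dominant mechanism. The tail of $\tau$ has index $1-1/p\le 1/2$, while $r$ has tail index $p>1$; since $p>1-1/p$, the event $\{\tau\approx t\}$ is \emph{not} driven by a single large $r\circ F^j$ but by $\mathcal N$ being of order $t/r^*$. Concretely, in your term $M(t)$ the constraint $S^{(j)}_*\le K$ together with $\inf r\ge 1$ forces $j\le K$, so $M(t)\ll K\cdot\mu\bigl(r\in(t-K,t+1)\bigr)$; even granting your local density this is $O(K^2\ell(t)t^{-p-1})$, which for any $K\ll t$ is $o\bigl(t^{-(2-1/p)}\ell^*(t)^{-1}\bigr)$. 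Thus $M(t)$ is an error term, and the main contribution sits in what you call $R_{\mathrm{small}}$ --- precisely the term you propose to discard. Extracting the asymptotics from $R_{\mathrm{small}}$ amounts to a local limit statement for $\mathcal N$ (the first return time of a dependent heavy-tailed walk), which is the actual difficulty and is not addressed by your outline. A secondary gap: the claim $\mu(r\in(t,t+1))\sim p\ell(t)t^{-p-1}$ does not follow from (H1), since a slowly varying $\ell$ need not satisfy $\ell(t)-\ell(t+1)=o(\ell(t)/t)$ and the $O(t^{-\gamma})$ remainder controls only the tail, not its increments.

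The paper takes a completely different, Fourier-analytic route that bypasses any probabilistic decomposition of $\tau$. It works with the measure $d\nu_0(x)=x\,d(\mu\circ\tau)(x)$, observes that its Laplace transform equals $A(s):=i\frac{d}{db}\int_Y\hat R(s)1\,d\mu$, and recovers $\nu_0([t,t+1])$ from $A(-ib)$ via an Erickson-type inversion formula (Propositions~\ref{prop:inv}--\ref{prop:Fga}). The analytic input is the asymptotics of $A(u-ib)$ as $u,b\to 0$ (Lemma~\ref{lemma-derS}), obtained not from the induced operator $\hat R$ directly but from the key identity \eqref{eq-rel} expressing $(I-\hat R(s))^{-1}$ as $\frac{1}{2\pi}\int_{-\pi}^\pi(I-\hat L(s,i\theta))^{-1}\,d\theta$, where $\hat L$ is the doubly perturbed transfer operator of the base Gibbs--Markov map. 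The eigenvalue expansion \eqref{eq-eignddouble} for $\hat L$ then does all the work, and no local information on $r$ or $\mathcal N$ individually is ever needed.
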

\begin{rmk} If $r,\phi$ satisfy (H0) and $r,\phi\in L^2$, we do not require any special tail assumption and several steps in the proof of Theorem~\ref{th-tailT} can be considerably simplified.

In the present proofs we do not make any attempt to obtain the precise expression of the constant $c$, though clearly $c$ has to match the precise constant in~\eqref{eq-tailtau}.
\end{rmk}
We mention that  Theorem~\ref{th-tailT}  on the smooth tail of $\tau$ is a result of independent interest
and in this work we use this reult to obtain mixing for the semiflow $\Psi_t$
(and thus mixing for the $\Z$-extensions of  the suspension flow $\psi_t$).

Define $m(t)=\int_0^t \mu(\tau>x)\, dx$ and note that $m(t)$ is regularly varying with index $1-\beta=1/p$ (this follows directly from~\eqref{eq-tailtau}).
With this specified we state

\begin{thm}
\label{thm-mr}Assume (H0) and (H1).
Let $A, B\subset Y$  with $A\in\tilde\alpha$
and $B$ measurable. Let $A_1=A\times[a_1,a_2]$, $B_1=B\times[b_1,b_2]$ with $a_1\le a_2\le \inf_A\tau$ and $b_1\le b_2\le \inf_B\tau$.
Set $d_\beta=\frac{\sin\pi\beta}{\pi}$.
Then
\[
\lim_{t\to\infty} m(t)\mu^{\tau}(A_1\cap \Psi_t^{-1}B_1)  =d_\beta\mu^{\tau}(A_1)\mu^{\tau}(B_1).
\]
\end{thm}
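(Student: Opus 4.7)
The plan is to transcribe the discrete-time operator renewal argument of~\cite{Gouezel11} into the continuous-time framework of~\cite{MT17}, using Theorem~\ref{th-tailT} as the decisive analytic input. Throughout, set $\beta:=1-1/p\in(0,1/2]$; the target value of $d_\beta$ is $\sin\pi\beta/\pi$, which is forced by the reflection formula $\Gamma(\beta)\Gamma(1-\beta)=\pi/\sin\pi\beta$.

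The first step is to represent the correlation by a renewal sum. Let $\hat R$ denote the transfer operator of $\tilde F$ on $L^1(\mu)$, acting on the standard Banach space of piecewise $d_\vartheta$-Lipschitz functions on which $\hat R$ has a spectral gap. Write $\rho(t):=\mu^\tau(A_1\cap\Psi_t^{-1}B_1)$. A Fubini decomposition along the laps of the semiflow yields
\[
\rho(t)=\sum_{n\ge 0}\int_Y 1_A(y)\,1_B(\tilde F^n y)\,H\bigl(t-\tau_n(y)\bigr)\,d\mu(y),\qquad \tau_n=\sum_{j=0}^{n-1}\tau\circ\tilde F^j,
\]
where $H:=1_{[0,a_2-a_1]}*1_{[0,b_2-b_1]}$ has total mass $\mu^\tau(A_1)\mu^\tau(B_1)/(\mu(A)\mu(B))$. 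Taking Laplace transforms in $t$ and using $\hat R^n(e^{-s\tau_n}\cdot)=\hat R_s^n$, with the twisted operator $\hat R_s f:=\hat R(e^{-s\tau}f)$, converts this into the resolvent identity
\[
\hat\rho(s)=\hat H(s)\,\bigl\langle 1_B,(I-\hat R_s)^{-1}1_A\bigr\rangle_\mu.
\]

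Aperiodicity of $(r,\phi)$ in (H0)(ii) passes to aperiodicity of $\tau$ for $\tilde F$, and together with the Gibbs--Markov spectral gap this produces, for $\Re s\ge 0$ small, a simple leading eigenvalue $\lambda(s)$ of $\hat R_s$ with $\lambda(0)=1$, uniformly isolated from the rest of the spectrum on any compact $b$-box away from $s=0$. Standard perturbation theory then gives $\hat\rho(s)=\mu^\tau(A_1)\mu^\tau(B_1)/(1-\lambda(s))+O(1)$ as $s\to 0^+$, while (H1) together with~\eqref{eq-tailtau} yields $1-\lambda(s)\sim \Gamma(1-\beta)\,r^*\,s^\beta/\ell^*(1/s)$. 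This settles the correct leading order on the Laplace side.

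The remaining --- and genuinely hard --- step is the pointwise inversion. Because $\beta\le 1/2$ and $\rho$ is not monotone, the Hardy--Littlewood Tauberian theorem does not apply. The route of~\cite{Gouezel11} replaces monotonicity by the \emph{smooth tail} $\mu(t<\tau<t+1)=d_p\,t^{-(1+\beta)}\ell^*(t)^{-1}(1+o(1))$ provided by Theorem~\ref{th-tailT}: this allows a term-by-term comparison of the renewal kernel $\sum_n \hat R_s^n$ with an explicit Erickson-type continuous-time renewal kernel as in~\cite{Erickson70}, and bounds the error uniformly on the scales $t\sim 1/s$ that govern the inversion. Combined with the discrete-to-continuous passage of~\cite{MT17}, one arrives at
\[
m(t)\rho(t)\longrightarrow\frac{1}{\Gamma(\beta)\Gamma(1-\beta)}\,\mu^\tau(A_1)\mu^\tau(B_1)=\frac{\sin\pi\beta}{\pi}\,\mu^\tau(A_1)\mu^\tau(B_1),
\]
as required. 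The main obstacle is precisely this inversion step: each of (H0)(ii), (H1) and Theorem~\ref{th-tailT} is used in an essential way, the smooth tail being what replaces monotonicity in the regime $\beta\le 1/2$. The remainder of the argument is essentially bookkeeping in the style of~\cite{MT17}.
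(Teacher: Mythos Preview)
Your outline is correct and matches the paper's approach: reduce to a renewal-type statement via the suspension-flow representation (the paper's Corollary~\ref{cor-mixing}), then carry out the Fourier-side inversion by translating Gou\"ezel's Doney-type splitting into continuous time (the paper's abstract Theorem~\ref{thm-main_smallbeta}). One point of precision worth flagging: the smooth tail of Theorem~\ref{th-tailT} is not used for a direct Erickson-style comparison of renewal kernels as you describe, but rather to verify the operator-norm hypothesis $\|R(\omega(t-\tau))\|_{\B}=O(t^{-(\beta+1)})$ (the paper's (H)(iii), checked in Section~\ref{sec-verH} via Proposition~\ref{prop-Hiii}), which is the continuous-time analogue of Gou\"ezel's assumption $\|R_n\|=O(\mu(\varphi=n))$ and is what feeds the bounds on the pieces $I_j(t)$ in the $T_0,\dots,T_3$ decomposition of Proposition~\ref{prop-int}.
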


\begin{rmk} We do not need the full strength of Theorem~\ref{th-tailT} in the proof of Theorem~\ref{thm-mr}, but only that 
$\mu(t\le \tau\le t+1)=O(t^{-(1+\beta)}\ell^*(t)^{-1})$.

It might be that for  Theorem~\ref{thm-mr} this big O assumption can be relaxed further
	given recent results of~\cite{CaravennaDoney}
on necessary and sufficient conditions for the asymptotic of renewal sequences with infinite mean.
\end{rmk}

We recall that: a) \cite{DN-PC} obtained mixing for a class of Markov suspension flows with regular variation of index $\beta\in (0, 1)$;
b) \cite{MT17} obtained mixing under mild abstract assumptions for, not necessarily Markov, 
suspension flows with regularly varying tails of roof functions of index in $\beta\in(1/2, 1]$.

Although the mixing result~\cite[Theorem 5.1]{DN-PC} holds for all $\beta\in (0, 1)$, it is explicitly stated in terms of suspension flows over LSV maps (as in~\cite{LiveraniSaussolVaienti99}). It is not clear to us if the argument of~\cite[Proposition 5.3]{DN-PC}, on which~\cite[Theorem 5.1]{DN-PC} relies,
can be easily generalized to Gibbs Markov maps that do not arise from inducing LSV maps to good sets.

Although the mixing result~\cite[Theorem 2.3]{MT17} does not apply here due to the range of $\beta$, the previous big tail result of~\cite{Tho16} 
as recalled in~\eqref{eq-tailtau} together with~\cite[Theorem 2.4]{MT17} ensure a liminf result established, among others, via an LLT for the roof function $\tau$
and the base of the semiflow $(Y,\tilde F)$ as in~\cite[Theorem 2.7]{MT17}. 

We believe that the arguments in this paper can be adjusted to work for $\Z^2$-extensions of Gibbs Markov semiflows.
We also believe that the method can be applied to the infinite horizon tubular Lorentz flow which can be viewed as a $\Z^d$-extension ($d=1,2$)
of a suspension flow over a Young tower with exponential tails (see \cite{SV04} for the treatment of the $\Z^d$-extension over the map). Here we restrict
to $\Z$-extensions of the suspension flows over Gibbs Markov maps.

\medskip {\bf Notation:}
We write $a_n \sim b_n$ if $a_n/b_n \to 1$. We use
``big O'' and $\ll$ interchangeably, writing
$a_n=O(b_n)$ or $a_n\ll b_n$ if there is a constant
$C>0$ such that $a_n\le Cb_n$ for all $n\ge 1$. 
Similarly, $a_n = o(b_n)$ means that $\lim_{n \to \infty} a_n/b_n = 0$.

\section{Strategy and proof of Theorem~\ref{th-tailT}}
\label{sec-stratT1}

By definition, $(Y,\tilde F, \tilde\alpha, \mu)$
is a probability measure preserving Gibbs Markov map.
Let  $R$ be the transfer operator defined by  $\int_Y R v_1 v_2\, d\mu=\int_Y v_1 v_2\circ \tilde F\, d\mu$, $v_1\in L^1(\mu)$, $v_2\in L^\infty(\mu)$.
Let $\hat R(s)v= R(e^{-s\tau}v)$, $s\in\C$ be the perturbation of $R$.

First, we collect some identities. For $u\ge 0$, we define the measures $\nu_u$ on the positive real  line such that
$\frac{d\nu_u}{d\tau_*\mu}(t)=te^{-ut}$; in particular, $\frac{d\nu_0}{d\tau_*\mu}(t)=t$. 
With these defined we see that
\begin{align}
\label{eq-nurel}
t\mu(t\le \tau \le t+1)\le \nu_0([t,t+1])\le (t+1)\mu(t \le \tau \le t+1).
\end{align}
Hence,
\begin{align}
\label{eq-smalltder}
t\mu(t\le \tau\le t+1)=\nu_0([t,t+1])+e(t),
\end{align}
where $e(t)=O(\mu(t\le \tau\le t+1))$. 
 Note that for $s=u-ib$, $u\ge 0$ and $b\in\R$,
\begin{align}
\label{eq-tauR}
 \int_0^\infty e^{-st}\, d\tau_*\mu=\int_Y e^{-s\tau}\, d\mu=\int_Y\hat R(s)1\, d\mu.
\end{align}
For $u>0$, using the definition of $\nu_u$ for the first equality and  differentiating in $b$ for the second gives
\begin{align}
\label{eq-smt2}
\int_0^\infty  e^{-st}\, d\nu_0(t)=\int_0^\infty  e^{ibt}\, d\nu_u(t)=-i\frac{d}{db}\Big(\int_Y \hat R(s) 1, d\mu\Big)
=A(s).
\end{align}

By~\eqref{eq-nurel},  $\nu_0([0, L])=\int_0^L t d\tau_*\mu(t)\ge \sum_{j=0}^{L-1} j\mu(j\le \tau\le j+1)\ge (L-1)\mu(\tau\ge L)$. This  together with~\eqref{eq-tailtau}
implies that $\nu_0([0, L])$ grows like $L^{1/p}\ell^*(L)^{-1}$ which goes to $\infty$ as $L\to\infty$. So,
$\nu_0$ is an infinite measure.

Our strategy for obtaining the asymptotics of $\mu(t \le \tau \le t+1)$, as $t\to\infty$
stated in Theorem~\ref{th-tailT}
is to use
an analogue of~\cite[Inversion formula, Section 4]{Erickson70} obtained in~\cite[Proposition 4.1]{MT17} (for different purposes recalled in Section~\ref{sec-abstrsetup}).
The key new ingredient required to apply this strategy to the present set-up is
Proposition~\ref{prop:Fga} below; its proof is postponed to Section~\ref{sec-a}.
To state this result, we need more terminology.

For each $a>0$, let $\hat g_a(0)=1$
and for $x\neq 0$, define
\begin{align}
\label{eq-hatga}
\hat g_a(x)=\frac{2(1-\cos ax)}{a^2x^2}
\end{align}
and note that
$\hat g_a$ is the Fourier transform of
\begin{align}\label{eq-Fourga}
g_a(b)=
\begin{cases}
a^{-1}(1-|b|/a), &  |b|\leq a, \\
0, & |b|>a.
\end{cases}
\end{align}

\begin{prop} \label{prop:Fga} Let $\zeta(t)=t^{1-1/p}\ell^*(t)^{-1}$.
 For all $a>0$ and $\lambda\in\R$,
\begin{align*}
\lim_{t\to\infty} \zeta(t)\int_{-\infty}^\infty e^{-itb} g_a(b+\lambda) A(-ib)\, db=\pi d_p g_a(\lambda)\in\R,
\end{align*}
where $d_p$ is a positive constant that depends only on $p$ and $F$.
\end{prop}

Given Proposition~\ref{prop:Fga}, the proof of Theorem~\ref{th-tailT} below is similar to the argument used in the proof of~\cite[Theorem 2.3]{MT17}.
Since it is short, we provide the complete proof along with the auxiliary results.
Given $V(x):=V([0,x])=\frac{1}{2}(\nu_0([0,x])+\nu_0(-[0,x]))$ (with $\nu_0(-I)=\nu(\{x: -x\in I\})$) we have
\begin{prop}{\bf{~\cite[Proposition 4.1]{MT17}.}}
\label{prop:inv}
Let $g:\C\to\C$ be a continuous compactly supported function
with Fourier transform
$\hat g(x)=\int_{-\infty}^\infty e^{ixb} g(ib)\, db$
satisfying
$|\hat g(x)|=O(x^{-2})$ as $x\to\pm\infty$.
Then for all $\lambda,t\in\R$,
\begin{align*}
\int_{-\infty}^\infty e^{-i\lambda(x-t)}\hat g(x-t)\,dV(x)
=&\lim_{u\to 0}\int_{-\infty}^\infty e^{-itb} g(b+\lambda)\Re A(u-ib)\, db\\
=&\int_{-\infty}^\infty e^{-itb} g(b+\lambda)\Re A(-ib)\, db.
\end{align*}
\end{prop}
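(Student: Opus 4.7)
The plan is to recognize $\Re A(u-ib)$ as a Fourier--Laplace transform of the symmetric measure $V$, then apply Fubini to swap the $b$- and $x$-integrations, perform a translation to read off $\hat g$, and finally pass to the limit $u\to 0$ along both sides of the identity.

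First, since $\nu_0$ is supported on $[0,\infty)$ and $V$ is its symmetrization around $0$, for $u\geq 0$ and $b\in\R$ one has
\begin{align*}
\Re A(u-ib)
&=\Re\int_0^\infty e^{-(u-ib)x}\,d\nu_0(x)
=\int_0^\infty e^{-ux}\cos(bx)\,d\nu_0(x)\\
&=\int_{-\infty}^\infty e^{-u|x|+ibx}\,dV(x),
\end{align*}
where the last identity uses evenness of $V$ and of $e^{-u|x|}$ (the $\sin$ term drops out). Substituting into the right-hand side of the proposition at level $u>0$ and invoking Fubini (justified because $g$ is compactly supported in $b$ and $e^{-u|x|}$ makes the double integral absolutely convergent for fixed $u>0$) gives
\begin{align*}
\int_{-\infty}^\infty e^{-itb}g(i(b+\lambda))\Re A(u-ib)\,db
=\int_{-\infty}^\infty e^{-u|x|}\int_{-\infty}^\infty e^{ib(x-t)}g(i(b+\lambda))\,db\,dV(x).
\end{align*}
A change of variables $b'=b+\lambda$ turns the inner integral into $e^{-i\lambda(x-t)}\hat g(x-t)$, so the previous display equals $\int e^{-u|x|}e^{-i\lambda(x-t)}\hat g(x-t)\,dV(x)$.

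The first equality of the proposition then follows by letting $u\to 0$ inside this last integral via dominated convergence: the decay $|\hat g(x-t)|=O((x-t)^{-2})$ combined with the polynomial growth of $V$ implied by \eqref{eq-tailtau} and \eqref{eq-nurel} produces a $V$-integrable majorant, and pointwise $e^{-u|x|}\to 1$. For the second equality, we must move the $u\to 0$ limit inside the $b$-integral on the original right-hand side. Compact support of $g$ confines $b$ to a bounded set, so dominated convergence applies once one has (a) pointwise convergence $\Re A(u-ib)\to \Re A(-ib)$ for $b$ in this compact set and (b) a uniform-in-$u$ integrable majorant in $b$.

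The main obstacle is the behaviour of $A(s)$ near $s=0$: because $\nu_0$ is an infinite measure, $A$ is not an ordinary Laplace transform and $\Re A(u-ib)$ may blow up as $(u,b)\to(0,0)$. To address this I would use the Gibbs--Markov spectral decomposition of $\hat R(s)=R(e^{-s\tau}\cdot)$ near $s=0$, which (by the analysis in Section~\ref{sec-a}, together with the regular variation of $\mu(\tau>t)$ from \eqref{eq-tailtau}) identifies the singular part of $\int_Y\hat R(s)1\,d\mu$ and hence of $A(s)=i\frac{d}{db}\int_Y\hat R(s)1\,d\mu$. This yields continuity of $\Re A(u-ib)$ as $u\to 0^+$ for each $b\ne 0$, together with a uniform-in-$u$ bound of the form $|\Re A(u-ib)|\leq\Phi(b)$ on $u\in[0,u_0]$ with $\Phi$ locally integrable on the support of $g$; this is enough to close the dominated convergence argument and obtain the second equality.
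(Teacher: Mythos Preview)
Your proposal is correct and follows essentially the same route as the paper. The paper packages the weight $e^{-u|x|}$ into auxiliary finite measures $V_u$ with $dV_u=e^{-u|x|}\,dV$, derives the identity $\int e^{-i\lambda(x-t)}\hat g(x-t)\,dV_u(x)=\int e^{-itb}g(i(b+\lambda))\Re A(u-ib)\,db$ by Fubini, and then passes to the limit $u\to 0$ on each side separately (Lemmas~\ref{emma-remi} and~\ref{emma-remi2}); the integrable majorant on the $b$-side is obtained from exactly the bound $|A(u-ib)|\ll |u-ib|^{-1/p}\tilde\ell(1/|u-ib|)$ coming from Lemma~\ref{lemma-derS}, which is the spectral input you invoke.
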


\begin{prop}{~\bf{\cite[Lemma 8]{Erickson70}}}
\label{prop:Erickson}
Let $\{\mu_t,\,t>0\}$ be a family of measures such that $\mu_t(I)<\infty$ for every
compact set $I$ and all $t$. Suppose that for some constant $C$,
\begin{align*}
\lim_{t\to\infty}\int_{-\infty}^{\infty} e^{-i\lambda x}\hat g_a(x)\, d\mu_t(x)=
C\int_{-\infty}^{\infty} e^{-i\lambda x}\hat g_a(x)\, dx,
\end{align*}
for all $a>0$, $\lambda\in\R$. Then $\mu_t(I)\to C |I|$
for every bounded interval $I$, where $|I|$ denotes the length of $I$. \qed
\end{prop}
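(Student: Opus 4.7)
The plan is to recognise the hypothesis as pointwise convergence of Fourier transforms of the finite positive measures $d\nu_t^a(x):=\hat g_a(x)\,d\mu_t(x)$, upgrade this to weak convergence by L\'evy's continuity theorem, and then use that $\hat g_a\to 1$ uniformly on compact sets as $a\downarrow 0$ to peel off the weight $\hat g_a$ and conclude $\mu_t(I)\to C|I|$.

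The first step is to check that $\nu_t^a$ has bounded total mass: specializing to $\lambda=0$ in the hypothesis yields $\nu_t^a(\R)=\int\hat g_a\,d\mu_t\to C\int\hat g_a\,dx=2\pi C/a$. The hypothesis itself says that the Fourier transform $\widehat{\nu_t^a}(\lambda)=\int e^{-i\lambda x}\,d\nu_t^a(x)$ converges pointwise on $\R$ to the Fourier transform of the finite measure $C\hat g_a(x)\,dx$, which is a scaled triangle function supported in $[-a,a]$ and in particular continuous at $0$. L\'evy's continuity theorem (for finite positive measures) then delivers the weak convergence $\nu_t^a\Rightarrow C\hat g_a(x)\,dx$, and hence
\[
\int f(x)\,\hat g_a(x)\,d\mu_t(x)\;\longrightarrow\;C\int f(x)\,\hat g_a(x)\,dx
\]
for every continuous compactly supported $f$.

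For the second step I would sandwich $\mathbf 1_I$ between continuous approximations. Fix $I=[\alpha,\beta]$ and $M>\max(|\alpha|,|\beta|)$. For $\eps>0$ pick continuous $f_\eps^\pm:\R\to[0,1]$ supported in $[-M,M]$ with $f_\eps^-\le\mathbf 1_I\le f_\eps^+$, $f_\eps^-\equiv 1$ on $[\alpha+\eps,\beta-\eps]$, and $f_\eps^+$ vanishing outside the $\eps$-neighbourhood of $I$. The elementary bound $1-\cos y\ge y^2/2-y^4/24$ (valid on $\R$ by successive integration starting from $1-\cos y\ge 0$ and the evenness of both sides) gives $\hat g_a(x)\ge 1-a^2M^2/12$ on $[-M,M]$, while $\hat g_a\le 1$ everywhere. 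Combined with the previous display applied to $f_\eps^\pm$,
\[
(1-a^2M^2/12)\,\limsup_{t\to\infty}\mu_t(I)\;\le\;C\int f_\eps^+\hat g_a\,dx\;\le\;C(|I|+2\eps),
\]
and symmetrically $\liminf_{t\to\infty}\mu_t(I)\ge C(1-a^2M^2/12)(|I|-2\eps)$.

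Sending $\eps\downarrow 0$ and then $a\downarrow 0$ squeezes both bounds to $C|I|$. The only step requiring care is the invocation of L\'evy's theorem, for which finiteness and uniform boundedness of $\nu_t^a(\R)$ is needed---and this is supplied for free by the $\lambda=0$ case of the hypothesis, so no separate tightness argument is required. The remaining work is elementary approximation theory with the two free parameters $a$ and $\eps$ tuned in that order.
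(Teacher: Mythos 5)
Your proof is correct. Note that the paper itself offers no argument for this proposition -- it is imported verbatim as \cite[Lemma 8]{Erickson70} with a \qed and no proof -- so the only meaningful comparison is with Erickson's original. Your route (view $\hat g_a\,d\mu_t$ as a finite positive measure, apply L\'evy's continuity theorem using the $\lambda=0$ case of the hypothesis for the mass bound, then remove the weight $\hat g_a$ by a two-parameter sandwich in $\eps$ and $a$) is a clean, self-contained alternative to the more common route via Helly/vague compactness, where one extracts vague limit points of $\mu_t$, identifies each with $C\cdot\mathrm{Leb}$ by Fourier uniqueness on the sets $\{\hat g_a>0\}$ for varying $a$, and concludes from uniqueness of the limit point. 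What your version buys is that no separate tightness or selection argument is needed: the uniform mass bound really does come for free from the hypothesis at $\lambda=0$, and the quantitative bounds $1-a^2x^2/12\le\hat g_a(x)\le 1$ on $[-M,M]$ make the final squeeze explicit. The only implicit points worth flagging are that $C\ge 0$ (forced by positivity of $\int\hat g_a\,d\mu_t$ at $\lambda=0$, and needed when you bound $C\int f_\eps^+\hat g_a\,dx$ by $C(|I|+2\eps)$) and that $a$ must be taken small enough that $1-a^2M^2/12>0$ before dividing; both are trivially satisfied in your ordering of limits.
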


\begin{pfof}{Theorem~\ref{th-tailT}}
With the convention $I+t=\{x:x-t\in I\}$, let  
\[
\mu_t(I)=2\zeta(t)V(I+t)=\zeta(t)(\nu_0(I+t)+\nu_0(-I-t))\]
and note that $\zeta(t)\nu([t,t+1])=\mu_t([0,1])$.
Now,
\begin{align*}
\zeta(t)\int_{-\infty}^\infty e^{-i\lambda(x-t)}\hat g_a(x-t)\,dV(x)&=
\zeta(t)\int_{-\infty}^\infty e^{-i\lambda x}\hat g_a(x)\,dV(x+t)\\
&=\frac12\int_{-\infty}^{\infty} e^{-i\lambda x}\hat g_a(x)\, d\mu_t(x).
\end{align*}
Since $\hat g_a$ satisfies the assumptions of Proposition~\ref{prop:inv},
\begin{align*}
\int_{-\infty}^{\infty} e^{-i\lambda x}\hat g_a(x)\, d\mu_t(x)=2\zeta(t)\int_{-\infty}^\infty e^{-itb} g_a(b+\lambda)\Re  A(-ib) \, db.
\end{align*}
By Proposition~\ref{prop:Fga} together with the
Fourier inversion formula
$\int_{-\infty}^{\infty} e^{-i\lambda x}\hat g_a(x)\, dx=2\pi g_a(i\lambda)$,
\begin{align*}
\lim_{t\to\infty}\int_{-\infty}^{\infty} e^{-i\lambda x}\hat g_a(x)\, d\mu_t(x)
= 2\pi d_p g_a(\lambda) =d_p \int_{-\infty}^{\infty} e^{-i\lambda x}\hat g_a(x)\, dx.
\end{align*}
Hence, the hypothesis of Proposition~\ref{prop:Erickson} holds  with $C=d_p$.
It follows from Proposition~\ref{prop:Erickson} with $I=[0,1]$ that
\(
\zeta(t)\nu([t,t+1])=\mu_t([0,1])\to d_p,
\)
as $t\to\infty$. The conclusion follows from this together with~\eqref{eq-smalltder} and the fact that $\zeta(t)=t^{1-1/p}\ell^*(t)^{-1}$.~\end{pfof}

\section{Asymptotics of $A(u-ib)$ as $u,b\to 0$ and proof of Proposition~\ref {prop:Fga} }
\label{sec-a}

An essential ingredient for the proof of Proposition~\ref {prop:Fga} 
is Lemma~\ref{lemma-derS} below, which gives the asymptotic behaviour of $A(u-ib)$ as $u,b\to 0$.
Before its statement, we briefly explain the strategy of proof.
The key observation in~\cite{Tho16} to obtain~\eqref{eq-tailtau} (also to be  exploited here)
is that the perturbed transfer operator $\hat R(u-ib)$ associated with $\tilde F$
can be understood via a double perturbation
of the transfer operator for $F$, which we denote by $L$, perturbed with $r$ and $\phi$. For $u, b \ge 0$
and $\theta\in [-\pi,\pi)$, let
\begin{equation}
\label{eq-opppppp}
\hat L(u-ib, i\theta)v=L(e^{-(u-ib)r}e^{i\theta\phi}v).
\end{equation}

It is known and recalled below that $\hat L$ has good spectral properties in the Banach space $\B_\vartheta$ with norm $\|.\|_\vartheta$. 
Here, $\B_\vartheta$ is the space of bounded piecewise H{\"o}lder functions; $\B_\vartheta$
is  compactly
embedded in $L^\infty(\mu)$. The norm on $\B$ is defined by $\| v \|_\vartheta = |v|_\vartheta + |v|_\infty$,
where $|v|_\vartheta = \sup_{a \in\alpha} \sup_{x \neq y \in a} |v(x)-v(y)| / d_\vartheta(x,y)$,
where $d_\vartheta(x,y) = \vartheta^{s(x,y)}$ for some $\vartheta\in (0,1)$,
and $s(x,y) = \min\{ n :  F^n(x) \text{ and }  F^n(y) \text{ are in different elements of } \alpha\}$ 
is the separation time.

Under (H0)(i) and (H1), an argument similar to the one used in~\cite[Lemma 2.6]{Tho16} verifies that when viewed as an operator on the Banach space $\B_\vartheta(Y)$,
the spectral radius of $\hat L(u-ib, i\theta)$ is strictly less than $1$ for all $u\ge 0$ and for all $(b,\theta)\in B_\delta(0,0)$ for some $\delta>0$.
By (H0)(ii), the same holds for all $(b,\theta)\in \R \times [-\pi,\pi)\setminus\{(0,0)\}$.
Thus, $(I-\hat L(u-ib, i\theta))^{-1}$ is well defined for all $u\ge 0$ and for all 
$(b,\theta)\in \R \times [-\pi,\pi)\setminus\{(0,0)\}$.
By the argument of~\cite[Proof of Lemma 1.8]{Tho16}, for all $v\in \B_\vartheta$, $u\ge 0$
and $b\in \R\setminus\{0\}$,
\begin{align}
\label{eq-rel}
(I-\hat R(u-ib))^{-1}v=\frac{1}{2\pi}\int_{-\pi}^{\pi}(I-\hat L(u-ib, i\theta))^{-1}v\, d\theta.
\end{align}
In particular,  for all $u\ge 0$ and $b\in \R\setminus\{0\}$, the LHS of~\eqref{eq-rel} is well defined.
\begin{rmk}
\label{rmk-sphatr}
For use in Section~\ref{sec-verH}, we note that the spectral radius of $\hat R(u-ib)$ is strictly less than $1$; here, $\hat R$ is viewed as an operator acting on a Banach space
$\B_{\vartheta_0}$, for some $\vartheta_0$, associated with Gibbs Markov $(Y, \tilde F,\tilde\alpha,\mu)$. 
\end{rmk}
Define
\[
S(u-ib):=\int_{-\pi}^{\pi}(I-\hat L(u-ib, i\theta))^{-1}\, d\theta.
\]
Controlling the asymptotics  as $u, b\to 0$ of $S(u-ib)^{-1} 1$  
is the main step in estimating $\mu(\tau>t)$, when combined with~\eqref{eq-tauR}. In fact, as in~\cite{Tho16},
to estimate $\mu(\tau>t)$ it suffices to work with \emph{real Laplace transforms}, that is work with $b=0$ throughout.
For the purpose of estimating the \lq{small tail}\rq\ $\mu(t\le\tau\le t+1)$, here we shall
use~\eqref{eq-rel} to estimate the derivative $\frac{d}{db}\int_Y\hat R(u-ib)1\, d\mu$, 
as $u,b\to 0$ and, thus, the asymptotics of $A(u-ib), b\to 0$ as  $u,b\to 0$ (via~\eqref{eq-smt2}).

We state the precise result on the asymptotics of $A(u-ib)$ below and defer its proof to Section~\ref{subsec-sppropL}. 
Before its statement we recall the following notation: we write 
$B(x)\sim c(x)P$  for bounded operators $B(x), P$ acting on some Banach space $\B$ with 
norm $\|\, \|_{\B}$ if $\|B(x)-c(x) P\|_{\B} = o(c(x))$.
\begin{lemma}
\label{lemma-derS}
Assume (H0) and (H1). Let $\ell^*$ be as in (H1). There exists $\eps_0>0$ 
so that  the following hold for all $u, b\in B_{\eps_0}(0)$.

\begin{itemize}
\item[i)]$\|\frac{d}{db}S(u-ib)^{-1}\|_\vartheta\le C |u-ib|^{-1/p}\ell^*(1/|u-ib|)^{-1}$, for some positive constant $C$.
Also, as $b\to 0$,
$\frac{d}{db}S(-ib)^{-1}\sim i C_{p}|b|^{-1/p}\ell^*(1/|b|)^{-1}P$,
where $C_{p}$ is a complex constant (independent of $b$) with $\Re C_p>0$ and
$P$ is an operator defined by $Pv=\int_Y v\, d\mu$.

\item[ii)] For any $\eps>0$, $\|\frac{d}{db}S(u-ib)^{-1}-\frac{d}{db}S(-ib)^{-1}\|_\vartheta\le C_\eps u^{1-\eps}|u-ib|^{-1/p}\ell^*(1/|u-ib|)^{-1}$
for some positive constant $C_\eps$.

\item[iii)] For any $\eps>0$,
\(
\|\frac{d^2}{db^2}S(u-ib)^{-1}\|_\vartheta\le  C(|u-ib|^{-1/p-\eps} u^{p-2-\eps}+|u-ib|^{-1/p-1-\eps}),
\) for some positive constant $C_\eps$.
\end{itemize}
\end{lemma}

Using~\eqref{eq-rel}, we have
\[
\frac{d}{db}\int_Y \hat R(u-ib) 1\, d\mu=-\frac{1}{2\pi}\frac{d}{db}\Big(\int_Y S(u-ib)^{-1} 1\, d\mu\Big).
\]
Using the definition of $A(s)$ in~\eqref{eq-smt2} with $s=u-ib$, 
\[
A(u-ib)=-i\frac{d}{db}\Big(\int_Y S(u-ib)^{-1} 1\, d\mu\Big).
\]
This together with the first part of  Lemma~\ref{lemma-derS} i) implies that
as $u,b\to 0$,
\begin{align}
\label{eq-A1}
|A(u-ib)|\ll |u-ib|^{-1/p}\ell^*(1/|u-ib|)^{-1}.
\end{align}
Also, by the second part of Lemma~\ref{lemma-derS} i), the following holds under (H0) and (H1),
as $b\to 0$,
\begin{align}
\label{eq-A}
A(-ib)= C_p |b|^{-1/p}\ell^*(1/|b|)^{-1}(1+o(1)).
\end{align}
Moreover, by Lemma~\ref{lemma-derS} ii) and iii), for any $\eps>0$,
\begin{align}
\label{eq-a2} 
 |A(u-ib)-A(-ib)|\ll  u^{1-\eps}|u-ib|^{-1/p}\ell^*(1/|u-ib|)^{-1}
\end{align}
and
\begin{align}
\label{eq-A3} 
 |\frac{d}{db}A(u-ib)|\ll |u-ib|^{-1/p-\eps} u^{p-2-\eps}+|u-ib|^{-1/p-1-\eps}.
\end{align}

We now provide the

\begin{pfof}{Proposition~\ref {prop:Fga}} 
Given the definition of $g_a(b)$ in~\eqref{eq-Fourga}, let $\gamma_a(ib)=g_a(b)$.
In order to exploit the differentiability properties of $A(u-ib)$ (inside the  proof of Lemma~\ref{lem:I2} below)
we need an analytic version of $\gamma_a$.

It follows  from the definition that 
$\gamma_a^{+}(s):=\frac{1}{a}\left(1+\frac{is}{a}\right)$ is the analytic extension
of $\gamma_a|_{(0,a)i}$ to $\C$. Similarly, $\gamma_a^{-}(s):=\frac{1}{a}\left(1-\frac{is}{a}\right)$ is the analytic extension
of $\gamma_a|_{(-a,0)i}$ to $\C$.
With this notation, and recalling that $g_a(b) = 0$ for $|b| > a$, we have
\begin{align}
\label{eq-ipm}
&\int_{-\infty}^\infty g_a(b+\lambda) A(-ib)e^{-ibt}\, db\\
\nonumber&=\int_{-\lambda}^{a-\lambda} \gamma_a^{+}(i(b+\lambda)) A(-ib)e^{-ibt}\, db
+\int_{-a-\lambda}^{-\lambda} \gamma_a^{-}(i(b+\lambda)) A(-ib)e^{-ibt}\, db =I^{+}+I^{-}.
\end{align}
By Cauchy's theorem,
\begin{align*}
I^+ =& \int_{-\lambda}^{a-\lambda} \gamma_a^{+}(\frac1t+i(b+\lambda)) A(\frac1t-ib)e^{-ibt}\, db\\
&+\int_0^{\frac1t} \gamma_a^{+}(u) A(u+i\lambda)e^{(u+i\lambda)t}\, du
- \int_0^{\frac1t} \gamma_a^{+}(u+ia) A(u-i(a-\lambda))e^{(u-i(a-\lambda))t}\, du,
\end{align*}
and analogously,
\begin{align*}
I^-=& \int_{-a-\lambda}^{-\lambda}\gamma_a^{-}(\frac1t+i(b+\lambda)) A(\frac1t-ib)e^{-ibt}\, db\\
&-\int_0^{\frac1t} \gamma_a^{-}(u) A(u+i\lambda)e^{(u+i\lambda)t}\, du
+ \int_0^{\frac1t} \gamma_a^{+}(u+ia) A(u-i(a-\lambda))e^{(u-i(a-\lambda))t}\, du,
\end{align*}
By~\eqref{eq-A1},
$\|A(u-i(a-\lambda))\|\ll a^{-p}$. Thus, the last terms of the RHS for $I^+$ and $I^-$ are $O(t^{-1})$ because the integrand is bounded
and the integration path has length $t^{-1}$.

Also, by~\eqref{eq-A1} (with $b=\lambda$),
$\|A(u+i\lambda))\|\ll |\lambda|^{-p}$, for all $\lambda\ne 0$.
Thus, for all $\lambda\ne 0$, the middle terms of the RHS for $I^+$ and $I^-$ are $O(t^{-1})$ because the integrand is bounded
and the integration path has length $t^{-1}$.

Moreover, when $\lambda=0$, we have the desired cancellation in the middle terms of the RHS cancel
when taking the sum $I^+ + I^-$. That is, using the definition of $\gamma_a^\pm$
and again~\eqref{eq-A1} (with $b$=0),
\begin{align*}
 &\left|\int_0^{\frac1t} \gamma_a^{+}(u) A(u)e^{ut}\, du
 -\int_0^{\frac1t} \gamma_a^{-}(u) A(u)e^{ut}\, du\right|
 \le a^{-2}\int_0^{\frac1t} u|A(u)|e^{ut}\, du\\
 &\le a^{-2}\int_0^{\frac1t} u^{1-1/p}\ell^*(1/u)\, du\le Ca^{-2} t^{2-1/p-\eps},
\end{align*}
for some $C>0$ and any $\eps>0$.
Altogether,
\begin{align*}
\int_{-\infty}^\infty g_a(b+\lambda) A(-ib)e^{-ibt} &\, db =
\int_{-\lambda}^{a-\lambda} \gamma_a^{+}(\frac1t+i(b+\lambda)) A(\frac1t-ib)e^{\frac1t-ibt}\, db \\
&+\int_{-a-\lambda}^{-\lambda} \gamma_a^{-}(\frac1t+i(b+\lambda)) A(\frac1t-ib)e^{\frac1t-ibt}
+ O(t^{-1}).
\end{align*}
Next, it follows from the definition that
\begin{align}
\label{eq-gpma}
|\gamma_a^{\pm}(u+ib)-\gamma_a^{\pm}(ib)|=\frac{u}{a^2}.
\end{align} 
Therefore
\begin{align*}
 \left| \int_{-\lambda}^{a-\lambda} \gamma_a^+ (\frac1t + i(b+\lambda)) A(\frac1t - ib) \right.
 & e^{(\frac1t-ib)t} \, db \left. - \int_{-\lambda}^{a-\lambda} \gamma_a^+(i(b+\lambda)) A(\frac1t-ib) e^{(\frac1t-ib)t} \, db \right| \\
\ll &  t^{-1} \int_{-\lambda}^{a-\lambda} |A(\frac1t - ib)| \, db \\
\ll & t^{-1} \int_{-\lambda}^{a-\lambda} |\frac1t -ib|^{-\frac1p} \ell^*( 1/|\frac1t-ib|)^{-1}  \, db \ll t^{-1},
\end{align*}
and a similar estimate holds for the integral over $\gamma^-_a$.
Therefore
\begin{align*}
\int_{-\infty}^\infty g_a(b+\lambda) A(-ib)e^{-ibt} &\, db =
\int_{-\lambda}^{a-\lambda} \gamma_a^{+}(i(b+\lambda)) A(\frac1t-ib)e^{\frac1t-ibt}\, db \\
&+\int_{-a-\lambda}^{-\lambda} \gamma_a^{-}(i(b+\lambda)) A(\frac1t-ib)e^{\frac1t-ibt}
+ O(t^{-1}).
\end{align*}
At this moment, the arguments of $\gamma_a^{\pm}$ are all on the imaginary axis again, with imaginary part $\leq a$,
so we can switch back from $\gamma_a^{\pm}$ to $\gamma_a(ib)=g_a(b)$:
\begin{align*}
\int_{-\infty}^\infty g_a(b+\lambda) A(-ib)e^{-ibt} &\, db =
\int_{-a-\lambda}^{a-\lambda} \gamma_a(i(b+\lambda)) A(\frac1t-ib)e^{\frac1t-ibt}\, db 
+ O(t^{-1}).
\end{align*}
Recall that $\zeta(t)=t^{1-1/p}\ell^*(t)^{-1}$ and that we are interested in $\zeta(t) \int_{-\infty}^\infty g_a(b+\lambda) A(-ib)e^{-ibt} \, db$.
Using the previous displayed equation,
\[\zeta(t) \int_{-\infty}^\infty g_a(b+\lambda) A(-ib)e^{-ibt} \, db 
= \zeta(t) I_1(t,M) + \zeta(t) I_2(t,M) + O(t^{1-q_0 -1/p} \ell^*(1/t)^{-1} ),
\]
for
\[
I_1(t,M) = \int_{|b| < M/t}\gamma_a(i(b+\lambda)) A(\frac1t-ib)e^{\frac1t-ibt}\, db 
\]
(which is in fact zero for large $t$  if $0 \notin [-a-\lambda, a-\lambda]$) and
\[
I_2(t,M) = \int_{-a-\lambda \le b \le a-\lambda, |b|> M/t} \gamma_a(i(b+\lambda)) A(\frac1t-ib)e^{\frac1t-ibt}\, db.
\]

The conclusion of Proposition~\ref{prop:Fga} follows from the estimates
for $I_1(t,M)$ and $I_2(t,M)$ below. 
More precisely, Lemma~\ref{lem:I1} below gives the exact term showing also that
$\lim_{t\to\infty} \zeta(t)I_1(t,M) =\lim_{t\to\infty}\zeta(t)\int_{-M/t}^{M/t} \gamma_a(i(b+\lambda)\Re A(ib)e^{-ibt}\,db$.
Taking $M=t^{1/2}$, we have $\lim_{t\to\infty} \zeta(t)I_1(t,M) =\lim_{t\to\infty}\zeta(t)\int_{-\infty}^{\infty} \gamma_a(i(b+\lambda)\Re A(ib)e^{-ibt}\,db$,
which gives the first equality in the statement.

 Lemma~\ref{lem:I2} with $M=t^{1/2}$ and $\eps < \frac{1}{8p}(p-1)^2$ shows that
$|\zeta(t)I_2(t,M)| \to 0$ 
as $t \to \infty$.
\end{pfof}

\begin{lemma} \label{lem:I1} For any $M>1$,
\[\lim_{t\to\infty}\zeta(t)I_1(t,M) =\lim_{t\to\infty}\zeta(t)\int_{-M/t}^{M/t} g_a(b+\lambda)\Re A(-ib)e^{-ibt}\,db=\pi d_p g_a(\lambda) +q(M), \]
where $d_p$ is a positive constant independent of $M$ and $q(M)\le C M^{-1/p}$, for some $C>0$.
\end{lemma}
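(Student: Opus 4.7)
My plan is a rescaling argument: I would substitute $b = y/t$ so that the integration window $|b|<M/t$ becomes the fixed compact interval $|y|<M$. This is the natural scale, since $|b|\sim 1/t$ is precisely where $A(1/t-ib)$ achieves its peak size $t^{1/p}\,\ell^*(t)^{-1}$. After rescaling,
\[
m(t)\, I_1(t,M) = \frac{m(t)}{t}\int_{|y|<M} g_a(i(y/t+\lambda))\, A\bigl((1-iy)/t\bigr)\, e^{1/t-iy}\, dy,
\]
and I aim to pass to the pointwise $t\to\infty$ limit under dominated convergence.

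For the pointwise limit, I invoke~\eqref{eq-A} at $s=(1-iy)/t\to 0$. Since $\Re(1-iy)=1>0$, the principal branch is valid and~\eqref{eq-A} gives $A((1-iy)/t) \sim C_p\, t^{1/p}(1-iy)^{-1/p}\,\ell^*(t)^{-1}$, uniformly on $|y|<M$ (the replacement $\ell^*(t/|1-iy|)\sim\ell^*(t)$ uses slow variation and compactness of the $y$-range; for the shift off the imaginary axis, the derivative bound~\eqref{eq-A3} shows that $A(1/t - iy/t)$ and $A(-iy/t)$ differ only at lower order). With $g_a(i(y/t+\lambda))\to g_a(i\lambda)$, $e^{1/t-iy}\to e^{-iy}$, and the prefactor $m(t)\,t^{-1}\cdot t^{1/p}\,\ell^*(t)^{-1}$ reducing to a constant (this is the very normalisation defining $m(t)$), dominated convergence with dominant $|1-iy|^{-1/p}$ yields
\[
\lim_{t\to\infty} m(t)\, I_1(t,M) = c_p\, g_a(i\lambda)\int_{-M}^{M} (1-iy)^{-1/p}\, e^{-iy}\, dy,
\]
for an explicit constant $c_p$. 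The same rescaling applied to the $\Re A$-integral produces $2\,\Re C_p\, g_a(i\lambda)\int_0^M y^{-1/p}\cos y\,dy$, since the sine part vanishes by oddness. Both limiting integrals are then evaluated as $M\to\infty$ via Mellin / Fourier-inversion identities: one recognises $(1-iy)^{-1/p}$ as the characteristic function of a $\mathrm{Gamma}(1/p,1)$ law to compute $\int_{\R}(1-iy)^{-1/p}e^{-iy}dy = 2\pi\,e^{-1}/\Gamma(1/p)$, and uses $\int_0^\infty y^{-1/p}\cos y\,dy=\Gamma(1-1/p)\sin(\pi/(2p))$ together with the reflection formula $\Gamma(1-1/p)\Gamma(1/p)=\pi/\sin(\pi/p)$. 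After correctly accounting for the factor $e=e^{(1/t)\cdot t}$ hidden in the shifted exponential that defines $I_1$, the two explicit values coincide and yield the common constant $\pi d_p$ with $d_p>0$ depending only on $p$ and $r^*$.

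The tail bound $q(M)\le C\,M^{-1/p}$ follows from a single integration by parts against $e^{-iy}$: the boundary term at $|y|=M$ is of size $M^{-1/p}$, and the remainder integrand is of order $|y|^{-1-1/p}$, which integrates over $|y|>M$ to $O(M^{-1/p})$. The main obstacle, in my view, is twofold: ensuring uniformity of the asymptotic~\eqref{eq-A} on the shifted path $u=1/t$ for $b=y/t$ with $y$ bounded (where one needs~\eqref{eq-A3} to dominate the error term uniformly in $y$, particularly near $y=0$ where the derivative bound is weakest), and reconciling the two closed-form Mellin evaluations so they produce the same $d_p$. Once this bookkeeping is carried out, the result is essentially a direct calculation combined with Proposition~\ref{prop:inv}-type symmetry.
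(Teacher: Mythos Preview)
Your rescaling $b=y/t$ is the right scale and matches the paper's eventual change of variables, but the key step --- invoking \eqref{eq-A} at $s=(1-iy)/t$ to conclude $A((1-iy)/t)\sim C_p\,t^{1/p}(1-iy)^{-1/p}\ell^*(t)^{-1}$ --- is not justified. The asymptotic \eqref{eq-A} (equivalently Lemma~\ref{lemma-derS}(i)) is proved only along paths with $u=o(b)$; with $u=1/t$ and $b=y/t$ for $y$ in a fixed compact set one has $u/b=1/y$ bounded away from zero, so the hypothesis fails. If you trace the proof of Sublemma~\ref{subl-mainterm}, the asymptotic of $S(u-ib)$ and its $b$-derivative along a general ray $u-ib=re^{i\phi}$ involves the direction-dependent integrals $\int_\R(1+e^{-i\phi}\sign(\sigma)|\sigma|^p/r^*)^{-m}\,d\sigma$, so the prefactor genuinely depends on $\phi$; it does not collapse to the fixed constant $C_p$ times a complex power $(1-iy)^{-1/p}$. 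Your fallback --- using \eqref{eq-A3} to shift from $u=1/t$ back to $u=0$ --- also does not close the gap: integrating the second term $|u-ib|^{-1/p-1-\eps}$ of \eqref{eq-A3} in $u$ over $[0,1/t]$ at $b=y/t$ produces a contribution of size $t^{1/p+\eps}$ (up to slowly varying factors), which is not $o(t^{1/p})$ and hence not lower order than the putative main term. Consequently neither your limiting integrand nor the Gamma/Mellin identification that follows from it is available.

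The paper avoids this by a different order of operations. It first freezes $g_a(i(b+\lambda))$ at the constant $g_a(i\lambda)$ using the Lipschitz bound for $g_a$ together with the uniform upper bound \eqref{eq-A1} (which, unlike \eqref{eq-A}, holds for all $u\ge 0$). It then works with $A(-ib)$ on the imaginary axis, where the $u=0$ case of \eqref{eq-A} holds with the single constant $C_p$, substitutes $A(-ib)=C_p|b|^{-1/p}\ell^*(1/|b|)^{-1}(1+o(1))$, and only then rescales. Even/odd symmetry of the integrand immediately reduces the answer to $2\Re(C_p)\int_0^M b^{-1/p}\cos b\,db$, so the identification of $d_p$ is direct and no matching of two separate closed-form evaluations is needed; the tail bound $q(M)=O(M^{-1/p})$ is then a single integration by parts in $\int_M^\infty b^{-1/p}\cos b\,db$.
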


\begin{proof}
Throughout this proof we use the same notation as in the proof of Proposition~\ref{prop:Fga}.
It follows from the definition of $\gamma_a$ that $|\gamma_a(ib_1)-\gamma_a(ib_2)|\le a^{-2}|b_1-b_2|$.
Hence
\begin{align}
\label{eq-onemore3}
\nonumber \Big|I_1(t,M)-g_a(i\lambda) I_1^{\pm}(t,M)\Big| & \le \int_{-M/t}^{M/t} |\gamma_a(i(b+\lambda))-\gamma_a(i\lambda)|\,|A(1/t-ib)|\,db \\
& \le 2a^{-2}M t^{-1}\int_0^{M/t}|A(1/t-ib)|\,db.
\end{align}
By \eqref{eq-A}, there exists $\delta>0$ such that for  all $t>M/\delta$,
 \begin{align}
\label{eq-onemore4}
\int_0^{M/t}|A(1/t-ib)|\,db \le \int_0^{M/t}b ^{-1/p} \ell^*(1/b) \,db\le 1.
\end{align}
Next, write
\begin{align*}
\lim_{t\to\infty}\zeta(t)I_1(t,M) &=\gamma_a(i\lambda)\lim_{t\to\infty}\zeta(t) \int_{-M/t}^{M/t} \Re A(-ib) e^{-ibt}\,db\\
&+\gamma_a(i\lambda)\lim_{t\to\infty}\zeta(t) \int_{-M/t}^{M/t} \Re\left(A(1/t-ib)-A(-ib) \right)e^{-ibt}\,db\\
&=\gamma_a(i\lambda)\lim_{t\to\infty}\zeta(t)D_1(t)+\gamma_a(i\lambda)\lim_{t\to\infty}\zeta(t)D_2(t).
\end{align*}
 By equation~\eqref{eq-a2},
\(
|\zeta(t)D_2(t)|\ll t^{-(1-\eps)}\zeta(t)=o(1).
\)

It remains to estimate $\gamma_a(i\lambda)\lim_{t\to\infty}\zeta(t)D_1(t)$.
Using equation~\eqref{eq-A} we have that $A(-ib)= C_p |b|^{-1/p}\ell^*(1/|b|)^{-1}(1+o(1))$,
 where  $C_p$ is a complex constant. Hence, 
\begin{align}
\label{eq-onemore1}
\nonumber\gamma_a(i\lambda)\lim_{t\to\infty}\zeta(t)D_1(t)
\nonumber & =\gamma_a(i\lambda)\lim_{t\to\infty}\zeta(t) \int_{-M/t}^{M/t} \Re A(-ib) e^{-ibt}\,db\\
\nonumber &=\gamma_a(i\lambda)\lim_{t\to\infty}\zeta(t) \int_{-M/t}^{M/t}\Re(C_p) |b|^{-1/p}\ell^*(1/|b|)^{-1}e^{-ibt}(1+o(1))\,db\\
&=2\gamma_a(i\lambda) \Re(C_p)\int_{0}^{M/t} b^{-1/p}\ell^*(1/b)^{-1}\cos tb\,db.
\end{align}
 By Lemma~\ref{lemma-derS} i), $\Re(C_p)>0$. Set $d_0:=2\Re(C_p)$. With a change of variables,
\begin{align*}
2\Re(C_p)\int_{0}^{M/t} & b^{-1/p}\ell^*(1/b)^{-1}(1+o(1))\cos tb\,db \\
&=d_0t^{-(1-1/p)}\int_{0}^{M} b^{-1/p}\ell^*(t/b)^{-1}(1+o(1))\cos b\,db.
\end{align*}
Thus,
\begin{align}
\label{eq-l1mt}
\lim_{t\to\infty}\zeta(t) D_1(M,t)
&=d_0\lim_{t\to\infty}\int_{0}^{M} b^{-1/p}\frac{\ell^*(t)}{\ell^*(t/b)}(1+o(1)) \cos b\,db \nonumber \\
&=d_0\int_{0}^{M} b^{-1/p}\cos b\,db,
\end{align}
where in the last equality we have used that $\ell^*$ is slowly varying (see, for instance,~\cite{BGT}) 
together with the dominated convergence theorem.

To conclude we just need to estimate $\int_{0}^{M} b^{-1/p}\cos b\,db$ in~\eqref{eq-l1mt}. Write
\begin{align*}
\int_{0}^{M} b^{-1/p}\cos b\,db=\int_{0}^{\infty} b^{-1/p}\cos b\,db-\int_{M}^\infty  b^{-1/p}\cos b\,db
\end{align*}
 and note that
$
|\int_{M}^\infty  b^{-1/p}\cos b\,db|\le M^{-1/p}
$.
Thus,
\begin{equation*}
\gamma_a(i\lambda)\lim_{t\to\infty}m(t) L_1(M,t)=\gamma_a(i\lambda)\frac{d_0}{\pi}\int_{0}^{\infty} b^{-1/p}\cos b\,db:=d_p,
\end{equation*}
as desired.~\end{proof}
 
\begin{lemma} \label{lem:I2} For any $1<M$ and $M/t<a$, there exists $C, C', C''>0$ such that for any $\eps<(p-1)/2$,
\[
|\zeta(t)I_2(t,M)| \le C t^{-\frac1p (p-1)^2 + \eps}  +C't^{-1/p}\ell^*(t)^{-1}+ 2C'' t^{\eps} M^{-1/p+\eps} \ell^*(t)^{-1}.
\]
\end{lemma}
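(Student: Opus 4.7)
The plan is to exploit the oscillation of $e^{-ibt}$ in $I_2(t,M)$ via integration by parts, which gains a factor of $1/t$. Setting $D_t = [-a-\lambda,a-\lambda] \cap \{|b|>M/t\}$ and viewing the integrand as $h(b)e^{-ibt}$ with $h(b) = g_a(i(b+\lambda))A(\frac{1}{t}-ib)e^{1/t}$, integration by parts against $e^{-ibt} = -\frac{1}{it}\frac{d}{db}e^{-ibt}$ produces a boundary contribution from $\partial D_t$ plus an interior integral $\frac{1}{it}\int_{D_t} h'(b)e^{-ibt}\,db$, which I further split by the product rule. The target bound then arises as the sum of three contributions: one from the boundary, one from the $g_a'\cdot A$ interior piece, and one from the $g_a\cdot A'$ interior piece.

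For the boundary, the endpoints $\pm a-\lambda$ give zero because $g_a(\pm ia)=0$ by \eqref{eq-Fourga}; only the cutoff points $\pm M/t$ contribute, and only when $0\in(-a-\lambda,a-\lambda)$. At these points \eqref{eq-A1} gives $|A(\frac{1}{t}\mp iM/t)| \ll (M/t)^{-1/p}\ell^*(t/M)^{-1}$, so each boundary contribution is $\ll t^{-1+1/p}M^{-1/p}\ell^*(t/M)^{-1}$. Multiplying by $m(t)=t^{1-1/p}\ell^*(t)^{-1}$ and using Potter's bound to convert $\ell^*(t/M)^{-1}$ to $M^\eps\ell^*(t)^{-1}$ (and then $\ell^*(t)^{-1}\ll t^\eps$) absorbs this into the third term $2C''t^\eps M^{-1/p+\eps}\ell^*(t)^{-1}$ of the target.

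For the interior, the $g_a'$-piece uses $|g_a'|\le a^{-2}$ together with \eqref{eq-A1}: since $p>1$ makes $b^{-1/p}$ integrable on $(0,a)$, Karamata's theorem yields $\int_{M/t}^{a}|A(\frac{1}{t}-ib)|\,db = O(1)$, so after the $1/t$ prefactor and multiplication by $m(t)$ this contributes $C't^{-1/p}\ell^*(t)^{-1}$. For the $g_a\cdot A'$-piece, I substitute \eqref{eq-A3} with $u=1/t$ (so $u^{p-2-\eps}=t^{2-p+\eps}$) and split into two sub-integrals. The $|1/t-ib|^{-1/p-\eps}$ sub-integral is $O(1)$ on $(0,a)$ for $\eps<(p-1)/p$ and contributes $m(t)\cdot t^{-1}\cdot t^{2-p+\eps} = t^{-(p-1)^2/p+\eps}\ell^*(t)^{-1}$, which, after absorbing $\ell^*(t)^{-1}$ into $t^\eps$, matches the first term $Ct^{-(p-1)^2/p+\eps}$. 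The $|1/t-ib|^{-1/p-1-\eps}$ sub-integral, via $\int_{M/t}^{a}b^{-1/p-1-\eps}\,db \ll (M/t)^{-1/p-\eps}$, contributes $m(t)\cdot t^{-1}\cdot t^{1/p+\eps}M^{-1/p-\eps} = t^\eps M^{-1/p-\eps}\ell^*(t)^{-1}$, again part of the third term (since $M\ge 1$ gives $M^{-1/p-\eps}\le M^{-1/p+\eps}$).

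The main obstacle I expect is the careful bookkeeping of slowly varying factors: each invocation of \eqref{eq-A1} or \eqref{eq-A3} brings in $\ell^*(1/|1/t-ib|)^{-1}$ rather than $\ell^*(t)^{-1}$, and when these are combined with $m(t)$ one can end up with $\ell^*(t/M)^{-1}$ or even $\ell^*(t)^{-2}$ residual factors that must be collapsed to a single $\ell^*(t)^{-1}$ at the price of $M^\eps$ and $t^\eps$, via Potter's bounds and the fact that $\ell^*(t)^{-1}\ll t^\eps$. This is the source of the $t^\eps$ in the asserted bound and forces the restriction $\eps<(p-1)/2$, which also ensures $1/p+\eps<1$ and keeps $p-2-\eps$ negative so that all the relevant integrals converge.
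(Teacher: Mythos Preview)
Your proposal is correct and follows essentially the same route as the paper's proof: integration by parts in $b$ to gain a factor $t^{-1}$, then splitting the result into boundary terms, a $g_a'\cdot A$ integral (the paper's $J_1$), and a $g_a\cdot A'$ integral (the paper's $J_2$), with the latter further split according to the two summands in \eqref{eq-A3}. Your observation that the outer boundary terms at $b=\pm a-\lambda$ vanish exactly (since $g_a(\pm ia)=0$) is slightly sharper than the paper's remark that they are ``not larger in absolute value'' than the $b=\pm M/t$ terms, and your bookkeeping of the slowly varying factors via Potter's bounds matches the paper's (somewhat implicit) handling of the same issue.
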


\begin{proof} Compute that
\begin{align*}
I_2(t,M) &=\frac{1}{it}\int_{-a-\lambda \leq b \leq -\lambda,\ |b|>M/t}( e^{-itb})'\gamma_a^\pm(i(b+\lambda)) A(1/t-ib)\, db.
\end{align*}
Integration by parts gives four constant terms and two integrals
\[
J_1(t, M) =\int_{-a-\lambda \leq b \leq a-\lambda,\ |b|>M/t} e^{-itb}\frac{d}{db}(\gamma_a^\pm (i(b+\lambda)) A(1/t-ib)\, db
\]
and
\[
J_2(t, M) =\int_{-a-\lambda \leq b \leq a-\lambda,\ |b|>M/t} e^{-itb}\gamma_a^\pm(i(b+\lambda))\frac{d}{db}A(1/t-ib)\, db.
\]
Of the four constant terms it suffices to look at $b = M/t$, because the other three are not larger in
absolute value. It follows from the boundedness of $\gamma_a$
and equation \eqref{eq-A} that for all $M/t \le a$ and some $C>0$,
\begin{align*}
\zeta(t)t^{-1}|(\gamma_a^\pm(i(M/t+\lambda)) A(1/t-iM/t)|&\ll m(t)t^{-1}  |A(1/t-iM/t)|\\
&\ll t^{1-1/p} \ell^*(t)^{-1} t^{-1} (t/M)^{1/p} \ell^*(t/M)^{-1}\\
&\le C'' t^\eps M^{-1/p}.
\end{align*}
Next, since $\gamma_a^\pm$ has a bounded derivative on $[-a,a]$, there is some $C'>0$ such that
\begin{align*}
\zeta(t)t^{-1}|J_1(t,M)|\ll t^{1/p} \ell^*(t)^{-1}  \int_{-a-\lambda \leq b \leq a-\lambda,\ |b|>M/t} |A(1/t-ib)|\, db
\le C't^{-1/p}\ell^*(t)^{-1}.
\end{align*}
Finally, using equation~\eqref{eq-A3}, 
\begin{align*}
|J_2(t, M)| \ll &\int_{-a-\lambda \leq b \leq a-\lambda,\ |b|>M/t} b^{-(1+1/p+\eps)}\, db \\
&+ t^{2-p+\eps}\int_{-a-\lambda \leq b \leq a-\lambda,\ |b|>M/t}  b^{-(1/p+\eps)}\, db\\
 =&\ J_2^1(t, M)+J_2^2(t, M).
\end{align*}
For the first term, compute that for any $\eps>0$,
\begin{align*}
\zeta(t)t^{-1}|J_2^1(t, M)| & \ll \zeta(t)t^{-1 +1/p+\eps} \int_{-t(a+\lambda) \leq \sigma \leq t(a-\lambda),\ |\sigma|>M} 
\sigma^{-(1+1/p-\eps)}\, d\sigma \\
&\le C'' t^{\eps} M^{-\frac1p+\eps} \ell^*(t)^{-1}.
\end{align*}
For the second term, there exist $C>0$ such that for any $\eps>0$,
\begin{align*}
\zeta(t)t^{-1}|J_2^2(t, M)|& \ll t^{2-p+\eps-1/p}\int_{-a-\lambda \leq b \leq a-\lambda,\ |b|>M/t}  b^{-(1/p-\eps)}\, db 
\le C  t^{-\frac1p (p-1)^2 + \eps},
\end{align*}
which ends the proof.~\end{proof}

\section{Asymptotics of $\hat L(u-ib,i\theta)$}
\label{subsec-sppropL}

We recall the main steps and estimates the operator $\hat L(u-ib, i\theta)$ introduced in equation~\eqref{eq-opppppp}, to be used in Section~\ref{sec-pflemmader}  below.

For $u\ge 0$, $(b,\theta)\in R\times [-\pi,\pi)$ and $v\in L^1(\mu)$, we write
\[
\hat L(u-ib, 0)v= L(e^{-(u-ib) r}v),\quad \hat L(0, i\theta)v= L(e^{i\theta \phi}v).
\]
We first consider the smoothness of $\hat L(u-ib,i\theta)$.
Under the assumption that $F$ is Gibbs Markov and $r$ satisfies (H0) and (H1), the argument of~\cite[Proposition 12.1]{MelTer17} shows that
for all $u\ge 0$,
\begin{align}
\label{eq-invtrL}
\| \frac{d}{db}\hat L(u-ib, 0)\|_\vartheta<\infty.
\end{align}
Moreover, the argument for derivatives used in~\cite[Proof of Proposition 12.1]{MelTer17} 
shows that for all $u>0$,
\(
\| \frac{d^2}{db^2}\hat L(u-ib, 0)\|_\vartheta\ll \int_Y r^2e^{-ur}\, d\mu.
\)
Here we note that the argument of~\cite[Proof of Proposition 12.1]{MelTer17} immediately applies since under (H0), $r$ is bounded below and trivially satisfies~\cite[Assumption (A1)]{MelTer17}, which is
crucially used in~\cite[Proof of Proposition 12.1]{MelTer17}.

Further, let $G(x)=\mu(r<x)$. By (H1) and Potter's bounds (see~\cite{BGT}), for all $u>0$ and for any $\eps>0$,
\begin{align*}
\int_Y r^2e^{-ur}\, d\mu &=-\int_0^\infty x^2 e^{-ux} d(1-G(x))\\
&= 2\int_0^\infty e^{-ux} x^{1-p}\ell(x)\, dx-u\int_0^\infty e^{-ux} x^{2-p}\ell(x)\, dx +O\left(\int_0^\infty e^{-ux} x^{-(\gamma-2)}\right)\\
&\ll u^{p-2-\eps}\int_0^\infty e^{-\sigma} \sigma^{1-p+\eps}\, d\sigma+u^{p-2-\eps}\int_0^\infty e^{-\sigma} \sigma^{2-p+\eps}\, d\sigma\ll u^{p-2-\eps}.
\end{align*}
Hence,
\begin{align}
\label{eq-invtrL2}
\| \frac{d^2}{db^2}\hat L(u-ib, 0)\|_\vartheta\ll u^{p-2-\eps}.
\end{align}

By~\eqref{eq-invtrL}, for all $u\ge 0$, $\hat L(u-ib)$ is continuous as a function of $b$. That is, for all $h>0$,
\begin{align*}
\|\hat L(u-i(b+h), 0) -\hat L(u-ib, 0)\|_\vartheta\ll h.
\end{align*}
By an argument similar to the one above (working with the perturbation $e^{i\theta\phi}$
instead of $e^{ibr}$ and exploiting $\phi\in L^1$) or by the argument  used in~\cite[Proof of Lemma 2.2, item 3]{Tho16},
we have that for all $h>0$,
\begin{align*}
\|\hat L(0,i(\theta+h)) -\hat L(0,i\theta)\|_\vartheta\ll h.
\end{align*}
Putting the previous two displayed estimates together,  we have that for all $u\ge 0$ and  for all $h_1, h_2>0$,

\begin{equation}
\label{eq-contd}
\begin{array}{l}
\|\hat L(u-i(b+h_1),i(\theta+h_2)) -\hat L(u-ib,i\theta)\|_\vartheta\ll h_1+h_2,\\
\|\hat L(u-ib, i\theta) -\hat L(0, 0)\|_\vartheta\ll |u-ib|+|\theta|.
\end{array}
\end{equation}

We already know that $L$ has a simple isolated eigenvalue at $1$ (as an operator on $\B_\vartheta$). This together with the
above continuity properties for $\hat L(u-ib, i\theta)$ implies that that there exists $\delta>0$
and a continuous family of simple eigenvalues $\lambda(u-ib, i\theta)$ for $0\le u\le \delta$ and $(b,\theta)\in B_{\delta}(0,0)$ 
with $\lambda(0,0)=1$. 

Also, the arguments in~\cite[Proof of Lemma 2.6]{Tho16} carry over, ensuring that
the spectral radius of $\hat L(ib, i\theta)$ viewed as an operator on $\B_\vartheta$ is strictly less than $1$ for all $u\ge 0$ 
and all $(b,\theta)\in \R \times [\pi,\pi)\setminus\{(0,0)\}$.

\begin{rmk}
\label{rmk-proj}
With these specified we note that the estimates in~\eqref{eq-invtrL}--\eqref{eq-contd}
also hold for the family of eigenprojections $P(u-ib,\theta)$, $u\ge 0,b\in\R$, $\theta\in [-\pi,\pi)$ associated with the family of eigenvalues $\lambda(u-ib,i\theta)$.
\end{rmk}

\section{Proof of Lemma~\ref{lemma-derS}}
\label{sec-pflemmader}

In this section we prove Lemma~\ref{lemma-derS} via three sublemmas. 

\begin{sublemma} 
\label{subl-hatLder}
Assume (H0) and (H1). Then for all $u\ge 0$, $b\in\R$ and $\theta\in [-\pi,\pi)$, 
and for any $\eps>0$, 
\[
\|\frac{d}{db}\hat L(u-ib, i\theta)\|_\vartheta\ll 1, \quad \| \frac{d}{db}\hat L(u-ib, i\theta)-\frac{d}{db}\hat L(-ib,i\theta)\|_\vartheta \ll u^{1-\eps}
\]
and 
\(
\|\frac{d^2}{db^2}\hat L(u-ib, i\theta)\|_\vartheta\ll u^{p-2-\eps}.
\)
Moreover, the same estimates hold for the family of eigenprojections $P(u-ib,\theta)$.
\end{sublemma}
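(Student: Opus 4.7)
The plan is to exploit the factorization
\[
\hat L(u-ib, i\theta) = \hat L(u-ib) \circ M_\theta,
\]
where $M_\theta v := e^{i\theta\phi} v$ is the multiplication operator. Since $M_\theta$ depends on neither $u$ nor $b$, derivatives in $b$ act only on the left factor, giving
\[
\frac{d^k}{db^k} \hat L(u-ib, i\theta) = \Bigl(\frac{d^k}{db^k} \hat L(u-ib)\Bigr) M_\theta, \qquad k = 1, 2.
\]
Thus the desired estimates will follow from \eqref{eq-invtrL} and \eqref{eq-invtrL2}, once one shows that $M_\theta$ is a bounded operator on $\B_\vartheta$ with norm controlled uniformly in $\theta$.

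The first technical step is therefore to check that $\|M_\theta\|_\vartheta \le 1$ uniformly in $\theta$. Since $|e^{i\theta\phi}|\equiv 1$, the sup-norm part is immediate. For the H\"older seminorm, the key observation is that $\phi$ is $\alpha$-measurable by (H0)(i) and that $\tilde\alpha$ refines $\alpha$, so $e^{i\theta\phi}$ is constant on every element of $\tilde\alpha$. Hence for $x,y$ in the same $a \in \tilde\alpha$, $(M_\theta v)(x) - (M_\theta v)(y) = e^{i\theta\phi(x)}(v(x) - v(y))$, giving $|M_\theta v|_\vartheta \le |v|_\vartheta$. Composing with the factorization produces the two stated bounds on the derivatives of $\hat L(u-ib, i\theta)$.

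For the eigenprojections, I would use the Cauchy integral representation
\[
P(u-ib, \theta) = \frac{1}{2\pi i}\oint_\Gamma (z - \hat L(u-ib, i\theta))^{-1}\, dz,
\]
where $\Gamma$ is a small circle around $\lambda(u-ib, i\theta)$ on which the resolvent is uniformly bounded (this follows from the continuity estimate \eqref{eq-contd} and the fact that $\lambda(0,0) = 1$ is a simple isolated eigenvalue of $L$, together with the strict spectral radius bound recalled at the end of Section~\ref{subsec-sppropL} outside a neighborhood of the origin). Differentiating under the integral via $\frac{d}{db}(z-A)^{-1} = (z-A)^{-1}\bigl(\frac{dA}{db}\bigr)(z-A)^{-1}$ reduces the estimate on $\frac{d^k}{db^k} P(u-ib, \theta)$ to the bounds just obtained for $\hat L(u-ib, i\theta)$. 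For $k=2$, the product rule produces an additional term involving two first derivatives of $\hat L$; this contributes $O(1)$, which is dominated by $u^{p-2-\eps}$ as $u\to 0$, so the claimed bound survives.

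The main obstacle is arranging the Cauchy contour $\Gamma$ so that the resolvent bound is uniform across all parameter values $(u,b,\theta)$ under consideration; patching the small-parameter regime (where $\lambda(u-ib, i\theta)$ is close to $1$) with the regime where the spectral radius is strictly less than $1$ should be handled by choosing a fixed $\Gamma$ encircling an annulus that contains $\lambda(u-ib, i\theta)$ for small parameters and lies outside the spectrum for the remaining parameters. Everything else reduces to the two ingredients already available: the factorization through $M_\theta$ and the Gibbs--Markov estimates \eqref{eq-invtrL}, \eqref{eq-invtrL2}.
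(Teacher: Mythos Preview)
Your proposal is correct and follows essentially the same approach as the paper: the key observation in both is that $e^{i\theta\phi}$ is constant on partition elements (since $\phi$ is $\alpha$-measurable), so the presence of the $\theta$-twist does not affect the H\"older estimates, and the bounds \eqref{eq-invtrL} and \eqref{eq-invtrL2} carry over unchanged. Your presentation is slightly more explicit---you package the observation as the factorization $\hat L(u-ib,i\theta)=\hat L(u-ib)\circ M_\theta$ with $\|M_\theta\|_\vartheta\le 1$, and you spell out the Cauchy-integral argument for the eigenprojections---whereas the paper simply cites the constancy of $e^{i\theta\phi}$ on partition elements and refers back to the argument for \eqref{eq-invtrL}, \eqref{eq-invtrL2}; but the underlying idea is identical.
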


\begin{proof} Since $e^{i\theta\phi}$ is constant on partition elements, the conclusion follows by 
the argument recalled (namely~\cite[Proposition 12.1]{MelTer17})
in obtaining~\eqref {eq-invtrL} and~\eqref {eq-invtrL2}.~\end{proof}

Recall that $\lambda(u-ib,i\theta)$ is well defined for  $0\le u\le \delta$ and $(b,\theta)\in B_{\delta}(0,0)$.
The next result gives the asymptotics of the first two derivatives
of $\lambda(u-ib,i\theta)$ in $b$; inside the proof
we also give another verification of \eqref{eq-eignddouble}.
 
\begin{sublemma}
\label{subl-double} Assume (H0) and (H1). Then  as $u,b\to 0$ and as $\theta\to 0$, 
\begin{align}
\label{eq-eignddouble}
1-\lambda(u-ib,i\theta)=(u-ib)r^* +c_p\ell_p(1/|\theta|)|\theta|^p+ o(|u-ib|+\ell(1/|\theta|)|\theta|^p),
\end{align}
where $r^* = \int_Y r\, d\mu$, $c_p$ is a positive constant and
i) if $p\in (1,2)$, $\ell_p=\ell$ with $\ell$ as in (H1);
ii) if $p=2$, $\ell_p(y)=2\int_1^y\frac{\ell(x)}{x}\, dx$.

Also, $\frac{d}{db}\lambda(u-ib,i\theta)= -ir^*(1+o(1))$.
Moreover, for all $u>0$ 
and $(b,\theta)\in B_{\delta}(0,0)$ and any $\eps>0$,
$|\frac{d^2}{db^2}\lambda(u-ib,i\theta)|\ll u^{p-2-\eps}$.
\end{sublemma}
\begin{proof} 
The asymptotic in~\eqref{eq-eignddouble} for $u>0, b=0$ is contained
in~\cite[Proof of Lemma 2.4]{Tho16}. Since we are interested in $b\ne 0$, we provide a proof below.
 
Let $v(u-ib,i\theta)$ be the eigenfunction associated with $\lambda(u-ib,i\theta)$, normalised such that $\mu(v(u-ib,i\theta))=1$.
Put $\Psi_r(u-ib)=\int_Y (1-e^{-(u-ib)r})\, d\mu$,  $\Psi_\phi(\theta)=\int_Y (1-e^{i\theta\phi})\, d\mu$ and  $ \Psi_{r,\phi}(u-ib, i\theta)=\int_Y (1-e^{-(u-ib)r})(1-e^{i\theta\phi})\, d\mu$.
Via a standard calculation (see, for instance,~\cite[Proof of Lemma 2.4]{Tho16}),
\begin{align}
\label{eq-lambdaubnonder}
1-\lambda(u-ib,i\theta) &=\Psi_r(u-ib)+\Psi_\phi (\theta)
+\Psi_{r,\phi}(u-ib, i\theta)+V(u-ib,i\theta),
\end{align}
where $V(u-ib,i\theta)=\int_Y (\hat L(u-ib, i\theta)-\hat L(0,0)) (v(u-ib, i\theta)-v(0,0))\, d\mu$. 

By (H1) and the argument used inside~\cite[Proof of Lemma 2.4]{MT13} (working with with $\beta\in (1,2]$ there)
we obtain that as $u,b\to 0$,
\begin{align}
\label{eq-lambdaubnonder11}
\Psi(u-ib)=(u-ib)r^*+\ell(u-ib)(u-ib)^p(1+o(1)).
\end{align}
Alternatively, this follows by the argument used inside~\cite[Proof of Lemma A1]{GM} 
(with $t$ there replaced by $u-ib$).

Under (H1),~\cite[Theorem 5.1]{AaronsonDenker01} ensures that for $p\in (1,2)$,
\begin{align*}
\label{eq-lambdaubnonder1}
\Psi_\phi (\theta)=c_p\ell_p(1/|\theta|)|\theta|^p (1+o(1)).
\end{align*}
If $p\in (1,2)$, $\ell_p=\ell$ with $\ell$ as in (H1) and $c_p=2\Gamma(p-1)\cos(\pi p/2)>0$, then there is no exact term  containing just $\theta$
because  $\phi$ is symmetric; in the notation  of~\cite[Theorem 5.1]{AaronsonDenker01}, the symmetry of $\phi$ gives $c_1=c_2$, $\beta=0$, $\gamma=0$,
which in turn implies the previous displayed formula.
If $p=2$,  $\ell_p=2\int_1^y\frac{\ell(x)}{x}\, dx$ with $\ell$ as in (H1), then $c_p=1/2$ by~\cite[Theorem 3.1]{ADb}.

Next, we estimate  $\Psi_{r,\phi}((u-ib),\theta)$. First, compute that for any $\eps\in (0,1)$,
\begin{align*}
|\Psi_{r,\phi}(u-ib,i\theta)| &\ll |\theta| \int_Y |\phi| |1-e^{-(u-ib)r}|\, d\mu\\
&\ll |\theta|\,\|\phi\|_{L^{2-\eps}(\mu)} \Big( \int_Y  |1-e^{-(u-ib)r}|^{(2-\eps)/(1-\eps)}\, d\mu\Big)^{(1-\eps)/(2-\eps)}\\
&\ll |\theta|\, |u-ib|^{1-\eps}\ll  |\theta|^{p+\eps}+ |u-ib|^{\frac{(1-\eps)(p+\eps)}{p+\eps-1}},
\end{align*}
where we have used Young's inequality and that $ \frac{(1-\eps)(p+\eps)}{p+\eps-1}>1$. 
Hence, $|\Psi_{r,\phi}(u-ib,i\theta)| = o(|u-ib|+|\theta|^p)= o(|\Psi_r(u-ib)+\Psi_\phi (\theta)|)$. 
Finally, by~\eqref{eq-contd}, $|V(u-ib,i\theta)|\ll (|u-ib|+|\theta|)^2$. These together with~\eqref{eq-lambdaubnonder} imply~\eqref{eq-eignddouble}.

For the second statement on the derivative, compute that for $m=1,2$,
\begin{equation}\label{eq-lambdaubder}
\frac{d^m}{db^m}\lambda(u-ib,i\theta)=-\frac{d^m}{db^m}\Psi_r((u-ib)+\frac{d^m}{db^m}\Psi_{r,\phi}(u-ib, i\theta)+O(|\frac{d^m}{db^m}V(u-ib, i\theta)|).
\end{equation}

Next, by  (H1) and the argument used  inside~\cite[Proof of Proposition 4.1]{Terhesiu16} 
(working with $\beta\in (1,2]$ there), we obtain that as $u,b\to 0$,
\[
\frac{d}{db}\Psi_r(u-ib)=-ir^*+\ell(u-ib)(u-ib)^{p-1}(1+o(1)).
\]

Recall that for all $x>0$ and $\gamma\in (0,1)$, $|1-e^{ix}|\le x^\gamma$. Note that under (H1), $|\phi|, r\in L^{p'}$ for any $1<p'<p$.
 Hence, for  $q=(1-1/p')^{-1}$, $p'<p$,
\[
\Big|\frac{d}{db}\Psi_{r,\phi}(u-ib, i\theta)\Big|\ll 
\theta^{1/q}\int_Y r |\phi|^{1/q}\, d\mu\ll \theta^{1/q}.
\] 
Thus, as $u, b,\theta\to 0$,
\[
\Big(-\frac{d}{db}\Psi_r(u-ib)+\frac{d}{db}\Psi_{r,\phi}(u-ib, i\theta)\Big) = -ir^* (1+o(1)).
\]
So far, we estimated the first two terms in the RHS of \eqref{eq-lambdaubder} (with $m=1$).
To complete the proof that $\frac{d}{db}\lambda(u-ib,i\theta)\to - ir^*$ as $u,b\to 0$,
we estimate the third term. Compute
\begin{align*}
\frac{d}{db}V(u-ib,i\theta)&=\int_Y (\frac{d}{db}\hat L(u-ib, i\theta))(v(u-ib, i\theta)-v(0,0))\, d\mu\\
&+\int_Y (\hat L(u-ib, i\theta)-\hat L(0,0)) \frac{d}{db}v(u-ib, i\theta))\, d\mu.
\end{align*}
By standard perturbation theory,  the estimates for $\hat L(u-ib,i\theta)$ carry over to the family of eigenfunctions $v(u-ib,i\theta)$.
By Sublemma~\ref{subl-hatLder} (estimates on the first derivative) and~\eqref{eq-contd}:
\begin{align*}
|\frac{d}{db}V(u-ib,i\theta)|&\ll\| \frac{d}{db}\hat L(u-ib, i\theta))\|_\infty\|\hat L(u-ib, i\theta)-\hat L(0,0)\|_\infty\\
&\ll \|\frac{d}{db}\hat L(u-ib, i\theta))\|_\vartheta\|\hat L(u-ib, i\theta)-\hat L(0,0)\|_\vartheta\ll  |u-ib|+|\theta|.
\end{align*}

We continue with the estimate on the second derivative. 
By the calculation used for deriving~\eqref {eq-invtrL2}, for $u>0$ and for any $\eps>0$,
\begin{align}
\label{eq-ppp}
|\frac{d^2}{db^2}\Psi_r(u-ib)|\ll \int_Y r^2e^{-ur}\, d\mu\ll u^{p-2-\eps}.
\end{align}
Also, $\Big|\frac{d^m}{db^m}\Psi_{r,\phi}(u-ib, i\theta)|\le  \int_Y r^m e^{-ur}|1-e^{i\theta\phi}|\, d\mu$
and similarly to~\eqref{eq-ppp},
\[
\Big|\frac{d^2}{db^2}\Psi_{r,\phi}(u-ib, i\theta)\Big|\ll u^{p-2-\eps}.
\]
Using Sublemma~\ref{subl-hatLder} 
(the estimates on the second derivatives) we compute that
\begin{align*}
|\frac{d^2}{db^2}V(u-ib,i\theta)|\ll \Big(\|\frac{d}{db}\hat L(u-ib, i\theta))\|_\vartheta\Big)^2
+\|\frac{d^2}{db^2}\hat L(u-ib, i\theta))\|_\vartheta\ll u^{p-2-\eps}.
\end{align*}
The statement on the derivatives of $\lambda$ follow by putting all the above estimates together and using~\eqref{eq-lambdaubder}.~\end{proof}

The final required estimate is

\begin{sublemma} \label{subl-mainterm} 
There exists $\eps_0>0$ so that the following hold for all $u,b\in B_{\eps_0}$.
\begin{itemize}
\item[i)] There exist positive constants $C,\tilde C$ so that $C\le |u-ib|^{1-\frac 1p}(\ell^*(1/|u-ib|))^{-1}\|S(u-ib)\|_\vartheta\le \tilde C$.
Also, there exists a complex constant $C_0$
with $\Re C_0> 0$ so that \(S(-ib)\sim i C_0 \sign(b)|b|^{\frac 1p-1}\ell^*(1/|b|)P\) as $b\to 0$.
\item[ii)] There exist a positive constant $C$ so that $\|\frac{d}{db}S(u-ib)\|_\vartheta\le  C|u-ib|^{\frac 1p-2}$. 
Also,  there exists a complex constant $C_1$
with $\Re C_1> 0$ so that
$\frac{d}{db}S(u-ib)\sim i C_1|b|^{\frac 1p-2}\ell^*(1/|b|)P$ as $b\to 0$.
\item[iii)] For any $\eps>0$,  $\|\frac{d}{db}S(u-ib)-\frac{d}{db}S(-ib)\|_\vartheta\le C_\eps u^{1-\eps}|u-ib|^{\frac 1p-2}$, for some $C_\eps>0$.
\item[iv)]For any $\eps>0$,
$\|\frac{d^2}{db^2}S(u-ib)\|_\vartheta\le C_\eps(|u-ib|^{\frac 1p-2-\eps}u^{p-2-\eps}+ |u-ib|^{\frac 1p-3-\eps})$, for some $C_\eps>0$.
\end{itemize}
\end{sublemma}

\begin{proof}Throughout this proof we let $Pv:=P(0,0)v=\int_Y v\,d\mu$
be the spectral projection associated with the eigenvalue $\lambda(0,0)=1$.

Although item i) follows by the argument in~\cite[Proof of Proposition 2.7]{Tho16}, we sketch the argument partly to fix the notation required for the proof of ii), partly because~\cite[Proof of Proposition 2.7]{Tho16} works with $s\in\R$ as opposed to $u-ib\in\C$ here.
As explained in Section~\ref{subsec-sppropL},  $\hat L(u-ib, i\theta): \B_\vartheta\to \B_\vartheta$ has good spectral properties. In particular, there exists $\delta>0$ such that for all $ u\in [0,\delta)$
and for all $(b,\theta)\in B_\delta(0,0)$ we can write
\begin{align}
\label{eq-decomp}
\nonumber (I-\hat L(u-ib, i\theta))^{-1}&=(1-\lambda(u-ib,i\theta))^{-1}P\\
\nonumber &+(1-\lambda(u-ib,i\theta))^{-1}(P(u-ib,i\theta)-P)\\
&+(I-\hat L(u-ib, i\theta))^{-1}Q(u-ib, i\theta),
\end{align}
where  $P(u-ib,i\theta)$ is the family of spectral projections  associated with the family
of simple eigenvalues $\lambda(u-ib,i\theta)$
and $Q=I-P$.

Since $\|(I-\hat L(u-ib, i\theta))^{-1}Q(u-ib, i\theta)\|_\vartheta\ll 1$, we have using \eqref{eq-eignddouble} and Remark~\ref{rmk-proj}, as $u, b,\theta\to 0$,
\[
 (I-\hat L(u-ib, i\theta))^{-1}=\Big((u-ib)r^*+c_p\ell_p(1/|\theta|)|\theta|^p\Big)^{-1}P(1+o(1)),
\]
where $r^* = \int_Y r\, d\mu$, $c_p$ is a positive constant and $\ell_p$ is a slowly varying function.

{\bf{Proof of i).}}
Fix $\delta$ such that~\eqref{eq-decomp} holds.
Proceeding as in~\cite[Proof of Proposition 2.7]{Tho16}, we note that

\begin{align*}
S(u-ib)&=\int_{-\delta}^{\delta}(I-\hat L(u-ib, i\theta))^{-1}\, d\theta(1+o(1)).
\end{align*}
Set $I(\theta)=c_p\, \ell_p(1/\theta)|\theta|^p$ and let 
$I^*(\theta)=\ell^*(1/|\theta|)|\theta|^{1/p}$
be the asymptotic (as $\theta \to 0$) inverse of $I$; in particular, we recall that $\ell^*$ is slowly varying.
Putting the above together,

\begin{align*}
S(u-ib)&= \int_{-\delta}^{\delta} \Big((u-ib)r^*+\, I(\theta)\Big)^{-1}P(1+o(1))\, d\theta\\
&=\frac{1}{(u-ib)r^*}\int_{-\delta}^{\delta}
\Big(1+\frac{ I(\theta)}{(u-ib)r^*}\Big)^{-1}\, d\theta P(1+o(1)).
\end{align*}
With the change of variables $\theta=\sigma I^*(|u-ib|)$,
\begin{align}
\label{eq-sned}
S(u-ib)&=\frac{I^*(|u-ib|)}{(u-ib)r^*}\int_{-\frac{\delta}{I^*(|u-ib|)}}^{\frac{\delta}{I^*(|u-ib|)}}
\Big(1+\frac{ I(I^*(|u-ib|)\sigma)}{(u-ib)r^*}\Big)^{-1}\, d\sigma P(1+o(1)).
\end{align}
Using Potter's bounds (see~\cite{BGT}) to estimate the integrand, we have for any $\delta_0>0$
\begin{align*}
|1+\frac{ I(I^*(|u-ib|)\sigma)}{(u-ib)r^*}|&=\Big|1+\frac{1}{r^*}
\frac{|u-ib|}{u-ib}\sigma^p(\ell^*(1/|u-ib|)^p\ell(1/I^*(|u-ib|)\sigma))|\Big|\\
&\ge\Big | 1+\frac{1}{r^*}\frac{|u-ib|}{u-ib}\min(|\sigma|^{p+\delta_0}, |\sigma|^{p-\delta_0})\Big|.
\end{align*}
Since $\frac{|u-ib|}{u-ib}$ has modulus $1$ for $u-ib \neq 0$, we have
\begin{align}
\label{eq-rewr}|1+\frac{I(I^*(|u-ib|)\sigma)}{(u-ib)r^*}|\ge
1-\frac{1}{r^*}\min(|\sigma|^{p+\delta_0}, |\sigma|^{p-\delta_0}).
\end{align}
Hence, the integral in~\eqref{eq-sned} is bounded and bounded away from $0$.  Also,
\begin{align}
\label{eq-rewr1}
\frac{I^*(|u-ib|)}{(u-ib)r^*} &\sim\frac{c_p^{1/p}\, |u-ib|^{1/p}}{(u-ib)r^*}\ell^*(1/|u-ib|).
\end{align}
The first part of item i) follows.

To prove the second part of item i),  note that $\frac{|-ib|}{-ib}=i\sign(b)$.
Thus, the integrand in \eqref{eq-sned} is bounded by an absolutely integrable function
and converges pointwise to $(1\pm \frac{i}{r^*}\sigma^p)^{-1}$.
Since we also know that $\frac{\delta}{I^*(|-ib|)}\to\infty$ as $b\to 0$,
 it follows from the dominated convergence theorem that 
 \begin{align}
\label{eq-limubpaths}
\nonumber \lim_{b\to 0} &\int_{-\frac{\delta}{I^*(|-ib|)}}^{\frac{\delta}{I^*(|-ib|)}}
\Big(1\pm \frac{ I(I^*(|-ib|)\sigma)}{(-ib) r^*}\Big)^{-1}\, d\sigma\\
&= \int_{-\infty}^\infty (1\pm\frac{i}{r^*}\sign(\sigma)\, |\sigma|^p)^{-1}\, d\sigma
= \int_{-\infty}^\infty
\frac{1\mp\frac{i}{r^*}\sign(\sigma)|\sigma|^p}{|1+\frac{1}{(r^*)^2}|\sigma|^{2p}|}
=:K_p,
\end{align}
where $K_p$ is a positive constant, independent of $b$. 
Finally,  taking $u=0$ in~\eqref{eq-rewr1} we have $\frac{I^*(|-ib|)}{-ib} \sim i\sign(b)$.
The second part of item i) follows with $C_0=i\sign(b)\frac{c_p^{1/p}}{r^*} K_p>0$.

{\bf{Proof of ii).}} Differentiating~\eqref{eq-decomp} in $b$,
\begin{align}
\label{eq-rewr2}
 \frac{d}{db}(I-\hat L(u &-ib, i\theta))^{-1}=\frac{ \frac{d}{db}\lambda(u-ib,i\theta)}{(1-\lambda(u-ib, i\theta))^{2}}P
 +\frac{\frac{d}{db}\lambda(u-ib,i\theta)}{(1-\lambda(u-ib, i\theta))^{2}}(P(u-ib,i\theta)-P)\\
\nonumber&+(1-\lambda(u-ib, i\theta))^{-1}\frac{d}{db} P(u-ib, i\theta)+\frac{d}{db}(I-\hat L(u-ib, i\theta))^{-1}Q(u-ib, i\theta).
\end{align}
Using Sublemma~\ref{subl-hatLder}  (which gives the same estimates for $\frac{d}{db} P(u-ib,i\theta)$)
and~\eqref{eq-contd},
\begin{align*}
 \frac{d}{db}(I-\hat L(u-ib, i\theta))^{-1}&=
 \frac{ \frac{d}{db}\lambda(u-ib,i\theta)}{(1-\lambda(u-ib, i\theta))^{2}}P(1+o(1)).
\end{align*}
Using Sublemma~\ref{subl-double} (the estimate on the first derivative) and proceeding as in the proof of item i),
as $u,b\to 0$

\begin{align}
\label{eq-rewr3}
&\frac{d}{db}S(u-ib) \sim  -i r^*\frac{\ I^*(|u-ib|)}{((u-ib)r^*)^2}
\int_{-\frac{\delta}{I^*(|u-ib|)}}^{\frac{\delta}{I^*(|u-ib|)}}
\Big(1+\frac{ I(I^*(|u-ib|)\sigma)}{(u-ib)r^*}\Big)^{-2} d\sigma\, P.
\end{align}
By~\eqref{eq-rewr}, the integral is bounded. This together with~\eqref{eq-rewr1} gives the first part of item ii).

Next, by an argument similar to the one used in obtaining~\eqref{eq-limubpaths}, 
\[
\lim_{b\to 0}\int_{-\frac{\delta}{I^*(|-ib|)}}^{\frac{\delta}{I^*(|-ib|)}}
\Big(1+\frac{ I(I^*(|-ib|)\sigma)}{(-ib)r^*}\Big)^{-2} d\sigma=
\int_{-\infty}^\infty (1\pm\frac{i \sign(\sigma) }{r^*}\sigma^p)^{-2}\, d\sigma=:K_p',
\]
where $K'_p$ is real and positive, as we will argue below. Thus,
\[
\frac{d}{db}S(-ib)\sim \frac{-i r^*\ K_p'}{(r^*)^2}\frac{|-ib|^{1/p}}{(-ib)^{2}}P
\sim \frac{i\ K_p'}{r^*}|b|^{\frac 1p-2}P,
\]
where in the last equality we have used that $\frac{|-ib|}{-ib}=i\sign(b)$. The second part of item ii) follows
with $C_1=\frac{\ K_p'}{r^*}$.

{\bf{Showing that $K_p'$ is positive.}}
 Using the change of coordinates $r^*y = \sigma^p$ we get
 \begin{eqnarray}\label{eq:Kprime}
  K'_p &=& \int_{-\infty}^{\infty} \left( 1 - \frac{i \sign(\sigma) |\sigma|^p}{r^*}  \right)^{-2} \, d\sigma
  =
   \int_{-\infty}^{\infty} \frac{ \left( 1 + \frac{i \sign(\sigma) |\sigma|^p}{r^*}  \right)^{2} }
   {\left(1+\frac{|\sigma|^{2p}}{(r^*)^2}\right)^2}\, d\sigma \nonumber \\
& = &
   \int_{-\infty}^{\infty} \frac{ 1 - \frac{|\sigma|^{2p}}{(r^*)^2} + \frac{2 i \sign(\sigma) |\sigma|^p}{r^*}  }
   {\left(1+\frac{|\sigma|^{2p}}{(r^*)^2}\right)^2}\, d\sigma 
   = 
   2\int_{0}^{\infty} \frac{ 1 - \frac{\sigma^{2p}}{(r^*)^2}}
   {\left(1+\frac{\sigma^{2p}}{(r^*)^2}\right)^2}\, d\sigma 
    \nonumber \\
   &=& \frac{2(r^*)^{\frac1p -1}}{p}
   \int_0^\infty \frac{1-y^2}{(1+y^2)^2}\, y^{\frac1p-1} \, dt.
 \end{eqnarray}
The integrand of \eqref{eq:Kprime} is positive for $y < 0$ and negative for $y> 0$.
Hence for larger values of $p$, the factor $y^{\frac1p-1}$ puts more weight on the positive part of the integrand, and hence the integral of \eqref{eq:Kprime} is increasing in $p$.
(For $p=1$, the integral can be  computed explicitly and it is $0$.)

{\bf{Proof of iii).}} This follows by a straightforward calculation using~\eqref{eq-rewr2}, the estimate $\| \frac{d}{db}\hat L(u-ib, i\theta)-\frac{d}{db}\hat L(-ib,i\theta)\|_\vartheta\ll u^{1-\eps}$
recorded in Sublemma~\ref{subl-hatLder} and an equation similar to~\eqref{eq-rewr3}.

{\bf{Proof of iv).}} Differentiating once more in~\eqref{eq-rewr2} and using Sublemma~\ref{subl-hatLder} for the estimates for the first and second derivatives of the involved operators in $b$
together with~\eqref{eq-contd} and Sublemma~\ref{subl-double} (for both, first and second derivatives)
\begin{align*}
\| \frac{d^2}{db^2}(I-&\hat L(u -ib, i\theta))^{-1}\|_\vartheta\ll \Big|\frac{(\frac{d}{db}\lambda(u-ib,i\theta))^2}{(1-\lambda(u-ib, i\theta))^{3}}\Big|
+\Big|\frac{\frac{d^2}{db^2}\lambda(u-ib,i\theta)}{(1-\lambda(u-ib, i\theta))^{2}}\Big|\\
&\ll u^{p-2-\eps}(|u-ib|+c_p\theta^p\ell(1/|\theta|))^{-2}+(|u-ib|+c_p\theta^p\ell(1/|\theta|))^{-3}.
\end{align*}
 The conclusion follows from the previous displayed equation together with arguments  similar to the ones
used at the end of proof of item i), somewhat simplified by the fact we only study upper bounds.
~\end{proof}

We can now complete the 

\begin{pfof}{Lemma~\ref{lemma-derS}}
{\bf{Proof of i).}}
Compute that $\frac{d}{db}S(u-ib)^{-1}=-S(u-ib)^{-1}\,\frac{d}{db}S(u-ib)S(u-ib)^{-1}$. 
By the first part of Sublemma~\ref{subl-mainterm} i) (on both, upper and lower bounds) and the first part of Sublemma~\ref{subl-mainterm} ii) (on upper bounds)
we have $\|\frac{d}{db}S(u-ib)^{-1}\|_\vartheta\ll |u-ib|^{\frac 1p-1}(\ell^*(1/|u-ib|))^{-1}$.

By the second part of Sublemma~\ref{subl-mainterm} i), $S(-ib)^{-1}\sim iC_0\sign(b)|b|^{\frac 1p-1}(\ell^*(1/|b|))^{-1}P$.
By the second part of Sublemma~\ref{subl-mainterm}  ii), $\frac{d}{db}S(u-ib)\sim i C_1|b|^{\frac 1p-2}\ell^*(1/|b|)P$. Thus,
\begin{align*}
\frac{d}{db}S(-ib)^{-1}\sim i \frac{C_1}{ C_0^{2}}|b|^{-\frac1p}\ell^*(1/|b|)^{-1}P.
\end{align*}
The claimed asymptotics follows with $C_p=C_1 C_0^{-2}$.

{\bf{Proof of ii).}} This follows immediately from the formula for $\frac{d}{db}S(u-ib)^{-1}$ and Sublemma~\ref{subl-mainterm} iii).

{\bf{Proof of iii).}}
Differentiating  $\frac{d}{db}S(u-ib)^{-1}$,
\begin{align*}
\frac{d^2}{db^2}S(u-ib)^{-1}& =\Big(-S(u-ib)^{-1}\,\frac{d^2}{db^2}S(u-ib)S(u-ib)^{-1}\\
& +2\Big( S(u-ib)^{-1}\frac{d}{db}S(u-ib)\Big)^{2}S(u-ib)^{-1}\Big).
\end{align*}
The upper bounds provided by Sublemma~\ref{subl-mainterm}  i), ii) and iii)  (for $u,b$ small enough)
together with a standard calculation using further Sublemma~\ref{subl-mainterm}  ii) and iv) give the second estimate of the lemma.
\end{pfof}

\section{Krickeberg mixing in an abstract set-up}
\label{sec-abstrsetup}

Generalizing  (and correcting a mistake in the proof) a result of~\cite{doney} to operator renewal sequences, Gou\"ezel
~\cite{Gouezel11} obtains the scaling rate and thus mixing for infinite measure preserving systems
with regularly varying first return tail sequences of index $\beta\in (0,1)$.
In Subsections~\ref{sec-mainest}--\ref{sec-Mg} we translate the argument of~\cite{Gouezel11}
to the  abstract class of suspensions flows described below.

Let $(Y,\mu)$ be a probability space and assume that $(Y,F,\mu)$ is ergodic measure preserving transformation.
Let $\tau : Y\to\R_{ +}$  be a measurable nonintegrable
function bounded away from zero.
Throughout, we assume that $\essinf\tau\ge1$. Define the suspension 
$Y^\tau=\{(y,u)\in Y\times\R:0\le u\le \tau(y)\}/\sim$ where $(y, \tau(y)) \sim (Fy,0)$.
The semiflow $F_t:Y^\tau\to Y^\tau$ is defined by $F_t(y,u)=(y,u+t)$ computed modulo identifications. The measure $\mu^\tau=\mu\times Leb$ is ergodic, $F_t$-invariant and $\sigma$-finite.
Since $\tau$ is nonintegrable, $\mu^\tau(Y^\tau)=\infty$.

 Given $A,B\subset Y$, define the renewal measure
\[
U_{A,B}(I)=\sum_{n=0}^\infty \mu(y\in Y:\tau_n(y)\in I,\,y\in A,\,F^ny\in B),
\]
for any interval $I\subset\R$. We write
$U_{A,B}(x)=U_{A,B}([0,x])$ for $x>0$.

Under the assumption that
$\mu(y\in Y:\tau(y)>t)=\ell(t)t^{-\beta}$ where $\beta\in(\frac12,1]$,
~\cite[Theorem 2.3]{MT17} shows that $\lim_{t\to\infty}\ell(t) t^{1-\beta}(U_{A,B}(t+h)-U_{A,B}(t))=d_{\beta}\mu(A)\mu(B)h$
where $d_\beta=\frac{1}{\pi}\sin\beta\pi$. 
As shown in~\cite[Corollary 3.1]{MT17} (see also  Corollary~\ref{cor-mixing} below), such a result translates into
mixing for the semiflow $F_t$. The argument used in~\cite[Theorem 2.3]{MT17}
adapts and generalizes~\cite[Theorem 1]{Erickson70} to the set-up of (non iid) continuous time dynamical systems. The main steps were essentially recalled in Section~\ref{sec-stratT1},
but the definition of the measure $U$ there is different and the steps in~\cite[Proof of Theorem 1]{Erickson70} are used for a different purpose.

As clarified in~\cite{MT17}, the quantity
$U_{A,B}(t+h)-U_{A,B}(t)$ for $h>0$ can be understood in terms of twisted transfer operator for the 
map $F$ (with $\tau$ being the twist), as we explain in what follows. 
Define the symmetric measure $V_{A,B}(I)=\frac{1}{2}(U_{A,B}(I)+U_{A,B}(-I))$.
Here, $U(-I)=U(\{x: -x\in I \})$. Taking $I=[0,h]$, we get
\[
\SMALL V_{A,B}(I)=\frac{1}{2}(U_{A,B}(t+h)-U_{A,B}(t)).
\] 
Let $\H=\{\Re s>0\}$ and $\barH=\{\Re s\ge0\}$.  For $s\in\H$, define
\[
\hat R(s)v=R(e^{-s\tau}v).
\]
Under suitable spectral assumptions on the map $F$ (namely, (H)(i)-(ii) below),
\[
 \hat T(s)=(I-\hat R(s))^{-1}
\]
is well defined on $\barH\setminus\{0\}$. 
Here we clarify that the results in~\cite{Gouezel11} can be used to obtain mixing for 
 suspension flows over maps with good spectral
properties and tail for the roof function satisfying: i) $\mu(\tau>t)=\ell(t)t^{-\beta}$ where $\beta\in(0,1)$; ii) $\mu(t<\tau<t+1)=O(\ell(t)t^{-(\beta+1)})$.

To spell out the analogy between assumption (H) below  and the assumptions in~\cite{Gouezel11}, we recall briefly the terminology of operator renewal sequences
introduced in~\cite{Sarig02} to obtain lower bounds for subexponentially decaying 
(finite) measure preserving systems. Let $(X,\mu)$ be a measure space (finite or infinite), and 
$f:X\to X$ a conservative 
measure preserving map.   Fix $Y\subset X$ with $\mu(Y)\in(0,\infty)$.
Let $\varphi:Y\to\Z_{+}$ be the first return time 
$\varphi(y)=\inf\{n\ge1:f^n(y)\in Y\}$ (finite almost everywhere by conservativity).  Let $L:L^1(X)\to L^1(X)$ denote the transfer operator 
 for $f$ and 
 \begin{equation*}\label{eq:TnRn}
T_n v=1_YL^n (1_Y v),\enspace n\ge0, \qquad R_n v=1_YL^n (1_{\{\varphi=n\}}v),\enspace n\ge1.
\end{equation*}
Thus $T_n$ corresponds to general returns to $Y$ and $R_n$ corresponds to first returns to $Y$.   The relationship $T_n=\sum_{j=1}^n T_{n-j}R_j=\sum_{k=0}^\infty\sum_{j_1+j_2+\ldots+j_k=n}R_{j_1}R_{j_2}\ldots R_{j_k}$
generalizes the notion of scalar renewal sequences (see~\cite{Feller66, BGT} and references therein).
Let $\hat R(z) v=\sum_n R_n z^n$, $z\in\bar\D$. It easy to check that $\hat R(1):=R$, $R:L^1(Y)\to L^1(Y)$, is
the transfer operator associated with the induced map $F=f^\varphi$ and that $\hat R(z)v=R(z^\varphi v)$.

The mixing result~\cite[Theorem 1.1]{Gouezel11} requires that i) $\mu(\varphi>n)=\ell(n)n^{-\beta}$,  $\beta\in(0,1)$; ii) $\mu(\varphi=n)=O(\ell(n)n^{-(\beta+1)})$; iii) there exists a Banach space $\B$
with norm $\|\,\|$ such that the operator $R(z)$ has the spectral gap property and that
$\|R_n\|=O(\mu(\varphi=n))$.  Assumptions i) and ii)  are also used in~\cite{doney} to obtain a strong renewal theorem for \emph{scalar} renewal sequences with infinite mean.
 There is no direct analogue of $\|R_n\|=O(\mu(\varphi=n))$ in the setting of continuous time dynamical systems; as pointed out in~\cite{MelTer17},
in the continuous time setting, the inverse Laplace transform
of the twisted transfer operator $\hat R(s)v=R(e^{-s\tau}v)$, $s\in\H$, is just a delta function.
However, as noticed in~\cite{BMT}, $\hat R(s)$ can be related to a proper Laplace transform.
More precisely, by~\cite[Proposition 4.1]{BMT}, a general proposition on twisted transfer operators 
that holds independently of the specific properties of $F$ (see also Section~\ref{app-formulaBMT} for a very short proof), for $s\in\barH$,
\begin{align}
\label{eq-RLapl}
\hat R(s) =g_0(s)\int_0^\infty R(\omega(t-\tau)) e^{-st}\, dt=:g_0(s)\int_0^\infty M(t)\, e^{-st}\, dt,
\end{align}
where $\omega:\R\to  [0, 1]$ is an integrable function with $\supp\omega \subset [-1,1]$ and $g_0$ 
is analytic on $\H$, $C^\infty$ on any compact subset of $\{ib:b\in\R\}$
such that $g_0(0)=1$.

Recall that $\barH=\{\Re s\ge0\}$ and for $\delta, L>0$ set $\barH_{\delta, L}=(\barH\cap B_\delta(0))\cup \{ib:|b|\leq L\}$.
We assume that there exists a Banach space $\B=\B(Y)\subset L^\infty(Y)$ containing constant functions, with norm $\|\,\|_{\B}$,
 such that the following assumption holds for any $L\in (0,\infty)$ and some $\delta>0$:
\begin{itemize}
\item[\textbf{(H)}]
\begin{itemize}
\item[(i)]  The operator $\hat R : \B \to\B$ has a simple eigenvalue at $1$ and the rest of the
spectrum is contained in a disk of radius less than $1$.
\item[(ii)] The spectral radius of $\hat R(s):\B\to\B$ is less than $1$ for $s\in\barH_{\delta, L}\setminus\{0\}$.

\item[(iii)] There exists an $\omega$ satisfying \eqref{eq-RLapl}  such that
$\|M(t)\|_{\B}=O(t^{-(\beta+1)}\ell(t))$.
\end{itemize}
\end{itemize}
The assumption  $\B\subset L^\infty(Y)$ can be relaxed, it is only used for simplicity.

Assumption (H)(iii) is a natural analogue of the assumption $\|R_n\|=O(n^{-(\beta+1)})$ considered in~\cite{Gouezel11}. The present result reads as
\begin{thm}\label{thm-main_smallbeta} Assume $\mu(\tau>t)=\ell(t)t^{-\beta}$ where $\beta\in(0,1)$ with $\essinf\tau\ge1$. 
Suppose that (H) holds. Let $A,B\subset Y$ be measurable and suppose that
$1_A\in\B$.  Then for any $h>0$, 
\[
\lim_{t\to\infty}\ell(t) t^{1-\beta}(U_{A,B}(t+h)-U_{A,B}(t))=d_{\beta}\mu(A)\mu(B)h,
\]
where $d_\beta=\frac{1}{\pi}\sin\beta\pi$.
\end{thm}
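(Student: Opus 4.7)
The plan is to translate Gouezel's SRT for discrete operator renewal sequences to the continuous-time setting by combining an Erickson-style Fourier inversion with a careful analysis of $\hat T(s)=(I-\hat R(s))^{-1}$ near its singularity at $s=0$. First, for $\Re s>0$, iterating the identity $\hat R(s)^n v = R^n(e^{-s\tau_n}v)$ and using transfer operator duality gives
\[
\int_0^\infty e^{-st}\, dU_{A,B}(t) = \int_Y \hat T(s) 1_A\cdot 1_B\, d\mu.
\]
Define $V_{A,B}(I)=\tfrac12(U_{A,B}(I)+U_{A,B}(-I))$ and $V_{A,B}(x)=V_{A,B}([0,x])$. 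Repeating the argument of Proposition~\ref{prop:inv}, with $A(s)$ replaced by the $\B$-valued quantity $\int_Y \hat T(s) 1_A\cdot 1_B\, d\mu$ (the $u\to 0$ limit being justified just as in Lemma~\ref{emma-remi2} using (H)(ii) and (H)(iii)), yields, for any $g$ with $\hat g(x)=O(x^{-2})$,
\[
\int_{-\infty}^\infty e^{-i\lambda(x-t)}\hat g(x-t)\, dV_{A,B}(x) = \int_{-\infty}^\infty e^{-ibt} g(i(b+\lambda))\, \Re\!\int_Y \hat T(-ib) 1_A\cdot 1_B\, d\mu\, db.
\]

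Next, I would analyse $\hat T(-ib)$ as $b\to 0$. Combining (H)(i) with the representation $\hat R(s)=g_0(s)\int_0^\infty M(t)e^{-st}\,dt$ and the decay in (H)(iii) gives continuity of $\hat R$ on $\barH$ with a $|b|^\beta\ell(1/|b|)$ modulus of continuity, so perturbation theory produces a simple eigenvalue $\lambda(s)$ of $\hat R(s)$ with $\lambda(0)=1$ and projection $P(s)\to P=\int\cdot\, d\mu$ as $s\to 0$. The tail assumption $\mu(\tau>t)=\ell(t)t^{-\beta}$ yields the Abelian asymptotic $1-\lambda(-ib)\sim c_\beta(-ib)^\beta\ell(1/|b|)$, in the spirit of Sublemma~\ref{subl-double}, so that
\[
\int_Y \hat T(-ib) 1_A\cdot 1_B\, d\mu \sim c_\beta^{-1}(-ib)^{-\beta}\ell(1/|b|)^{-1}\mu(A)\mu(B), \qquad b\to 0.
\]

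Then, mirroring the proof of Proposition~\ref{prop:Fga}, I would apply the inversion with $g=g_a$ from~\eqref{eq-Fourga} and split the $b$-integration into $|b|<M/t$, $M/t\le|b|\le\delta$, and $|b|>\delta$. After multiplying by $\ell(t)t^{1-\beta}$, the inner region produces $\pi d_\beta\mu(A)\mu(B)g_a(i\lambda)$ via the same change of variable and dominated convergence as in Lemma~\ref{lem:I1}; the outer region vanishes as $t\to\infty$ thanks to (H)(ii) and the Riemann--Lebesgue behaviour inherited from the Laplace transform of $M$; and the middle region is treated by an integration-by-parts argument as in Lemma~\ref{lem:I2}, with the derivative of $\hat T(-ib)$ controlled by (H)(iii). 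An appeal to Proposition~\ref{prop:Erickson} applied to $\mu_t=2\ell(t)t^{1-\beta}V_{A,B}(\cdot+t)$ with $I=[0,h]$ then gives
\[
\ell(t) t^{1-\beta}(U_{A,B}(t+h)-U_{A,B}(t)) = \mu_t([0,h]) \to d_\beta\mu(A)\mu(B)h.
\]

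The main obstacle is the middle range $M/t\le|b|\le\delta$, where the leading near-zero asymptotic is no longer dominant but the spectral singularity still prevents a crude bound. This is precisely the point in Gouezel's discrete SRT that requires the control $\|R_n\|=O(\mu(\varphi=n))$, and here (H)(iii) with $\|M(t)\|_\B=O(t^{-(\beta+1)})$ plays the analogous role: via $\hat R(s)=g_0(s)\tilde M(s)$, it supplies the quantitative derivative estimate for $\hat R(-ib)$, and hence for $\hat T(-ib)-c_\beta^{-1}(-ib)^{-\beta}\ell(1/|b|)^{-1}P$, needed to absorb the oscillation $e^{-ibt}$ by integration by parts and to drive the middle-region contribution to $o(\ell(t)^{-1}t^{-(1-\beta)})$ after letting $M\to\infty$.
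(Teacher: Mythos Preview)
Your plan diverges from the paper's proof in a way that leaves a genuine gap for $\beta\le\tfrac12$, which is precisely the range the theorem is meant to cover beyond \cite[Theorem~2.3]{MT17}.

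The issue is the middle region $M/t\le|b|\le\delta$. You propose to integrate by parts as in Lemma~\ref{lem:I2} and to control $\frac{d}{db}\hat T(1/t-ib)$ using (H)(iii). But (H)(iii) gives only $\|M(x)\|_\B=O(x^{-(\beta+1)})$, hence $\|\frac{d}{db}\hat R(u-ib)\|_\B\ll\int_0^\infty x^{-\beta}e^{-ux}\,dx\ll u^{\beta-1}$; it does \emph{not} give the pointwise bound $|u-ib|^{\beta-1}$ (you cannot exploit oscillation of $e^{ibx}$ against $xM(x)$ without smoothness of $M$, which is not assumed). With $u=1/t$ this yields $\|\frac{d}{db}\hat T(1/t-ib)\|_\B\ll t^{1-\beta}\,|1/t-ib|^{-2\beta}\ell(1/|1/t-ib|)^{-2}$, so the middle-region term after integration by parts satisfies
\[
\ell(t)t^{1-\beta}\cdot\frac{1}{t}\int_{M/t}^{\delta} t^{1-\beta}\,b^{-2\beta}\ell(1/b)^{-2}\,db
\;\asymp\;
\begin{cases}
\ell(t)\,M^{1-2\beta},&\beta>\tfrac12,\\[2pt]
\ell(t)\,t^{1-2\beta},&\beta<\tfrac12,
\end{cases}
\]
up to slowly varying factors. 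This is $o(1)$ as $M\to\infty$ only when $\beta>\tfrac12$; for $\beta<\tfrac12$ it blows up with $t$. The analogy with Lemma~\ref{lem:I2} breaks because there the object $A(s)$ already carries one $b$-derivative of $\hat R$ (via~\eqref{eq-smt2}) and the underlying operator $\hat L$ is $C^1$ in $b$ since $r\in L^1$; in the abstract setting $\tau\notin L^1$ and $\hat R(-ib)$ is only $\beta$-H\"older (Lemma~\ref{lem-continf}).

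The paper therefore does \emph{not} split in the Fourier variable $b$. It follows Gou\"ezel and splits in the renewal index $k$: writing $\hat T(ib)=\sum_{k\ge0}\hat R(ib)^k$, the piece $\sum_{k:\,t<Ka_k}$ produces the main term $d_\beta\mu(A)\mu(B)$ via an LLT-type argument, while the piece $\sum_{k:\,t\ge Ka_k}$ is shown to be $o(t^{-(1-\beta)}\ell(t))$ by unpacking $\hat R(ib)^k=g_0(ib)^k\hat M(ib)^k$ into a $k$-fold convolution of $M$ and controlling each ``bad configuration'' set $T_0,\dots,T_3$ separately (Propositions~\ref{prop-analGoue} and~\ref{prop-int}). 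Here (H)(iii) is used not to bound a $b$-derivative but to place $\hat M_g$ in the Banach algebra $\mathcal{\hat R}_{\beta+1}$ (Lemmas~\ref{lemma-algMpower}--\ref{lemma-algMpowernontr}), which is exactly the continuous-time analogue of Gou\"ezel's hypothesis $\|R_n\|=O(n^{-(\beta+1)})$ and of his Lemmas~3.1--3.2. Your plan would need this machinery (or an equivalent substitute) to close the gap for $\beta\le\tfrac12$.
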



\begin{cor}{~\cite[Corollary 1]{MT17}}
\label{cor-mixing}
Assume the conclusion of Theorem~\ref{thm-main_smallbeta}. Let $A_1=A\times[a_1,a_2]$, $B_1=B\times[b_1,b_2]$ be measurable subsets of $\{(y,u)\in Y\times\R:0\le u\le \tau(y)\}$ 
(so $0\le a_1<a_2\le\essinf_A\tau$, $0\le b_1<b_2\le\essinf_B\tau$). Suppose that $1_A\in\B$. Then
$\lim_{t\to\infty} \ell(t) t^{1-\beta}\mu^\tau(A_1\cap F_t^{-1}B_1)  =d_\beta\mu^\tau(A_1)\mu^\tau(B_1)$.
\end{cor}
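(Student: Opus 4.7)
The plan is a direct reduction to Theorem~\ref{thm-main_smallbeta} via Fubini. First I would unfold the action of $F_t$ on a point $(y,u)\in A_1$: there is a unique $n\ge 0$ with $\tau_n(y)\le u+t<\tau_{n+1}(y)$, and then $F_t(y,u)=(F^ny,u+t-\tau_n(y))$. The condition $F_t(y,u)\in B_1$ becomes $F^ny\in B$ and $u+t-\tau_n(y)\in[b_1,b_2]$; the hypothesis $b_2\le\essinf_B\tau$ forces $u+t-\tau_n(y)<\tau(F^ny)$ a.e., so the upper constraint on $n$ is automatic, and the lower constraint $\tau_n(y)\le u+t$ is implied by $u+t-\tau_n(y)\ge b_1\ge 0$. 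Summing over $n$ and applying Tonelli yields, for $t$ large enough that $u+t-b_2>0$ on $[a_1,a_2]$,
\[
\mu^\tau(A_1\cap F_t^{-1}B_1)=\int_{a_1}^{a_2}\bigl(U_{A,B}(u+t-b_1)-U_{A,B}(u+t-b_2)\bigr)\,du.
\]

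Next, set $h=b_2-b_1$ and $s_u=u+t-b_2$. Theorem~\ref{thm-main_smallbeta} gives $\ell(s_u)s_u^{1-\beta}(U_{A,B}(s_u+h)-U_{A,B}(s_u))\to d_\beta\mu(A)\mu(B)h$ pointwise in $u$. Because $u-b_2$ is confined to the bounded set $[a_1-b_2,a_2-b_2]$, this convergence is automatically uniform in $u\in[a_1,a_2]$: any tail estimate $|\ell(s)s^{1-\beta}(U_{A,B}(s+h)-U_{A,B}(s))-d_\beta\mu(A)\mu(B)h|<\eps$ valid for $s>T$ applies simultaneously to every $s_u$ once $t>T+b_2-a_1$. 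The Uniform Convergence Theorem for slowly varying functions (see \cite{BGT}), together with $s_u/t\to 1$ uniformly in $u$, then lets me replace $\ell(s_u)s_u^{1-\beta}$ by $\ell(t)t^{1-\beta}$, producing
\[
\ell(t)t^{1-\beta}\bigl(U_{A,B}(u+t-b_1)-U_{A,B}(u+t-b_2)\bigr)\longrightarrow d_\beta\mu(A)\mu(B)(b_2-b_1)
\]
uniformly in $u\in[a_1,a_2]$.

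Finally, integrating the uniform limit over the finite interval $[a_1,a_2]$ gives
\[
\ell(t)t^{1-\beta}\mu^\tau(A_1\cap F_t^{-1}B_1)\to d_\beta\mu(A)(a_2-a_1)\mu(B)(b_2-b_1)=d_\beta\mu^\tau(A_1)\mu^\tau(B_1),
\]
which is the claim. The only mildly delicate step is the uniform replacement of $\ell(s_u)s_u^{1-\beta}$ by $\ell(t)t^{1-\beta}$; since the shift is bounded, this is the classical uniform convergence property of slowly varying functions, so no new machinery is needed beyond what is already available in the paper.
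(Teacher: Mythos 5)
Your argument is correct and is essentially the proof the paper invokes: the cited \cite[Proof of Corollary 3.1]{MT17} proceeds by exactly this unfolding of $F_t^{-1}B_1$ into the renewal measure, yielding $\mu^\tau(A_1\cap F_t^{-1}B_1)=\int_{a_1}^{a_2}\bigl(U_{A,B}(u+t-b_1)-U_{A,B}(u+t-b_2)\bigr)\,du$, followed by the uniform (bounded-shift) application of the renewal theorem and integration over $u$. The only merit of writing it out, as you do, is that it makes explicit the two small points the citation leaves implicit: that the lap number $n$ is uniquely pinned down by $b_1\ge 0$ and $b_2\le\essinf_B\tau$, and that the replacement of $\ell(s_u)s_u^{1-\beta}$ by $\ell(t)t^{1-\beta}$ uses the uniform convergence theorem for slowly varying functions.
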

The proof of {Corollary~\ref{cor-mixing} goes word for word as~\cite[Proof of Corollary 3.1]{MT17} with Theorem~\ref{thm-main_smallbeta} replacing~\cite[Theorem 2.3]{MT17}.

\subsection{Main estimates and proof of Theorem~\ref{thm-main_smallbeta}}
\label{sec-mainest}

As shown in~\cite[Proposition 2.1]{MT17}, under (H) (in fact, a much weaker form of (H)(iii) here is required there),
the following inversion formula for the measure $V_{A,B}$ (a generalization of~\cite[Inversion formula, Section 4]{Erickson70} to the non iid setting) holds all $\lambda,t\in\R$,
\begin{align}
\label{eq-inversion}
\int_{-\infty}^\infty e^{-i\lambda(x-t)}\hat g(x-t)\,dV_{A,B}(x)=\int_{-\infty}^\infty e^{-itb} g(b+\lambda)\Re  \int_B \hat T(ib)1_A\,d\mu\, db,
\end{align}
where $g:\R\to\R$ is a continuous compactly supported function
with Fourier transform
$\hat g(x)=\int_{-\infty}^\infty e^{ixb} g(b)\, db$
satisfying $|\hat g(x)|=O(x^{-2})$ as $x\to\infty$.

Under (H),  $\hat T(s)=(I-\hat R(s))^{-1}$ is well defined for all $s\in\barH_{\delta, L}$, $\delta>0$, $L\in (0,\infty)$.
Continuing from~\eqref{eq-inversion}  we write
\begin{align}
\label{eq-splitting}
\nonumber\int_{-\infty}^\infty & e^{-i\lambda(x-t)}\hat g(x-t)\,dV_{A,B}(x)=\int_{-\infty}^\infty e^{-itb} g(b+\lambda)\sum_{k:t<Ka_k}\Re  \int_B \hat R(ib)^k1_A\,d\mu\, db\\
&+\int_{-\infty}^\infty e^{-itb} g(b+\lambda)\sum_{k:t\geq Ka_k}\Re  \int_B \hat R(ib)^k1_A\,d\mu\, db=:u_1(t)+u_2(t),
\end{align}
where the sequence $a_k$ is such that $\tau_k/a_k$ satisfies the local limit theorem
and $K \ge 1$ is some fixed number to be specified at the end of the present section.
Under the assumptions of Theorem~\ref{thm-main_smallbeta} (for the map $F$ and observable
$\tau$), such a local limit theorem
is known to hold,
with $a_k$ such that $a_k^\beta=k\ell(a_k)(1+o(1))$ (see~\cite{AaronsonDenker01}). The splitting in the sum above follows the analogue
pattern in the discrete time scenario outlined in~\cite{doney, Gouezel11}. In fact, the computation for the term $u_1(t)$ defined in~\eqref{eq-splitting}
goes word for word (apart from obvious differences in notation) as in~\cite[Proof of Proposition 1.5]{Gouezel11} (see also~\cite[Remark 2.1]{Gouezel11}).
Defining $A(x)=x^{\beta}/\ell(x)$ such that $A(k)=k(1+o(1))$ we write
\begin{align*}
u_1(t)&=\int_{-\infty}^\infty e^{-itb} g(b+\lambda)\sum_{k:k>A(t/K)}\Re  \int_B \hat R(ib)^k1_A\,d\mu\, db\\
&=\int_{-\infty}^\infty e^{-itb} g(b+\lambda)\Re  \int_B \hat R(ib)^{A(t/K)}\hat T(ib)1_A\,d\mu\, db.
\end{align*}
Arguing as~\cite[Proof of Theorem 1]{MT17}(see also~\cite[Remark 2.1]{Gouezel11}),
for any $K\geq 1$,
\begin{align*}
\lim_{t\to\infty}\ell(t) t^{1-\beta}\int_{-1/t}^{1/t} e^{-itb} g(b+\lambda)\Re  \int_B \hat R(ib)^{A(t/K)}\hat T(ib)1_A\,d\mu\, db
=d_{\beta}\mu(A)\mu(B).
\end{align*}
Under (H)(i)--(iii), $\|\hat R(ib)^{A(t/K)}\|_{\B}$
decays exponentially fast for $b$ outside a neighborhood of $0$ (see, for instance,~\cite[Proof of Proposition 1.5]{Gouezel11}
and~\cite{AaronsonDenker01}), which enables us to conclude that
\begin{align}
\label{eq-ut1}
\lim_{t\to\infty}\ell(t) t^{1-\beta} u_1(t)=d_{\beta}\mu(A)\mu(B).
\end{align}
It remains to estimate the term $u_2(t)$ defined in~\eqref{eq-splitting}. In~\cite{doney, Gouezel11}, the estimate for the analogue 
of this term in the discrete time setting is the hard part of their argument. Here, we  translate their argument to the notation of the present setting.

As already mentioned, in the discrete time scenario the renewal sequence $T_n$
can be written as $T_n=\sum_{k=0}^\infty\sum_{j_1+j_2+\ldots+j_k=n}R_{j_1}R_{j_2}\ldots R_{j_k}$.
An analogue of this formula in the continuous time setting can be obtained
from~\eqref{eq-inversion} using  (H)(iii).
Here we write $\hat M(ib) = \int_0^\infty M(t) e^{ibt}\ dt$ and vectors $\multi = (t_1, \dots, t_k)$
to abbreviate multiple integrals.

\begin{align*}
&\int_{-\infty}^\infty  e^{-i\lambda(x-t)}\gamma(x-t)\,dV_{A,B}(x)=
\int_{-\infty}^\infty e^{-itb} g(b+\lambda)\sum_{k\geq 0}\Re \int_B \hat R(ib)^k1_A\,d\mu\, db\\
&=\int_{-\infty}^\infty e^{-itb} g(b+\lambda)\Re\Big( \sum_{k\geq 0}g_0(ib)^k \int_B \hat M(ib)^k1_A\,d\mu\Big)\, db\\
&=\int_{-\infty}^\infty e^{-itb} g(b+\lambda)\\
&\times \Re
\Big( \sum_{k\geq 0}g_0(ib)^k \Big(\int_0^\infty \int_B \Big(\int_{t_1+\ldots+t_k=t} M(t_1)\ldots M(t_k)\, d\multi\Big)1_A\,d\mu\Big)e^{ibt}\, dt\Big)\, db.
\end{align*}
Hence, we can write
\begin{align*}
u_2(t)
&=\int_{-\infty}^\infty e^{-itb} g(b+\lambda)\\
&\times \Re
\Big( \sum_{k:t\geq Ka_k}g_0(ib)^k \Big(\int_0^\infty\int_B \Big(\int_{t_1+\ldots+t_k=t} M(t_1)\ldots M(t_k)\, d\multi\Big)1_A\,d\mu\Big)e^{ibt}\, dt\Big)\, db.
\end{align*}

The results below gives the main estimate for handling $u_2(t)$; 
the proof is deferred to Subsection~\ref{subsec-analG}.
\begin{prop}
\label{prop-analGoue}
For $t\geq a_k$, define
\begin{align*}
u_2(t, k)
&=\int_{-\infty}^\infty e^{-itb} g(b+\lambda)\\
&\times \Re
\Big(g_0(ib)^k \int_0^\infty  \Big( \int_B \Big(\int_{t_1+\ldots+t_k=t} M(t_1)\ldots M(t_k)\, d\multi\Big)1_A\,d\mu\Big)e^{ibt}\, dt\Big)\, db.
\end{align*}
Then for every $t\geq a_k$, $|u_2(t,k)|\ll k t^{-(1+\beta)}\ell(t)$.
\end{prop}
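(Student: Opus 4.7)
The plan is to translate Gouezel's argument to the present continuous-time setting. First, by (H)(iii) and \eqref{eq-RLapl},
\[
g_0(ib)^k\int_0^\infty\!\!\Big(\int_B M^{*k}(t')\,1_A\,d\mu\Big)e^{ibt'}\,dt'=\int_B\hat R(ib)^k\,1_A\,d\mu,
\]
where $M^{*k}$ denotes the $k$-fold convolution of $M$ with itself, so that
\[
u_2(t,k)=\int_{-\infty}^{\infty}e^{-itb}\,g(b+\lambda)\,\Re\!\int_B\hat R(ib)^k\,1_A\,d\mu\,db.
\]
The task thus reduces to estimating the Fourier transform at $t$ of $b\mapsto\int_B\hat R(ib)^k 1_A\,d\mu$ in the regime $t\ge a_k$, where $a_k$ is determined by $a_k^\beta\sim k\,\ell(a_k)$.

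By (H)(ii), $\|\hat R(ib)^k\|_\B$ is exponentially small in $k$ for $|b|\ge\delta$, so the contribution of $\{|b|\ge\delta\}$ is negligible. For $|b|<\delta$, (H)(i) together with standard perturbation theory yields a spectral decomposition $\hat R(ib)=\Lambda(ib)P(ib)+N(ib)$, with simple leading eigenvalue $\Lambda(0)=1$, projection $P(0)v=\int v\,d\mu$, and $\|N(ib)^k\|_\B$ exponentially small in $k$. An Aaronson--Denker type argument, parallel to Sublemma~\ref{subl-double}, gives $1-\Lambda(ib)\sim c\,\ell(1/|b|)|b|^\beta$ as $b\to 0$, for some $c>0$.

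The leading contribution to $u_2(t,k)$ is then
\[
I_{\text{main}}(t,k)=\mu(A)\mu(B)\int_{|b|<\delta}e^{-itb}\,g(b+\lambda)\,\Lambda(ib)^k\,db,
\]
with smaller terms from $P(ib)-P(0)$ and $N(ib)^k$ handled analogously. Following Gouezel, split this integral using the two natural scales $|b|=1/t$ (from the oscillation $e^{-itb}$) and $|b|=a_k^{-1}$ (from $\Lambda(ib)^k$): for $|b|>a_k^{-1}$, $\Lambda(ib)^k$ is of order $\exp(-ck\ell(1/|b|)|b|^\beta)$, effectively exponentially small; for $|b|$ between $1/t$ and $a_k^{-1}$, two integrations by parts in $b$ produce a factor $t^{-2}$ from the oscillation together with a factor of order $k\,\ell(1/|b|)|b|^{\beta-1}$ from differentiating $\Lambda(ib)^k$; and for $|b|<1/t$ the integrand is bounded, while the interval has length $\ll 1/t$. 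Combined via Karamata's theorem, these three pieces yield the required $k\,t^{-(1+\beta)}\ell(t)$ bound.

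The main obstacle is the delicate matching at the transition scale $|b|\sim a_k^{-1}$: since $t\ge a_k$ can be either much larger than $a_k$ or comparable, the slowly varying factors have to be tracked carefully through Potter's bounds in order to consolidate into a single $\ell(t)$ uniformly in $k$. This is exactly the step where Gouezel's discrete-time proof needs its most careful bookkeeping, and the present adaptation proceeds in parallel, with sums replaced by integrals and (H)(iii) playing the role of the pointwise bound $\|R_n\|\ll n^{-(1+\beta)}\ell(n)$ used in the discrete setting.
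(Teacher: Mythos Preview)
Your approach diverges from both the paper's and, despite what you write, from Gou{\"e}zel's. The paper, following~\cite{Gouezel11}, does \emph{not} split on the Fourier side by the size of~$|b|$. It works on the real side: the simplex $\{t_1+\cdots+t_k=t\}$ is partitioned into four sets $T_0,\dots,T_3$ according to how many coordinates exceed a threshold $t_\eta\in[a_k/2,t/2]$, and each piece $I_j(t)=\int_{T_j}M_g(t_1)\cdots M_g(t_k)\,d\multi$ is bounded separately (Proposition~\ref{prop-int}). The two inputs are a uniform Banach-algebra bound $\|\hat M_g^{(z)}(ib)^k\|_{\mathcal{\hat R}_{\beta+1}}\le C$ (Lemma~\ref{lemma-algMpower}) and a pointwise convolution bound $\|M_g^{(z)}(t)^k\|_\B\le Ce^{-t/z}a_k^{-1}$ (Lemma~\ref{lemma-invlaplMpower}). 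The spectral decomposition of $\hat R(ib)$ does appear, but only inside the proof of the Banach-algebra lemma, not as the organising principle. The factor $k\ell(t)$ in the target arises on the real side from the ``one big jump'' contribution captured by $T_3$: for $t\gg a_k$, $M^{*k}(t)$ is dominated by the event that a single $t_j$ is close to~$t$, giving $\sim k\,\|M(t)\|_\B$.

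Your sketch has a concrete gap. You bound the region $|b|<1/t$ by ``integrand bounded, interval of length $\ll 1/t$'', obtaining $O(1/t)$; the boundary terms produced by your integrations by parts at $|b|=1/t$ and $|b|=a_k^{-1}$ are of the same order. But $O(1/t)$ is strictly \emph{weaker} than the required $O(k\,t^{-(1+\beta)}\ell(t))$: since $\mu(\tau>t)=\ell(t)t^{-\beta}$ is nonincreasing, $t\mapsto t^\beta/\ell(t)$ is nondecreasing, so for every $t\ge a_k$ one has $t^\beta/\ell(t)\ge a_k^\beta/\ell(a_k)\sim k$, i.e.\ $k\,t^{-(1+\beta)}\ell(t)\le C/t$, with a genuine gap when $t\gg a_k$. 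No invocation of Karamata's theorem closes this; your three pieces simply do not sum to the stated bound. One can partially repair the small-$|b|$ piece by first subtracting $g(b+\lambda)$ (whose Fourier transform is $O(t^{-2})$), leaving an integrand $g(b+\lambda)(\Lambda(ib)^k-1)=O(k|b|^\beta\ell(1/|b|))$ near~$0$; but the boundary term at $|b|=a_k^{-1}$, where $\Lambda^k-1$ is of order~$1$, still contributes $O(1/t)$ and must be cancelled against the outer region, which your outline does not address. This matching across the scale $|b|\sim a_k^{-1}$ is precisely what the real-side $T_j$ decomposition handles cleanly.
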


It follows from Proposition~\ref{prop-analGoue} that for any $\delta>0$,
\begin{align*}
|u_2(t)|\ll t^{-(1+\beta)}\ell(t)\sum_{k:t\geq Ka_k} k\ll t^{-(1+\beta)}\ell(t) A(t/K)^2&\ll t^{-(1-\beta)}\ell(t)K^{-2\beta}\frac{\ell(t)}{\ell(t/K)}\\
&\ll t^{-(1-\beta)}\ell(t)K^{-(2\beta-\delta)},
\end{align*}
where the last estimate was obtained using Potter's bounds (see, for instance,~\cite{BGT}). Since $K^{-(2\beta-\delta)}=o(1)$ as $K\to\infty$, we obtain
\(
|u_2(t)|=o( t^{-(1-\beta)}\ell(t)),
\)
which together with~\eqref{eq-ut1} concludes the proof of Theorem~\ref{thm-main_smallbeta}.

\subsection{Proof of Proposition~\ref{prop-analGoue}}
\label{subsec-analG}

Translating the strategy and estimates in~\cite{Gouezel11}, in what follows we consider separately
the contributions of different $(t_1\ldots t_k)$  to $u_2(t,k)$ depending on the size the indices $t_1\ldots t_k$, when compared to a
  truncation level $t_\eta$ defined
as follows. Write $t=w a_k$ for some $w\geq 1$
and let $t_\eta=w^\gamma a_k/2\in[a_k/2, t/2]$ for some $\gamma\in (0,1)$ (to be specified below). 
Let $T=\{(t_1,\ldots, t_k):t_1+\ldots+t_k=t\}$ be a set which is partitioned into four disjoint sets $T_j, j\in\{0,1,2,3\}$ as follows
\begin{eqnarray*}
 T_3&=&\{\multi \in T:\exists p, t_p\geq t/2\}\\
 T_2&=&\{\multi\in T:\forall p, t_p< t/2\mbox{ and }\exists u<v\mbox{ such that }t_u,t_v\geq t_\eta\}\\
T_1&=&\{\multi\in T:\forall p, t_p< t/2\mbox{ and }\exists !\, u \mbox{ such that }t_u\geq t_\eta\}\\
T_0&=&\{\multi\in T:\forall p, t_p< t_\eta\}.
\end{eqnarray*}

Recall (from text after~\eqref{eq-inversion})  that $g:\R\to\R$ is a continuous compactly supported function
and let $[-a,a]=\supp g$. Let $\chi:\R\to [0,1]$ be a $C^\infty$ function
supported in $[-a-3, a+3]$
such that $\chi\equiv 1$  on $[-a-2, a+2]$. 

Under (H)(iii), let $g_0(ib)$ be as defined in~\eqref{eq-RLapl} and set
\begin{align}
\label{eq-mg}
m_{g}(ib)=\begin{cases}\chi(b)g_0(ib), & b\in [-a-3, a+3]\\ 0, &\mbox{otherwise.}
\end{cases}
\end{align}
Because  $m_g(ib)$ is $C^\infty$ (since $g_0(ib)$ is $C^\infty$ on any compact interval), a quick computation using integration by parts shows  the inverse
Laplace transform of $m_g(ib)$, which we denote by $m_g(t)$, satisfies $|m_g(t)|=O(t^{-2})$. Moreover, by the same argument, for any $k\geq 1$, the inverse Fourier transform $m_g(t, k)$ of  $m_g(ib)^k$
is $O(t^{-2})$.

 Using~\eqref{eq-mg}, define
\begin{align}
\label{eq-tilde}
\hat M_g(ib)=\begin{cases} m_g(ib)\hat M(ib), & b\in\supp g,\\ 0, &\mbox{otherwise.}
\end{cases}
\end{align}

The proof of the result below is deferred to Subsection~\ref{sec-proofpropint} and it allows us
to complete the  proof of Proposition~\ref{prop-analGoue}.
\begin{prop}
\label{prop-int}
For any $t\geq a_k$ and every $j\in\{0,1,2,3\}$, the integrals
\begin{align*}
I_j(t)=\int_{t_1+\ldots+t_k=t;\, \multi \in T_j}M_g(t_1)\ldots M_g(t_k)\, d\multi
\end{align*}
satisfy $\|I_j(t)\|_{\B}\ll k t^{-(1+\beta)}\ell(t)$.
\end{prop}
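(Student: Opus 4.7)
The plan is to adapt the four-case partition argument of~\cite{Gouezel11} from the discrete operator renewal setting to the present continuous-time one, bounding $\|I_j(t)\|_\B$ separately for each $j \in \{0,1,2,3\}$. The common ingredients will be: (i) the pointwise bound $\|M_g(t)\|_\B \ll t^{-(1+\beta)}$ from (H)(iii), sharpened in the tail to $\|M_g(t)\|_\B \ll \ell(t) t^{-(1+\beta)}$ via the identity $\hat M_g(ib) = \chi_{\supp g}(b)\hat R(ib)$ (which follows from~\eqref{eq-mg}--\eqref{eq-tilde} and $\hat R = g_0 \hat M$) together with the tail asymptotic for $\tau$; (ii) spectral-gap control of $\hat R(ib)^k$ from (H)(i)--(ii), which renders the many-fold convolution of $M_g$ well-behaved away from the near-origin singularity; (iii) the truncation scale $t_\eta = w^\gamma a_k/2$ with $\gamma \in (0,1)$, $t = w a_k$, and $a_k^\beta \sim k \ell(a_k)$.

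For $T_3$ (exactly one $t_p \geq t/2$), a factor $k$ comes from the choice of the large index; ingredient (i) controls $\|M_g(t_p)\|_\B$ on $[t/2,t]$ at size $\ell(t)t^{-(1+\beta)}$, while the remaining $(k-1)$-fold convolution of $M_g$ over $[0,t/2]$ is controlled via (ii) to have uniformly bounded total mass in $k$, delivering
\[
\|I_3(t)\|_\B \leq k \int_{t/2}^{t} \|M_g(t_p)\|_\B \, \|M_g^{*(k-1)}(t-t_p)\|_\B \, dt_p \ll k \ell(t) t^{-(1+\beta)}.
\]
For $T_2$ and $T_1$, the combinatorial factors $\binom{k}{2}$ and $k$ from placing moderate indices (in $[t_\eta, t/2]$) are absorbed by the pointwise decay $\|M_g(t_p)\|_\B \ll \ell(t_\eta) t_\eta^{-(1+\beta)}$ at each chosen position; using the defining relation $a_k^\beta \sim k \ell(a_k)$ and $t \geq K a_k$, one converts surplus powers of $k$ into powers of $a_k$ that fit inside $k \ell(t) t^{-(1+\beta)}$. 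The small-jump contribution is governed by a local-CLT density bound for $\tau_{k-1}/a_{k-1}$, available in the abstract framework from (H)(i)--(ii), cf.~\cite{AaronsonDenker01}.

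The main obstacle is $T_0$, where all $t_i < t_\eta$ and the constraint $\sum t_i = t \geq K a_k$ places us in the moderate large-deviation regime for a sum of $k$ small jumps; direct pointwise estimates on the product of $M_g(t_i)$ cannot reach the target order. Following~\cite{Gouezel11}, I will invert back to Fourier and represent $I_0(t)$ as an oscillatory integral of $(\chi_{\supp g}(b)\hat R(ib))^k$ against a smooth cutoff enforcing $\multi \in T_0$. Combining the near-origin expansion $1 - \lambda(ib) \sim c(ib)^\beta \ell(1/|b|)$ of the leading eigenvalue (inherited from the tail of $\tau$ and (H)(i)) with the spectral-gap control from (H)(ii) away from $b=0$ will yield the target $\|I_0(t)\|_\B \ll k t^{-(1+\beta)} \ell(t)$. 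The exponent $\gamma \in (0,1)$ in the definition of $t_\eta$ is calibrated to balance oscillation against decay, in direct parallel with the discrete-time choice in~\cite{Gouezel11}.
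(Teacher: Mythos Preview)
Your overall strategy matches the paper's: partition into $T_0,\dots,T_3$ and follow~\cite{Gouezel11}. For $T_1,T_2,T_3$ your ingredients are essentially right; the ``local-CLT density bound'' you invoke is precisely Lemma~\ref{lemma-invlaplMpower} in the paper (the analogue of~\cite[Lemma~3.2]{Gouezel11}), giving $\|M_g^{(z)}(t)^k\|_\B\le C e^{-t/z}a_k^{-1}$ uniformly in $k$ for $z\ge a_k/2$. One caveat: your claimed sharpening $\|M_g(t)\|_\B\ll \ell(t)t^{-(1+\beta)}$ does not follow from (H)(iii) as stated, and your derivation via the identity $\hat M_g(ib)=1_{\supp g}(b)\hat R(ib)$ is problematic because a hard Fourier cutoff destroys pointwise decay of the inverse transform. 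In Gou\"ezel's argument the $\ell(t)$ factor enters through the relation $a_k^\beta\sim k\ell(a_k)$, not through a sharpened pointwise bound on the building block.

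The real gap is in your treatment of $T_0$. You propose to control $I_0(t)$ by ``eigenvalue expansion near $b=0$ plus spectral gap away from $0$''. This is not how the paper (or~\cite{Gouezel11}) proceeds, and on its own it does not yield the required pointwise bound on the inverse Fourier transform. The key technical input---which you do not mention---is Lemma~\ref{lemma-algMpower} (the continuous-time analogue of~\cite[Lemma~3.1]{Gouezel11}): the \emph{truncated} powers $\hat M_g^{(z)}(ib)^k$ lie in the Banach algebra $\mathcal{\hat R}_{\beta+1}$ with norm bounded uniformly in $k\ge 1$ and $z\in[a_k/2,\infty]$. Membership in $\mathcal{\hat R}_{\beta+1}$ is exactly what encodes the inverse-transform decay $t^{-(1+\beta)}$ at the operator level; the eigenvalue asymptotic $1-\lambda(ib)\sim c|b|^\beta\ell(1/|b|)$ is one ingredient in proving this lemma (via Lemma~\ref{lemma-algMpowernontr} and Wiener-type arguments in $\mathcal{\hat R}_{\beta+1}$), but by itself it is far from sufficient. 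Without the truncated Banach-algebra estimate you have no mechanism to convert control of $\lambda(ib)^k$ into the uniform-in-$k$ pointwise bound $\|I_0(t)\|_\B\ll k\,t^{-(1+\beta)}\ell(t)$. Your sketch therefore needs to incorporate Lemma~\ref{lemma-algMpower} (and its proof machinery: the algebra $\mathcal{\hat R}_{\beta+1}$, Lemma~\ref{lemma-algMpowernontr}, and the Wiener lemma of Appendix~\ref{sec-W}) in place of the vague ``oscillatory integral plus eigenvalue expansion'' step.
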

We can now complete

\begin{pfof}{Proposition~\ref{prop-analGoue}}
Note that $k\geq 1$, $u_2(t, k)$ defined in the statement of Proposition~\ref{prop-analGoue} can be written as
\begin{align*}
 u_2(t, k)
&=\int_{-\infty}^\infty e^{-itb} g(b+\lambda)\\
&\times\Re
\Big(\int_0^\infty \Big(\int_B \Big(\int_{t_1+\ldots+t_k=t} M_g(t_1)\ldots M_g(t_k)\, d\multi\Big)
1_A\,d\mu\Big)e^{ibt}\, dt\Big)\, db.
\end{align*}

By Proposition~\ref{prop-int}, for every $j\in \{0, 1, 2, 3\}$ and all $t\geq a_k$, we have 
$\|I_j(t) \|_\B=\|\int_{t_1+\ldots+t_k=t} M_g(t_1)\ldots M_g(t_k)\, d\multi\|_\B=O(k t^{-(1+\beta)}\ell(t))$. 
Since $\B\subset L^\infty(Y)$, the inverse Fourier transform of 
$\int_0^\infty \Big( \int_B  \Big(\int_{t_1+\ldots+t_k=t} M_g(t_1)\ldots M_g(t_k)\, d\multi\Big)1_A\,d\mu\Big)e^{ibt}\, dt$
is $O(k t^{-(1+\beta)}\ell(t))$.

 Recall (from text after~\eqref{eq-inversion})  that 
$\hat g(t)=\int_{-\infty}^\infty e^{itb} g(b)\,db$
satisfies $\hat g(t)=O(t^{-2})$. Taking a convolution, we obtain that for all $t\geq a_k$,
the inverse Fourier transform of $g(b+\lambda)
\Big( \int_B \Big(\int_{t_1+\ldots+t_k=t} M_g(t_1)\ldots M_g(t_k)\, d\multi\Big)1_A\,d\mu\Big)$
 is $O(k t^{-(1+\beta)}\ell(t))$. Thus, for every $t\geq a_k$, $| u_2(t, k)|=O(k t^{-(1+\beta)}\ell(t))$, as required.~\end{pfof}

\subsection{Proof of Proposition~\ref{prop-int}}
\label{sec-proofpropint}

 In this section we state two lemmas, which are the key estimates required in the proof of  Proposition~\ref{prop-int}
and are the direct analogues of~\cite[Lemmas 3.1 and 3.2]{Gouezel11}.
Throughout, $\hat M_g^{(z)}(s)=\int_0^{z} M_g(t) e^{-st} dt$ will denote a truncated version of the  Laplace transform $\hat M_g(s)$
with truncation level $z$.

Let $G:\R\to\B$ be an operator-valued function, where $\B$ is a Banach space with norm $\|\, \|_\B$. In what follows, we let $\mathcal{\hat R}$ be the non-commutative Banach algebra of continuous functions
$G:\R\to\B$ such that their Fourier transform $\hat G:\R\to\B$ lies in $L^1(\R)$, with norm $\|G\|_{\mathcal{\hat R}}=\int_{-\infty}^\infty\|\hat G(\xi)\|_{\B}\,d\xi$.
Using this, we further
let $\mathcal{\hat R}_{\beta+1}=\{G\in\mathcal{\hat R}:\sup_{\xi\in\R}\ell(|\xi|)|\xi|^{\beta+1}\|\hat G(\xi)\|_{\B}<\infty\}$ be the non-commutative Banach algebra of continuous functions with norm
$\|G\|_{{\mathcal{\hat R}_{\beta+1}}}=\int_{-\infty}^\infty\|\hat G(\xi)\|_{\B}\,d\xi+\sup_{\xi\in\R}\ell(|\xi|)|\xi|^{\beta+1}\|\hat G(\xi)\|_{\B}$.

Lemma~\ref{lemma-invlaplMpower} below guarantees that the Fourier transform $\hat M_g^{(z)}(ib)^k$, for $k\geq 1$ and $z$ large enough,
lies in the Banach algebra $\mathcal{\hat R}_{\beta+1}$; this is an analogue of \cite[Lemma 3.1]{Gouezel11}, which is the hardest estimate in the overall argument.
The proof of Lemma~\ref{lemma-algMpower} is provided in Section~\ref{sec-Mg}.

\begin{lemma}\label{lemma-algMpower}
There exists a constant $C>0$ such that $\|\hat M_g^{(z)}(ib)^k\|_{\mathcal{\hat R}_{\beta+1}}\leq C$, for all $k\geq 1$ and  $z\in [a_k/2,\infty]$.~\end{lemma}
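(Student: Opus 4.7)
The plan is to adapt Gouezel's proof of~\cite[Lemma 3.1]{Gouezel11} to the continuous-time setting, exploiting the spectral decomposition of $\hat R(ib)$ near $b = 0$, regular-variation asymptotics for $1 - \lambda(ib)$, and crucially the truncation $z \geq a_k/2$. The core difficulty is that the Banach-algebra ``spectral radius'' of $\hat M_g(ib) = \chi(b)\hat R(ib)$ equals $1$ (attained at $b = 0$), so the submultiplicative estimate $\|\hat M_g^k\|_{\mathcal{\hat R}} \leq \|\hat M_g\|_{\mathcal{\hat R}}^k$ cannot yield a uniform-in-$k$ bound; finer spectral information is essential.

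First, I would set up the spectral decomposition. Under (H)(i), standard Kato perturbation theory gives $\delta_0 > 0$ and a simple eigenvalue $\lambda(ib)$ of $\hat R(ib)$ with $\lambda(0) = 1$, spectral projection $P(ib) \to P$ (where $Pv = \int v\, d\mu$), and contracting complement $N(ib)$ with $\|N(ib)^j\|_\B \leq C\rho^j$ for some $\rho < 1$, uniformly in $|b| \leq \delta_0$. The tail $\mu(\tau > t) = \ell(t) t^{-\beta}$ yields, via an Aaronson-Denker-type Tauberian argument as in~\cite{Tho16}, the asymptotic $1 - \lambda(ib) \sim c_\beta (-ib)^\beta \ell(1/|b|)$ as $b \to 0$ with $\operatorname{Re} c_\beta > 0$. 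By (H)(ii), $\|\hat R(ib)^k\|_\B$ decays exponentially in $k$ for $b \in \operatorname{supp}\chi \setminus (-\delta_0,\delta_0)$. Using~\eqref{eq-RLapl} and $m_g(ib) = \chi(b) g_0(ib)$, we have $\hat M_g(ib) = \chi(b)\hat R(ib)$, so for $|b| \leq \delta_0$
\[
\hat M_g(ib)^k = \chi(b)^k \lambda(ib)^k P(ib) + \chi(b)^k N(ib)^k,
\]
and the second term contributes only $O(\rho^k)$ to $\|\cdot\|_{\mathcal{\hat R}_{\beta+1}}$.

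Second, for the dominant term $\chi(b)^k \lambda(ib)^k P(ib)$, I would rescale $b = \sigma/a_k$ using the defining relation $a_k^\beta \ell(a_k) \sim k$: then $\lambda(i\sigma/a_k)^k \to \exp(-c_\beta(-i\sigma)^\beta)$, the characteristic function of a $\beta$-stable distribution, so the Fourier transform scales asymptotically like $a_k^{-1} \phi_\beta(\xi/a_k)$ with $\phi_\beta$ the corresponding stable density. After change of variables the $L^1$ part of $\|\cdot\|_{\mathcal{\hat R}_{\beta+1}}$ reduces to $\|\phi_\beta\|_{L^1}$, uniform in $k$. For the $|\xi|^{\beta+1}$ part, the naive stable tail $\phi_\beta(x) \sim C|x|^{-(\beta+1)}$, when combined with spectral-gap cancellations coming from the operator structure of $M_g$, yields the correct uniform bound provided the truncation $z \geq a_k/2$ is in place: the $k$-fold convolution of $M_g \cdot 1_{[0,z]}$ is supported on $[0,kz]$, and within this support the single-big-jump mechanism (which otherwise produces the heavy $k\ell(\xi)\xi^{-(\beta+1)}$ tail that obstructs uniformity) is controlled by a Cram\'er-type large-deviation argument adapted to the operator-valued setting.

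Finally, the truncation error $\hat M_g(ib)^k - \hat M_g^{(z)}(ib)^k$ is handled separately: using $\|M_g(t)\|_\B = O(t^{-(\beta+1)})$ (a consequence of (H)(iii) and the Schwartz inverse of $\chi$ acting by smoothing), each single truncated factor contributes $O(z^{-\beta}) = O(a_k^{-\beta}) = O(\ell(a_k)/k)$, and a combinatorial sum over the choice of truncated position gives the required uniform bound. The main obstacle is the $|\xi|^{\beta+1}$-decay for the principal term, which demands careful matching between the stable scaling $a_k$, the cutoff $z \sim a_k/2$, and the large-deviation regime $\xi \gg a_k$. In~\cite{Gouezel11} this is the technical heart of the proof, combining contour deformation of the Fourier-inversion integral, integration by parts to improve $\xi$-decay, and Potter-bound estimates to handle slowly-varying factors uniformly; the continuous-time version here follows the same outline but requires additional bookkeeping for the Laplace-transform truncation and for the smoothing introduced by $\chi$.
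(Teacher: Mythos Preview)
Your outline is broadly along the right lines --- the spectral decomposition $\hat M_g(ib)^k = \chi(b)^k\lambda(ib)^k P(ib) + \chi(b)^k N(ib)^k$ is exactly what the paper uses (equation~\eqref{eq-Mg2}), and the contracting piece is indeed harmless. But your route to the $\mathcal{\hat R}_{\beta+1}$ bound on the principal term differs substantially from the paper's, and your sketch blurs two steps that the paper keeps separate.

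The paper first proves the \emph{non-truncated} bound $\|\hat M_g(ib)^k\|_{\mathcal{\hat R}_{\beta+1}}\le C$ (Lemma~\ref{lemma-algMpowernontr}) and only then passes to the truncated version by following Gou\"ezel's reduction from~\cite[Lemma~3.1]{Gouezel11} verbatim. For the non-truncated bound the key observation is that $\hat M_g(ib)$ is \emph{compactly supported} in $b$ by construction of $\chi$, so one may periodise and invoke the Banach-algebra equivalence between compactly supported elements of $\mathcal{\hat R}_{\beta+1}$ and $\hat\A_{\beta+1}$ recorded in Lemma~\ref{lem-AR}. On the periodic side, the scalar estimate $|(\lambda^*)^k|_{\A_{\beta+1}}\le C$ is literally~\cite[Lemma~4.2]{Gouezel11}; $P(ib)\in\mathcal{\hat R}_{\beta+1}$ comes from Wiener's Lemma~\ref{lem-W}. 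The global extension of $\lambda,P$ over all of $\supp\chi$ is supplied by the $\tilde R$ construction of Proposition~\ref{prop-tildeR}, a point your proposal does not address.

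Your direct approach --- rescale $b=\sigma/a_k$ to stable asymptotics, then control the $|\xi|^{\beta+1}$ weight through truncation and a large-deviation argument --- would in effect re-prove Gou\"ezel's hard analytical estimate in continuous time rather than cite it. That is doable in principle, but your sketch leaves the crucial uniform bound $\sup_\xi |\xi|^{\beta+1}\|\cdot\|$ vague: the single-big-jump heuristic produces a tail $\sim k\,\ell(\xi)\xi^{-(\beta+1)}$, and invoking the support restriction $\xi\in[0,kz]$ with $z\ge a_k/2$ does not by itself kill the factor $k$ uniformly. The paper's periodisation trick sidesteps this entirely, trading a fresh continuous-time analysis for a direct transfer of the discrete-time result already available.
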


The result below provides an estimate for the inverse Laplace transform $M_g^{(z)}(t)^k$ of  $\hat M_g^{(z)}(s)^k$, $s\in\H$ for $k\geq 1$ and $z$ large enough.

\begin{lemma}
\label{lemma-invlaplMpower}
There exists a constant $C>0$ such that for all $k\geq 1$, $z\in [a_k/2,\infty]$
and $t>0$,
\[
\| M_g^{(z)}(t)^k\|_{\B}\leq C e^{-t/z} a_k^{-1}.
\]~\end{lemma}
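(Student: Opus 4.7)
The overall plan is to introduce the decaying factor $e^{-t/z}$ via a Bromwich contour shift, and then to control the resulting $L^1$-in-$b$ integral using the spectral structure of $\hat R$ developed in Section~\ref{subsec-sppropL}.

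For the contour shift: since $M_g^{(z)}(t)=M_g(t)\mathbf 1_{[0,z]}(t)$ is compactly supported in $t$ and $M_g$ itself is Schwartz (because $b\mapsto\hat M_g(ib)$ is $C^\infty$ and compactly supported by \eqref{eq-tilde}--\eqref{eq-mg}), the function $s\mapsto\hat M_g^{(z)}(s)^k$ is entire and, by repeated integration by parts, decays faster than any polynomial as $|\Im s|\to\infty$ uniformly for $\Re s\in[-1/z,0]$. Cauchy's theorem therefore permits shifting the Bromwich contour from $\Re s=0$ to $\Re s=-1/z$, giving
\[
M_g^{(z)}(t)^k \;=\; \frac{e^{-t/z}}{2\pi}\int_{-\infty}^{\infty} e^{ibt}\,\hat M_g^{(z)}\!\bigl(-\tfrac{1}{z}+ib\bigr)^k\, db \qquad (t>0).
\]
Taking $\B$-norms inside the integral reduces the lemma to establishing the bound $J_k(z)\ll a_k^{-1}$, uniformly in $z\in[a_k/2,\infty]$, where
\[
J_k(z)\;:=\;\int_{-\infty}^{\infty}\bigl\|\hat M_g^{(z)}\bigl(-\tfrac{1}{z}+ib\bigr)^k\bigr\|_\B\, db.
\]

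To estimate $J_k(z)$, I would split the integral into the regions $|b|\le\delta_0$, $\delta_0<|b|\le L$, and $|b|>L$. Because $r\ge 1$ by (H0)(i), the operator $\hat R(s)=R(e^{-s\tau}\cdot)$ extends analytically to a left-neighborhood of the imaginary axis, and by standard perturbation theory the spectral decomposition $\hat R(s)=\lambda(s)P(s)+Q(s)$ and the expansion
\[
1-\lambda(s)\;=\;sr^\ast+c|s|^\beta\ell(1/|s|)\bigl(1+o(1)\bigr)
\]
from Sublemma~\ref{subl-double} continue to hold on the thin slit $\Re s\in[-2/a_k,0]$. On $|b|\le\delta_0$, the substitution $b=\sigma/a_k$ combined with the defining relation $a_k^\beta\ell(a_k)\sim k$ turns the leading factor into $|\lambda(-1/z+ib)|^k\sim\exp(-c|\sigma|^\beta)$, whence the small-$b$ contribution is $\asymp a_k^{-1}\int_\R e^{-c|\sigma|^\beta}\,d\sigma\ll a_k^{-1}$; the non-principal part $Q(s)$ contributes only $O((1-\eta)^k)$ per unit length in $b$ and is negligible. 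On the intermediate range, (H)(ii) gives $\|\hat M_g^{(z)}(-1/z+ib)^k\|_\B\le C\rho^k$ with $\rho<1$, so the contribution is $\ll L\rho^k=o(a_k^{-1})$. For $|b|>L$, repeated integration by parts in $\hat M_g^{(z)}(-1/z+ib)=\int_0^zM_g(t)e^{(1/z-ib)t}\,dt$ combined with the smoothness of $M_g$ yields super-polynomial decay in $b$, more than sufficient.

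The main obstacle is that the spectral estimates of Section~\ref{subsec-sppropL} and Sublemma~\ref{subl-double} are formulated on $\barH$, while the arguments above require them on the slit $\Re s\in[-2/a_k,0]$. The needed extension is nonetheless routine: since $r\ge 1$, the family $\hat L(s,i\theta)$ is operator-analytic in $s$ on a left-neighborhood of the imaginary axis, and the continuity estimates \eqref{eq-contd}, the eigenvalue expansion of Sublemma~\ref{subl-double}, and the spectral-gap statement (H)(ii) all transfer by standard Kato-style perturbation theory. Once in place, what remains is a careful local-limit-type bookkeeping mirroring the proof of Lemma~\ref{lemma-algMpower}, but now aimed at the scaled $L^1$ bound with weight $a_k^{-1}$.
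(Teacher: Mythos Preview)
Your overall strategy—Bromwich contour shift to extract the factor $e^{-t/z}$, then an $L^1(db)$ bound via spectral decomposition near $b=0$ and spectral gap away from $0$—is indeed the shape of Gou\"ezel's Lemma~3.2, to which the paper defers. However, two steps in your execution do not go through as written.

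First, the claim that $M_g$ is Schwartz because $\hat M_g(ib)$ is $C^\infty$ and compactly supported is incorrect: by \eqref{eq-tilde}, $\hat M_g(ib)=m_g(ib)\hat M(ib)$, and while $m_g$ is $C^\infty$, the factor $\hat M(ib)$ (hence $\hat M_g(ib)$) is only $C^\beta$ by Lemma~\ref{lem-continf}. Consequently $\|M_g(t)\|_\B$ decays like $t^{-(\beta+1)}$, not rapidly. This does not kill the contour shift—$\hat M_g^{(z)}(s)$ is entire in $s$ simply because $M_g\mathbf 1_{[0,z]}$ has compact support—but your ``repeated integration by parts'' justification for decay as $|b|\to\infty$ needs to be replaced by something that uses only the limited regularity actually available.

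Second, and more seriously, your plan to carry the spectral decomposition $\hat R(s)=\lambda(s)P(s)+Q(s)$ across to the slit $\Re s\in[-2/a_k,0]$ cannot work here: $\tau$ is unbounded above (indeed non-integrable), so $\hat R(s)=R(e^{-s\tau}\,\cdot\,)$ is not even defined as a bounded operator on $\B$ for $\Re s<0$. The lower bound $\tau\ge 1$ is irrelevant to this, and the appeal to $\hat L(s,i\theta)$, $r\ge 1$, and Sublemma~\ref{subl-double} imports objects from the concrete $\Z$-extension setup of Sections~\ref{subsec-sppropL}--\ref{sec-pflemmader} into the abstract framework of Section~\ref{sec-abstrsetup}, where this lemma lives and where only (H) and Lemma~\ref{lem-continf} are available. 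The correct route (and what Gou\"ezel does) is to keep the spectral input on the imaginary axis: one shows that $\hat M_g^{(z)}(-1/z+ib)$ is an $O(z^{-\beta})$ perturbation of $\hat M_g(ib)$ uniformly in $b$, by combining the truncation error $\int_z^\infty\|M_g(t)\|_\B\,dt=O(z^{-\beta})$ with the shift error $\int_0^z\|M_g(t)\|_\B(e^{t/z}-1)\,dt=O(z^{-\beta})$. Standard perturbation then transfers the eigenvalue/eigenprojection structure of $\hat M_g(ib)$—available via \eqref{eq-Mg2} and Lemma~\ref{lem-continf}—to the truncated shifted operator. With that in place, your split into $|b|\le\delta_0$ and $|b|>\delta_0$ and the local-limit rescaling $b=\sigma/a_k$ proceed essentially as you describe.
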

\begin{proof} Starting from assumption (H) and using the continuity  Lemma~\ref{lem-continf} below, the conclusion follows arguing
 word for word as in~\cite[Proof of Lemma 3.2]{Gouezel11}.~\end{proof}

\begin{pfof}{Proposition~\ref{prop-int} }The arguments for estimating $I_j(t)$, $j\in\{0,1,2,3\}$ go word for word as
the arguments used in~\cite{Gouezel11} in estimating $\sum_j$, $j\in\{0,1,2,3\}$ there with
Lemma~\ref{lemma-algMpower} replacing~\cite[Lemmas 3.1]{Gouezel11}
and Lemma~\ref{lemma-invlaplMpower} replacing~\cite[Lemma 3.2]{Gouezel11}.~\end{pfof}

\subsection{Proof of Lemma~\ref{lemma-algMpower}}
\label{sec-Mg}

Based on (H)(iii) we have the following continuity property for $\hat R$:

\begin{lemma}\label{lem-continf} 
There exists $C>0$, such that
for all $s_1,s_2\in\barH\cap \{ib:|b|\le L\}$ with $L<\infty$,
\[
\|\hat R(s_1)-\hat R(s_2)\|_{\B}\le C\, |s_1-s_2|^\beta\ell(|s_1-s_2|).
\]
\end{lemma}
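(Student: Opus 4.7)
The plan is to exploit the integral representation~\eqref{eq-RLapl},
\[
\hat R(s)=g_{0}(s)\int_{0}^{\infty}M(t)\,e^{-st}\,dt,
\]
together with the tail bound $\|M(t)\|_{\B}=O(t^{-(\beta+1)})$ from (H)(iii) and the regularity of $g_{0}$ on compact subsets of the imaginary axis. Since $s_{1},s_{2}\in\barH\cap\{ib:|b|\le L\}$ are purely imaginary with $|\Im s_{j}|\le L$, we have $|e^{-s_{j}t}|=1$, so that $\int_{0}^{\infty}\|M(t)\|_{\B}\,dt<\infty$ (the integrand is bounded on bounded sets and $\beta>0$ gives convergence at infinity), and each $\hat R(s_{j})$ is defined as a Bochner integral in $\B$.

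First I would split the difference into the two natural pieces:
\[
\hat R(s_{1})-\hat R(s_{2})=\bigl(g_{0}(s_{1})-g_{0}(s_{2})\bigr)\!\int_{0}^{\infty}\!M(t)e^{-s_{1}t}\,dt\;+\;g_{0}(s_{2})\!\int_{0}^{\infty}\!M(t)\bigl(e^{-s_{1}t}-e^{-s_{2}t}\bigr)\,dt.
\]
For the first piece, since $g_{0}$ is $C^{\infty}$ on the compact set $\{ib:|b|\le L\}$, one has $|g_{0}(s_{1})-g_{0}(s_{2})|\le C_{L}|s_{1}-s_{2}|$, and the integral is bounded in $\B$-norm by $\int_{0}^{\infty}\|M(t)\|_{\B}\,dt$, which is finite. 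Hence this piece is $O(|s_{1}-s_{2}|)$, and since $|s_{1}-s_{2}|\le(2L)^{1-\beta}|s_{1}-s_{2}|^{\beta}$ on the bounded set under consideration, it is $O(|s_{1}-s_{2}|^{\beta})$.

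For the second piece, write $\delta=|s_{1}-s_{2}|$ and split the integral at $t=1/\delta$. Using $|e^{-s_{1}t}-e^{-s_{2}t}|\le\min(2,\,\delta t)$ (a direct consequence of $|s_{j}|$ having real part $0$), the short-time contribution is dominated by
\[
\delta\int_{0}^{1/\delta}t\,\|M(t)\|_{\B}\,dt\;\ll\;\delta\Bigl(1+\int_{1}^{1/\delta}t\cdot t^{-(\beta+1)}\,dt\Bigr)\;\ll\;\delta^{\beta},
\]
while the long-time contribution is dominated by
\[
2\int_{1/\delta}^{\infty}\|M(t)\|_{\B}\,dt\;\ll\;\int_{1/\delta}^{\infty}t^{-(\beta+1)}\,dt\;\ll\;\delta^{\beta}.
\]
Together with $|g_{0}(s_{2})|$ uniformly bounded on the compact set, this yields the desired estimate. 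Combining both pieces gives $\|\hat R(s_{1})-\hat R(s_{2})\|_{\B}\le C|s_{1}-s_{2}|^{\beta}$.

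There is no serious obstacle here; the only point requiring mild care is the behaviour of $M(t)$ for small $t$, where one uses that $M(t)=R(\omega(t-\tau))$ is a bounded operator on $\B$ (with bound depending only on $\|\omega\|_{\infty}$ and $\mu$), so that the possible singularity of $t^{-(\beta+1)}$ near zero does not occur in the actual estimate of $\|M(t)\|_{\B}$ on $[0,1]$. The exponent $\beta$ (rather than $1$) in the final bound arises exclusively from the balancing of the two pieces in the split at $t=1/\delta$, which is why $\beta<1$ is exactly what is needed.
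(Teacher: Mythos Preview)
Your proof is correct and follows essentially the same route as the paper: exploit the representation $\hat R(s)=g_0(s)\hat M(s)$ from \eqref{eq-RLapl}, use $|e^{-s_1t}-e^{-s_2t}|\le\min(2,|s_1-s_2|t)$ together with $\|M(t)\|_\B=O(t^{-(\beta+1)})$, and split the $t$-integral at $1/|s_1-s_2|$ to balance the two pieces. Your treatment is in fact slightly more careful than the paper's, since you explicitly note that $\|M(t)\|_\B$ is bounded for small $t$ (so no singularity at $0$) and you write the split point correctly as $1/\delta$; the paper sets $N=|s_1-s_2|$ and splits at $N$, which appears to be a typo for $N=1/|s_1-s_2|$.
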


\begin{proof}
By (H)(iii), $\hat R(s)=g_0(s)\hat M(s)$ where $\hat M(s)=\int_0^\infty M(t) e^{-st} dt$
with $\|M(t)\|_{\B}=O(t^{-(\beta+1)})$. Let $N=|s_1-s_2|\ell(|s_1-s_2|)$. Clearly , for all $s_1,s_2\in\barH$,
\begin{align*}
\|\hat M(s_1)  -\hat M(s_2)\|_\B &\leq |s_1-s_2|\int_0^N t\|M(t)\|_\B\, dt+2 \int_N^\infty \|M(t)\|_\B\, dt\\
& \leq |s_1-s_2| N^{1-\beta}
+2N^{-\beta}\leq C |s_1-s_2|^\beta\ell(|s_1-s_2|),
\end{align*}
for some $C>0$. Now restrict to $s\in\barH$ with $|s|\le L$.
By  equation~\eqref{eq-RLapl},  $|g_0(s)|\ll 1$ and $|g_0(s_1)-g_0(s_2)|\ll |s_1-s_2|$.
The result follows.
\end{proof}

By Lemma~\ref{lem-continf}, the map $s\mapsto \hat R(s)$ is continuous.
By (H), $\hat R(0)$ has $1$ as a simple eigenvalue, so
there exists $\delta>0$ and a continuous family $\lambda(s)$ of simple
eigenvalues of $\hat R(s)$ for $s\in\barH\cap B_\delta(0)\setminus\{0\}$ with
$\lambda(0)=1$.  Let $P(s)$ denote the
corresponding family of spectral projections, given by
\begin{align}
\label{eq-P}
P(s)=\int_{|\xi-1|=\delta}(\xi-\hat R(s))^{-1}\, d\xi.
\end{align}
 For $s\in\barH\cap B_\delta(0)\setminus\{0\}$,
 write $\hat R(s)=\lambda(s)P(s)+Q(s)$,
where $Q(s)=I-P(s)$. Recall that $\hat R(s)=g_0(s)\hat M(s)$, where $g_0$ is a scalar function. 
Hence, for $k\geq 1$,
\[
\hat M(s)^k=g_0(s)^{-k}\lambda(s)^kP(s)+g_0(s)^{-k}Q(s)^k.
\]
Recalling the definition of $\hat M_g(ib)$ in~\eqref{eq-tilde} and restricting to $b\in (-\delta, \delta)$, we get
\begin{align}
\label{eq-Mg2}
\hat M_g(ib)^k=\lambda(ib)^k m_g(ib)^{k}P(ib)+m_g(ib)^{k}Q(ib)^k.
\end{align}
 
Lemma~\ref{lemma-algMpowernontr} below is a version of Lemma~\ref{lemma-algMpower} for the non-truncated Fourier transform; this is the analogue of~\cite[Lemma 4.2]{Gouezel11}.
 Given Lemma~\ref{lemma-algMpowernontr} below, the proof of Lemma~\ref{lemma-algMpower} for estimating  the truncated Fourier transform follows goes word for word as in~\cite[Proof of Lemmas 3.1]{Gouezel11}.
\begin{lemma}
\label{lemma-algMpowernontr}
There exists a constant $C>0$ such that for all $k\geq 1$,
\[
\|\hat M_g(ib)^k\|_{\mathcal{\hat R}_{\beta+1}}\leq C.
\]
~\end{lemma}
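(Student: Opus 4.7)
The plan is to follow closely the strategy of \cite[Proof of Lemma 4.2]{Gouezel11}, adapting the estimates to the continuous-time setting via the decomposition~\eqref{eq-Mg2}. First I would split the analysis into the low-frequency regime $|b| < \delta$, where the spectral decomposition is available, and the high-frequency regime $|b| \geq \delta$ (intersected with the compact support of $m_g$). On the high-frequency region, (H)(ii) together with Lemma~\ref{lem-continf} gives a uniform spectral radius bound $\|\hat R(ib)^k\|_\B \leq C \rho^k$ for some $\rho \in (0,1)$, and since $g_0$ is bounded and smooth, the same exponential decay holds for $\hat M_g(ib)^k$; in particular, this piece contributes a term with both the $L^1$ Fourier-norm and the $|\xi|^{\beta+1}$-bound summable (in fact exponentially small) in $k$, which is harmless.

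For $|b| < \delta$, I would use~\eqref{eq-Mg2} to write $\hat M_g(ib)^k = \lambda(ib)^k m_g(ib)^k P(ib) + m_g(ib)^k Q(ib)^k$. The complementary term $m_g(ib)^k Q(ib)^k$ has operator norm bounded by $c\rho_1^k$ on a neighborhood of $0$, by standard perturbation theory around the simple isolated eigenvalue $1$, so it contributes negligibly to both norms in $\|\cdot\|_{\mathcal{\hat R}_{\beta+1}}$. The main term is $\lambda(ib)^k m_g(ib)^k P(ib)$, and here I would exploit that under the tail assumption $\mu(\tau>t) = \ell(t)t^{-\beta}$ with $\beta \in (0,1)$, the analogue of Sublemma~\ref{subl-double} (proved by the argument of \cite[Lemma A.1]{GM} or \cite[Proof of Lemma 2.4]{Tho16}) gives $1 - \lambda(ib) = c\,\ell(1/|b|)|b|^\beta (1+o(1))$ (with a phase factor reflecting one-sidedness of $\tau$), together with first and second derivative estimates on $\lambda$ at $ib$.

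The heart of the argument is to control $\int \|M_g^{*k}(\xi)\|_\B\,d\xi$ and $\sup_\xi |\xi|^{\beta+1} \|M_g^{*k}(\xi)\|_\B$ uniformly in $k$, where $M_g^{*k}$ denotes the $k$-fold convolution realizing the inverse Fourier transform of $\hat M_g(ib)^k$. After the change of variable $b = \sigma/a_k$ (with $a_k$ the stable-law normalizing sequence, $a_k^\beta \sim k\ell(a_k)$), the rescaled function $\lambda(i\sigma/a_k)^k$ converges to the characteristic function $e^{-c|\sigma|^\beta}$ of a $\beta$-stable law. Plancherel-type bounds and the dominated convergence theorem then give the $L^1$ bound, while the $|\xi|^{\beta+1}$ pointwise bound follows by integrating by parts twice, using the first and second derivative estimates on $\lambda(ib)$ together with the estimate $|\lambda(ib)|^k \leq e^{-ck \ell(1/|b|)|b|^\beta}$ to ensure absolute integrability of the relevant bounds, exactly as in Gouezel's treatment.

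The main obstacle I expect is the last pointwise $|\xi|^{\beta+1}$ estimate, since it requires combining the spectral derivative bounds (the analogue of Sublemma~\ref{subl-double} in the regime $\beta \in (0,1)$) with a careful truncation argument in $b$: one splits the integration in $b$ at the scale $b \sim 1/\xi$, estimates the low-$b$ part using two integrations by parts, and the high-$b$ part using the Gaussian/stable decay of $\lambda(ib)^k$. The continuous-time analytic structure, including the factor $m_g(ib)^k$ which is $C^\infty$ of compact support, is needed here to guarantee integrability of the boundary and error terms uniformly in $k$; this is precisely the place where (H)(iii) and the identity~\eqref{eq-RLapl} enter.
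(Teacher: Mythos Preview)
Your approach shares the overall spirit with the paper (both ultimately rest on Gou\"ezel's argument), but the paper takes a different and more structural route. Instead of splitting into low and high frequency and re-deriving stable-law asymptotics by hand, the paper works entirely inside the Banach algebra $\mathcal{\hat R}_{\beta+1}$ and reduces to the discrete-time result via three devices: (i) the construction of a modified operator $\tilde R(b)$ (Proposition~\ref{prop-tildeR}) that agrees with $\hat R(ib)$ near $0$ and equals $\hat R(0)$ away from $0$, so that the spectral decomposition~\eqref{eq-Mg2} holds globally and no frequency cutoff is needed; (ii) Wiener's Lemma (Lemma~\ref{lem-W}) applied to the resolvent formula~\eqref{eq-P} together with (H)(iii) to place $P(ib)$ in $\mathcal{\hat R}_{\beta+1}$; (iii) the correspondence Lemma~\ref{lem-AR} between compactly supported elements of $\mathcal{R}_{\beta+1}$ and periodic elements of $\A_{\beta+1}$, which lets one invoke \cite[Lemma~4.2]{Gouezel11} as a black box for the uniform bound $|(\lambda^*)^k|_{\A_{\beta+1}}\le C$.

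Your proposal has a genuine gap in the treatment of the $|\xi|^{\beta+1}$ weight. You write that this bound ``follows by integrating by parts twice, using the first and second derivative estimates on $\lambda(ib)$'', but in the abstract setup of Section~\ref{sec-abstrsetup} no such derivative estimates are available: under (H)(iii) the map $b\mapsto\hat R(ib)$, and hence $b\mapsto\lambda(ib)$, is only $C^\beta$ (Lemma~\ref{lem-continf}), not $C^1$ let alone $C^2$. Sublemma~\ref{subl-double}, which you invoke, is proved in the specific $\Z$-extension model under (H0)--(H1), not under the abstract hypothesis (H); it has no analogue here. Gou\"ezel's actual argument for the uniform $\A_{\beta+1}$ bound on $\lambda^k$ does not use differentiation either---it proceeds through the algebra structure and the decay $\|R_n\|=O(n^{-(\beta+1)})$. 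Your high-frequency step has a similar issue: the bound $\|\hat M_g(ib)^k\|_\B\le C\rho^k$ on a compact $b$-set does not by itself control $\int\|\hat G(\xi)\|_\B\,d\xi$ or $\sup_\xi|\xi|^{\beta+1}\|\hat G(\xi)\|_\B$; some regularity in $b$ is required, and supplying it is exactly what the $\tilde R$ construction and the Banach-algebra framework accomplish.
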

\begin{proof}
We first assume that $\lambda(ib)$ is defined for $b\in\R$, vanishing outside the 
support of the function $g$, namely outside $[-a,a]$, $a>0$. Under this assumption, $P(ib), Q(ib)$ are also defined for $b\in\R$, vanishing outside
 outside $[-a,a]$.
 This is an analogue of the initial assumption in~\cite[Proof of Lemma 4.2]{Gouezel11}
 that the eigenvalue $\lambda(ib)$
is well defined on the whole unit circle. The general case can be dealt with as in~\cite[Proof of Lemma 4.2]{Gouezel11}, by constructing a function $\tilde R(ib)$ that coincides with $\hat R(ib)$
in a neighborhood of $0$ and it is close to $\hat R(0)$, elsewhere. The existence of such $\tilde R$ is ensured by Proposition~\ref{prop-tildeR} below.

Assuming that $\lambda(ib)$ is well defined on $[-a,a]$, we clarify that each quantity appearing in~\eqref{eq-Mg2}
lies in the  Banach algebra $\mathcal{\hat R}_{\beta+1}$.

From the text below~\eqref{eq-mg}, we know that the inverse Fourier transform of $m_g(ib)$
is $O(t^{-2})$. Next, by~\eqref{eq-P}, assumption (H)(iii) and Wiener's Lemma~\ref{lem-W}, we obtain $P(ib)\in \mathcal{\hat R}_{\beta+1}$.
Also, recall that $Q(ib)$ is an operator acting on $\B$ well defined on  $[-a,a]$ with
spectrum contained in a ball of radius strictly less than $1$. Thus, the spectrum
of $Q(ib)^k$ is contained in a ball of radius strictly less than $\rho^k$, for some $\rho<1$.
Hence, $Q(ib)\in \mathcal{\hat R}_{\beta+1}$.

It remains to clarify that $\lambda\in\mathcal{R}_{\beta+1}$.  The lack of the hat in $\mathcal{R}_{\beta+1}$ means that we look at a commutative Banach algebra (similar to $\mathcal{\hat R}_{\beta+1}$; see Appendix~\ref {sec-W} for precise definition),
since $\lambda(ib)$ is a scalar.
Under the extra assumption that the operator $\hat R$,  and  thus $\lambda$, is a  $2\pi$-periodic continuous function
supported on $(-\pi,\pi]$, this follows as in~\cite[Proof of Lemma 4.2]{Gouezel11} with the algebra $\mathcal{R}_{\beta+1}$ replaced by $\A_{\beta+1}$ recalled in Appendix~\ref{sec-W}).

To reduce to the situation of~\cite[Lemma 4.2]{Gouezel11}
 let $R^*$ denote the $2\pi$ periodic version of $\hat R$ and let $\lambda^*$ be its corresponding eigenvalue. Note that $\lambda|_{[-\pi,\pi]}=\lambda^*$.
As in~\cite[Proof of Lemma 4.2]{Gouezel11}, $\lambda^*\in\A_{\beta+1}$
and for any $k\geq 1$,
$|(\lambda^*)^k|_{\A_{\beta+1}}\leq C$, for  some $C>0$
(independent of $k$). Since we also know that $(\lambda^*)^k=\lambda^k|_{[-\pi,\pi]}$, a version of Wiener's Lemma for functions with compact support, namely Lemma~\ref{lem-AR} below,
ensures that $|\lambda(ib)^k|_{\R_{\beta+1}}\leq C$, for some $C>0$, as required.~\end{proof}

\section{ Verifying (H) for the flow $(\Psi_t)_{t\in\R}$  and proof of Theorem~\ref{thm-mr}}
\label{sec-verH}

First, it is easy to see that assumptions (H0)(i)--(ii) on $(r,\phi)$ imply (H)(i)-(ii) for the twisted transfer operator $R(e^{-s\tau})$, $s\in\barH$.
In particular, the joint aperiodicity of $(r,\phi)$  implies that $\tau$ is aperiodic, checking (H)(ii).

\subsection{Verification of (H)(iii) via Theorem~\ref{th-tailT}}

Assumption (H)(iii) is verified by Proposition~\ref{prop-Hiii}  below and Theorem~\ref{th-tailT}.
Proposition~\ref{prop-Hiii}  follows by the argument used in~\cite[Proposition 6.3]{BMT} (phrased under much weaker assumptions on the roof function of suspension flows).
I thank Ian Melbourne for the choice of $\omega$ below, the  key ingredient in the proof of Proposition~\ref{prop-Hiii}  below, and for allowing me to use it.

Recall  from Section~\ref{sec-stratT1} that $(Y,\tilde F,\tilde\alpha, \mu)$ is Gibbs Markov. Also recall from Remark~\ref{rmk-sphatr} that the perturbed transfer $R(e^{-s\tau})$, $s\in\barH$ associated with $\tilde F$ and twist $\tau$ has good spectral properties in $\B_{\vartheta_0}$
with norm $\|\, \|_{\vartheta_0}$.
 Recall that as in equation~\eqref{eq-RLapl}, $\omega : [−1, 1]\to [0, 1]$ satisfies $\int_{-1}^1\omega(t) dt = 1$. We choose 
\[
\omega(t-x)=\begin{cases} 
1-(x-t), & t-1 < x \le t, \\
1+(x-t), & t \le x < t+1, \\
0, & \text{otherwise.} 
\end{cases}
\]
Note that $\omega$ is uniformly Lipschitz, with Lipschitz constant $1$.

\begin{prop}
\label{prop-Hiii}Assumption (H)(iii) holds with
$\B = \B_\vartheta$, namely $\|R(\omega(t-\tau)\|_\vartheta\le C \mu(t-1<\tau<t+1)$.
\end{prop}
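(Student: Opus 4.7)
The plan is to exploit the Gibbs-Markov structure of $(Y,\tilde F,\tilde\alpha,\mu)$ by decomposing along $\tilde\alpha$ and reducing to partition-element estimates for $R$. Set $v_t:=\omega(t-\tau)$, and note $|v_t|_\infty\le 1$ with $\supp v_t\subset\{t-1<\tau<t+1\}$. Because $\mathcal N$ is constant on each $a\in\tilde\alpha$ and $r$ is Lipschitz on each element of $\alpha$, the roof $\tau=\sum_{j=0}^{\mathcal N-1}r\circ F^j$ is Lipschitz on every $a\in\tilde\alpha$ with respect to $d_\vartheta$. Combined with the Lipschitz constant $1$ of $\omega$, this yields $|1_a v_t|_\infty\le 1$ and $|1_a v_t|_\vartheta\le|1_a\tau|_\vartheta$.

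Let $A_t:=\{a\in\tilde\alpha:a\cap\{t-1<\tau<t+1\}\ne\emptyset\}$ and write $v_t=\sum_{a\in A_t}1_a v_t$. Applying the standard bounded-distortion bounds for the Gibbs-Markov transfer operator, using big images $\mu(\tilde F a)\ge c>0$ and H\"older continuity of the inverse Jacobian, I would obtain $|R(1_a v_t)|_\infty\le C\mu(a)|1_a v_t|_\infty$ and $|R(1_a v_t)|_\vartheta\le C\mu(a)(|1_a v_t|_\infty+|1_a v_t|_\vartheta)$. Summing over $a\in A_t$ then gives
\[
\|R v_t\|_\vartheta\le C\sum_{a\in A_t}\mu(a)(1+|1_a\tau|_\vartheta).
\]
I would next split $A_t=A_t^\flat\cup A_t^\sharp$ at a fixed threshold $M$. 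For $a\in A_t^\flat$ with $|1_a\tau|_\vartheta\le M$, the oscillation of $\tau$ on $a$ is at most $\vartheta M$, hence $a\subset\{t-1-\vartheta M<\tau<t+1+\vartheta M\}$, and the contribution is bounded by $C_M\,\mu(t-1-\vartheta M<\tau<t+1+\vartheta M)$, which by the regular variation coming from \eqref{eq-tailtau} is comparable to $\mu(t-1<\tau<t+1)$. For $a\in A_t^\sharp$ with $|1_a\tau|_\vartheta>M$, the weighted summability $\sum_a\mu(a)|1_a\tau|_\vartheta^{\epsilon_0}<\infty$ inherited from (H0)(i)---combined with the constraint that $\tau>t-1-\vartheta|1_a\tau|_\vartheta$ on $a$, which forces these elements into tail regions---yields the required bound via a dyadic decomposition on the size of $|1_a\tau|_\vartheta$ and the tail estimate for $\mu(\tau>\cdot)$.

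The hard part will be the irregular case $A_t^\sharp$: the contribution must be bounded by the thin-slice measure $\mu(t-1<\tau<t+1)$ rather than by some coarser tail, which requires marrying the weighted summability of (H0)(i) to the tail asymptotics from (H1). The triangular choice of $\omega$, Lipschitz of constant $1$ with compact support of width $2$, is the key ingredient here: it simultaneously delivers the pointwise dominance $v_t\le 1_{\{t-1<\tau<t+1\}}$ and the sharp Lipschitz bound $|1_a v_t|_\vartheta\le|1_a\tau|_\vartheta$ that feed into the Gibbs-Markov estimates above. This is the strategy of \cite[Proposition 6.3]{BMT} translated to the present setting.
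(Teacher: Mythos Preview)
Your setup and the Gibbs--Markov transfer-operator bounds are fine, but you are working harder than necessary because you miss the key simplification the paper uses: $\tau$ is \emph{uniformly} Lipschitz on elements of $\tilde\alpha$, with a single constant $C_L$ independent of $a$. This follows from the Gibbs--Markov structure (uniform expansion of $F$ together with local Lipschitz control on $r$): the contraction along the orbit compensates for the sum $\tau=\sum_{j=0}^{\mathcal N-1}r\circ F^j$, so $|1_a\tau|_\vartheta\le C_L$ for all $a\in\tilde\alpha$. With this in hand there is no need for the $A_t^\flat/A_t^\sharp$ split or any dyadic argument: every $a$ meeting $\{t-1<\tau<t+1\}$ lies in $\{t-1-C_L\le\tau\le t+1+C_L\}$, and $|1_a v_t|_\vartheta\le C_L$ uniformly. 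The paper's proof then reads off
\[
\|R(\omega(t-\tau))\|_\vartheta\ \ll\ \sum_{a\in A_t}\mu(a)\ \le\ \mu(t-1-C_L\le\tau\le t+1+C_L),
\]
which is comparable to $\mu(t-1<\tau<t+1)$ by Theorem~\ref{th-tailT}.

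Your $A_t^\sharp$ sketch is also incomplete as written: the summability $\sum_a\mu(a)|1_a r|_\vartheta^{\epsilon_0}<\infty$ in (H0)(i) is for $r$ on $\alpha$, not for $\tau$ on $\tilde\alpha$, and you have not explained how to transfer it; moreover, showing that the $A_t^\sharp$ contribution is bounded by the \emph{thin-slice} measure $\mu(t-1<\tau<t+1)$ (rather than a fatter tail) via the dyadic decomposition you allude to would require a genuine argument. All of this evaporates once you use the uniform Lipschitz constant for $\tau$.
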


\begin{proof}
 By (H0), $r$ is Lipschitz and $F$ is Gibbs Markov and in particular
uniformly expanding.
 Therefore $\tau$ is Lipschitz as well, say $|\tau(y) - \tau(y')| \leq C_L
d_\vartheta(y,y')$ for
 all $a \in \tilde \alpha$ and $y,y' \in a$.
As a consequence, $y \mapsto \omega(t - \tau(y))$ is also Lipschitz with
Lipschitz constant $C_L$ and
clearly $\omega(t-\tau) \in [0,1]$  is supported on $\{ t-1 \leq \tau
\leq t+1\}$.

Since $\tilde F$ is Gibbs Markov as well, there are constants $C_1, C_2 > 0$ such that
the Jacobian $e^{\tilde p(y)}$ satisfies
$e^{\tilde p(y)} \leq C_1\mu(a)$ and $|e^{\tilde p(y)} - e^{\tilde
p(y')}| \leq C_2\mu(a)$
for all $a \in \tilde \alpha$ and $y,y' \in a$. Thus,
$$
\| R(\omega(t-\tau))v \|_\vartheta \leq
\sum_{\stackrel{a \in \tilde \alpha}{a \cap \{ t-1 \leq \tau \leq t+1\}
\neq \emptyset}}
\hspace{-5mm}
 \left(C_2 |v|_\infty + C_1 C_L |v|_\infty + C_1 \|v\|_\vartheta +
C_1 | v |_\infty \right) \mu(a).
$$
Because $\tau$ is Lipschitz (whence $\sup_a \tau - \inf_a \tau \leq C_L$),
$a \cap \{ t-1 \leq \tau \leq t+1\} \neq \emptyset$ implies
that $a \subset \{ t-1-C_L \leq \tau \leq t+1+C_L\}$.
Therefore $\| R(\omega(t-\tau))v \|_\vartheta \ll \mu(\{ t-1-C_L \leq \tau
\leq t+1+C_L\}) \| v \|_\vartheta$ as required.
\end{proof}

With (H) verified, Theorem~\ref{thm-mr} follows from  Theorem~\ref{thm-main_smallbeta}.

\appendix

\renewcommand{\thesubsection}{\Alph{section}.\arabic{subsection}}

\section{Some previous established results used in Section~\ref{sec-abstrsetup}}
\label{sec-app}

\subsection{Proof of Equation~\eqref{eq-RLapl}}
\label{app-formulaBMT}

We quickly verify~\eqref{eq-RLapl} (based on~\cite{BMT}).
Let $\omega$ be an integrable function supported on $[-1, 1]$ 
such that $\int_{-1}^1 \omega(t)\, dt = 1$ and for $s\in\bar H$, set $\hat\omega(s)=\int_{-1}^1 e^{-st} \omega(t)\, dt$. Note that $\hat\omega(s)$ is analytic on $\H$, $C^\infty$ on  any compact interval of $\{ib:b\in\R\}$
and $\hat\omega(0)=1$. Since $\tau\ge 1$ and $\supp\omega\subset [-1,1]$,
$
\int_0^\infty \omega(t-\tau) e^{-st}\, dt=e^{-s\tau}\int_{-\tau}^\infty \omega(t)\, e^{-st}\, dt=e^{-s\tau}\hat\omega(s)
$.
Hence,
\[
\int_0^\infty R(\omega(t-\tau)v) e^{-st}\, dt=R(\int_0^\infty \omega(t-\tau) v e^{-st}\, dt)=\hat\omega(s)\hat R(s) v.
\]
Formula~\eqref{eq-RLapl} follows with $g_0(s)=1/\hat\omega(s)$, so $g_0(0) = 1$, $g_0$ is analytic on $\H$ and $C^\infty$ on  any compact of $\{ib:b\in\R\}$.

\subsection{A result used in the proof of Lemma~\ref{lemma-algMpowernontr}}
The result below was established in~\cite{MelTer17} and it holds in the present setting due to Lemma~\ref{lem-continf}.
Although, \cite[Proposition 13.4]{MelTer17} is stated and proved using $\B=\B_\vartheta$,  the proof goes word for  word the same, with a general Banach space $\B$ provided that (H)(i)-(iii)
and Lemma~\ref{lem-continf} hold. 

\begin{prop}{~\cite[Proposition 13.4]{MelTer17}  }
\label{prop-tildeR} Assume (H)(i)-(iii) and recall $\beta\in(0,1)$.
Let $p<\beta$,  let $\epsilon>0$ and
let $\delta>0$.  For all $r>0$ sufficiently small, there exists 
a $C^{p-\epsilon}$ family $b\mapsto \tilde R(b)$ with a $C^{p-\epsilon}$ family of simple eigenvalues
$\tilde\lambda(b)\in\{s\in\C:|s-1|<\delta\}$ such that
\begin{itemize}
\item[(a)] $\tilde R(b)\equiv \hat R(ib)$ for $|b|\le r$.
\item[(b)] $\tilde R(b)\equiv \hat R(0)$ and $\tilde\lambda(b)\equiv1$ for $|b|\ge 2$.
\item[(c)] $\|\tilde R(b)-\hat R(0)\|_\B<\delta$ for all $b\in\R$.
\item[(d)] 
For all $b\in\R$,
the spectrum of $\tilde R(b)$ consists of $\tilde\lambda(b)$ together with a subset of $\{s:|s-1|\ge 3\delta\}$.
\end{itemize}
\end{prop}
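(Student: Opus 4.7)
The plan is to construct $\tilde R(b)$ as a smooth interpolation between $\hat R(ib)$ near $b=0$ and the fixed operator $\hat R(0)$ away from $b=0$, and then obtain $\tilde\lambda(b)$ from analytic perturbation theory about $\hat R(0)$. Fix a $C^\infty$ cutoff $\rho:\R\to[0,1]$ with $\rho\equiv 1$ on $[-1,1]$ and $\supp\rho\subset[-2,2]$, and for the small parameter $r>0$ set $\rho_r(b)=\rho(b/r)$. Define
\[
\tilde R(b):=\rho_r(b)\,\hat R(ib)+\bigl(1-\rho_r(b)\bigr)\hat R(0).
\]
Properties (a) and (b) are then immediate from $\rho_r\equiv 1$ on $[-r,r]$ and $\supp\rho_r\subset[-2r,2r]\subset[-2,2]$ (assuming $r\le 1$).

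For (c), Lemma~\ref{lem-continf} gives, on $\supp\rho_r$,
\[
\|\tilde R(b)-\hat R(0)\|_\B=\rho_r(b)\,\|\hat R(ib)-\hat R(0)\|_\B\le C|b|^\beta\le C(2r)^\beta,
\]
and the left-hand side vanishes off $\supp\rho_r$, so choosing $r$ such that $C(2r)^\beta<\delta$ secures (c). For (d) and the eigenvalue family, (H)(i) says $\hat R(0)$ has $1$ as a simple isolated eigenvalue, separated from the rest of its spectrum by some gap $\eta>0$; shrinking $\delta$ if needed so that $4\delta<\eta$, the contour $\Gamma=\{|z-1|=2\delta\}$ stays in the resolvent set of $\hat R(0)$, and a Neumann series argument using $\|\tilde R(b)-\hat R(0)\|_\B<\delta$ shows $\Gamma$ also lies in the resolvent set of every $\tilde R(b)$. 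The spectral projection
\[
\tilde P(b):=\frac{1}{2\pi i}\oint_{\Gamma}(z-\tilde R(b))^{-1}\,dz
\]
is then of rank one, and the unique eigenvalue it encloses is the desired $\tilde\lambda(b)\in\{|s-1|<\delta\}$. Running the same Neumann-series argument on the contour $\{|z-1|=3\delta\}$ confirms that no further spectrum of $\tilde R(b)$ lies in $\{\delta\le|s-1|<3\delta\}$, so the rest of $\sigma(\tilde R(b))$ sits in $\{|s-1|\ge 3\delta\}$, yielding (d).

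The regularity assertions follow from the construction: Lemma~\ref{lem-continf} yields that $b\mapsto\hat R(ib)$ is $\beta$-Hölder on compact intervals, and combined with the $C^\infty$ cutoff $\rho_r$ this makes $\tilde R(\cdot)$ globally $\beta$-Hölder on $\R$. Since $p<\beta$, this regularity is strictly stronger than $C^{p-\epsilon}$ for any $\epsilon>0$. Both $\tilde P(b)$ and $\tilde\lambda(b)$ inherit the same Hölder exponent from the contour-integral formula, because the resolvent is analytic in the operator argument and composition with a Hölder map preserves the Hölder exponent. The main technical obstacle is securing \emph{uniform} resolvent bounds along $\Gamma$ as $b$ ranges over all of $\R$; this is what forces the careful order of parameter choice (first $\delta$ from the spectral gap in (H)(i), then $r$ from Lemma~\ref{lem-continf}), and it is the only step where the underlying spectral assumption on $\hat R(0)$ is used in an essential way.
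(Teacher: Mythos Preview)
Your cutoff construction $\tilde R(b)=\rho_r(b)\hat R(ib)+(1-\rho_r(b))\hat R(0)$ is the standard one underlying \cite[Proposition~13.4]{MelTer17}, and the present paper gives no independent proof---it simply invokes that reference, noting that Lemma~\ref{lem-continf} supplies the H\"older continuity of $b\mapsto\hat R(ib)$ needed for the cited argument to go through verbatim in a general Banach space~$\B$. So your approach agrees with the source the paper defers to.

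Two minor points are worth tightening. First, the contour $\Gamma=\{|z-1|=2\delta\}$ only places $\tilde\lambda(b)$ in $\{|s-1|<2\delta\}$; to land in $\{|s-1|<\delta\}$ as the statement requires, invoke the first-order perturbation bound $|\tilde\lambda(b)-1|\le C\|\tilde R(b)-\hat R(0)\|_\B$ (with $C$ depending only on the resolvent of $\hat R(0)$ along a fixed contour) and shrink $r$ so that $C\cdot C(2r)^\beta<\delta$. Second, for (d), knowing that the single circle $\{|z-1|=3\delta\}$ lies in the resolvent set does not by itself exclude spectrum in the open annulus $\{\delta\le|s-1|<3\delta\}$; instead apply the same continuity-of-rank argument you used for $\tilde P(b)$ to the spectral projection along $\{|z-1|=3\delta\}$, which then also has rank one, so $\tilde\lambda(b)$ is the only spectrum enclosed. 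With these adjustments your argument is complete.
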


\subsection{Wiener's Lemma
for continuous (not necessarily periodic) functions}
\label{sec-W}

Let $G:\R\to\B$ be operator valued functions, where $\B$ is a Banach space with norm $\|\, \|_\B$.
Let $\hat\A$ be the (non-commutative) Banach algebra of $2\pi$-periodic continuous functions
$G:\R\to\B$ such that their Fourier coefficients $\hat G_n$ are absolutely summable, with norm
$\|G\|_{\hat\A}=\sum_{n\in\Z}\|\hat G_n\|_{\B}$. Let $\hat\A_{\beta+1}=\{G\in\hat\A:\sup_{n\in\Z}\ell(|n|)|n|^{\beta+1}|\hat G_n|<\infty\}$ be the Banach algebra
with norm
$\|G\|_{\hat\A_{\beta+1}}=\sum_{n\in\Z}|\hat G_n|+\sup_{n\in\Z}\ell(|n|)|n|^{\beta+1}|\hat G_n|$.
Recall that $\mathcal{\hat R}$ is the non-commutative Banach algebra of continuous functions
$G:\R\to\B$ such that their Fourier transform $\hat G:\R\to\B$ lies in $L^1(\R)$, with norm $\|G\|_{\mathcal{\hat R}}=\int_{-\infty}^\infty\|\hat G(\xi)\|_{\B}\,d\xi$
and that $\mathcal{\hat R}_{\beta+1}=\{G\in\mathcal{\hat R}:\sup_{\xi\in\R}\ell(|\xi|)|\xi|^{\beta+1}\|\hat G(\xi)\|_{\B}<\infty\}$ is a  Banach algebra with norm
$\|G\|_{{\mathcal{\hat R}_{\beta+1}}}=\int_{-\infty}^\infty\|\hat G(\xi)\|_{\B}\,d\xi+\sup_{\xi\in\R}\ell(|\xi|)|\xi|^{\beta+1}\|\hat G(\xi)\|_{\B}$.

Similar definitions apply to the commutative Banach algebras $\A, \A_{\beta+1},\mathcal{R}, \mathcal{R}_{\beta+1}$
 starting from complex valued functions  $G:\R\to\C$.

\begin{lemma}{~\cite[Lemma 8]{BP42}}
 \label{lem-W}
Let $\beta>0$ and let $G_0,G_1\in\mathcal{\hat R}_{\beta+1}$.
Suppose $G_1$ is compactly supported and that $G_0$ is bounded away from zero on the support of $G_1$.
Then there exists $G_2\in\mathcal{\hat R}_{\beta+1}$ such that
$G_1=G_0 G_2$.
\end{lemma}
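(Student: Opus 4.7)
The statement is a version of Wiener's lemma in the noncommutative Banach algebra $\mathcal{\hat R}_{\beta+1}$: a compactly supported element can be divided by a uniformly invertible one while remaining in the algebra. My plan is to adapt the classical partition-of-unity proof of Wiener's $1/f$ theorem to the present operator-valued setting.

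First, I would use that $\supp G_1$ is compact and $G_0$ is continuous on $\R$ with $\|G_0(x)^{-1}\|_\B\le M$ there, so that by continuity the same estimate persists on some compact neighbourhood $K\supset\supp G_1$. For each $x_0\in K$, set $c_{x_0}=G_0(x_0)$; I then construct a local inverse $H_{x_0}\in\mathcal{\hat R}_{\beta+1}$ satisfying $G_0 H_{x_0}\equiv I$ on some neighbourhood $U_{x_0}$ of $x_0$. The construction is via a Neumann series: pick smooth bumps $\phi_{x_0},\phi^*_{x_0}\in\mathcal{\hat R}_{\beta+1}$ supported in a small neighbourhood of $x_0$, with $\phi^*_{x_0}\equiv 1$ on $\supp\phi_{x_0}$ and $\phi_{x_0}\equiv 1$ on $U_{x_0}$, and set
\[
H_{x_0}:=c_{x_0}^{-1}\sum_{n\ge 0}\bigl(c_{x_0}^{-1}\phi^*_{x_0}(c_{x_0}-G_0)\bigr)^{n}\phi^*_{x_0}.
\]
This converges in $\mathcal{\hat R}_{\beta+1}$ as soon as $\|c_{x_0}^{-1}\phi^*_{x_0}(G_0-c_{x_0})\|_{\mathcal{\hat R}_{\beta+1}}<1$, and on the set where $\phi^*_{x_0}\equiv 1$ the geometric series sums pointwise to $G_0(x)^{-1}c_{x_0}$, giving $H_{x_0}(x)=G_0(x)^{-1}$ on $U_{x_0}$.

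Then by compactness I would extract a finite subcover $U_{x_1},\dots,U_{x_N}$ of $K$ and a subordinate partition of unity $\psi_j\in\mathcal{\hat R}_{\beta+1}$ with $\supp\psi_j\subset U_{x_j}$ and $\sum_j\psi_j\equiv 1$ on $\supp G_1$. Setting
\[
G_2:=\sum_{j=1}^{N}\psi_j\,H_{x_j}\,G_1\in\mathcal{\hat R}_{\beta+1},
\]
and using that each $\psi_j$ is a scalar cutoff (hence commutes with $G_0$) and that $G_0 H_{x_j}\equiv I$ on $\supp\psi_j$, one obtains
\[
G_0 G_2=\sum_j\psi_j\,(G_0 H_{x_j})\,G_1=\sum_j\psi_j\,G_1=G_1,
\]
which is the required factorisation.

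The hard part will be the local-inversion step, namely showing that $\|\phi^*_{x_0}(G_0-G_0(x_0))\|_{\mathcal{\hat R}_{\beta+1}}$ can be made arbitrarily small by shrinking $\supp\phi^*_{x_0}$. A naive rescaling of a fixed bump does not suffice, since the Fourier-side weight $|\xi|^{\beta+1}$ actually inflates the norm of narrowly supported functions. The way around this is to combine the fact that $G_0-G_0(x_0)$ vanishes at $x_0$ with the H\"older-type smoothness furnished by the decay rate $\beta+1>1$ of $\hat G_0$, by means of a direct convolution estimate on the Fourier side; this regularity property of $\mathcal{\hat R}_{\beta+1}$ is precisely the content underlying the Bochner--Phillips lemma in~\cite{BP42}.
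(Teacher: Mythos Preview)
Your approach is the Bochner--Phillips argument carried out directly in $\mathcal{\hat R}_{\beta+1}$: local Neumann-series inverses glued via a scalar partition of unity. The paper takes a shorter route. Since $G_1$ has compact support, Lemma~\ref{lem-AR} identifies compactly supported elements of $\mathcal{\hat R}_{\beta+1}$ with $2\pi$-periodic elements of $\hat\A_{\beta+1}$; after this transfer the statement becomes literally \cite[Lemma~8]{BP42} in the periodic algebra (together with \cite[Lemma~6]{BP42}), where the local-smallness step you flag as hard is already established. So the paper sidesteps reworking that estimate on the line by reducing to the circle first; your approach is more self-contained in spirit but must re-prove precisely the technical point you end up deferring to~\cite{BP42} anyway.

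One thing to repair: in the non-commutative setting the position of $c_{x_0}^{-1}$ matters. With your formula one finds, on $\{\phi^*_{x_0}\equiv1\}$, that $H_{x_0}=c_{x_0}^{-1}(c_{x_0}^{-1}G_0)^{-1}=c_{x_0}^{-1}G_0^{-1}c_{x_0}$, so $G_0H_{x_0}=G_0\,c_{x_0}^{-1}G_0^{-1}c_{x_0}\neq I$ in general. Moving the outer $c_{x_0}^{-1}$ to the right, i.e.\ taking $H_{x_0}:=\sum_{n\ge0}\bigl(c_{x_0}^{-1}\phi^*_{x_0}(c_{x_0}-G_0)\bigr)^{n}c_{x_0}^{-1}\phi^*_{x_0}$, gives $H_{x_0}=G_0^{-1}$ locally and your gluing step then goes through as written.
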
 
The original~\cite[Lemma 8]{BP42} is stated for a Banach algebra $\mathcal{\hat R}$ of $2\pi$ periodic functions. However, given Lemma~\ref{lem-AR} below (a version of~\cite[Lemma 7]{BP42})
Lemma~\ref{lem-W} follows by the argument used in~\cite[Proof of Lemma 8]{BP42}, which requires~\cite[Lemma 6]{BP42} (which holds with $R'$ there replaced by $\hat\A$ defined here)
and Lemma~\ref{lem-AR} below.

\begin{lemma}
\label{lem-AR}
Let $\epsilon>0$.  Suppose that $G:\R\to\B$ is a continuous function with
$\supp G\subset[-\pi+\epsilon,\pi-\epsilon]$.
Let $H:\R\to\B$ denote the $2\pi$-periodic continuous function such that
$H|_{[-\pi,\pi]}=G|_{[-\pi,\pi]}$.
Then $G\in\mathcal{\hat R}$ if and only if $H\in\hat\A$. Moreover, $f\in\mathcal{\hat R}_{\beta+1}$ if and only if $H\in\hat\A_{\beta+1}$.
\end{lemma}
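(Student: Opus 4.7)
The plan is to exploit a smooth compactly supported cutoff function that bridges the two Banach algebras. I would fix a function $\psi \in C_c^\infty(\R)$ with $\psi \equiv 1$ on $[-\pi+\epsilon,\pi-\epsilon]$ and $\supp \psi \subset (-\pi,\pi)$. Because $\psi$ is Schwartz, so is its Fourier transform $\hat\psi$; in particular $\hat\psi \in L^1(\R)$, and the periodized sum $M(\eta) := \sum_{n\in\Z} |\hat\psi(n-\eta)|$ is uniformly bounded on $\R$.

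For the direction $H\in\hat\A \Rightarrow G\in\mathcal{\hat R}$, I would expand $H(x) = \sum_n \hat H_n e^{inx}$ (absolutely convergent), multiply by $\psi$ to recover $G$ (since $\psi\equiv 1$ on $\supp G$ and $H=G$ on $[-\pi,\pi]$), and observe that the Fourier transform of $\psi(x)e^{inx}$ is the translate $\hat\psi(\xi-n)$. This yields $\hat G(\xi) = \sum_n \hat H_n \hat\psi(\xi-n)$, whence
\[
\int_\R \|\hat G(\xi)\|_\B\, d\xi \leq \|\hat\psi\|_{L^1} \sum_n \|\hat H_n\|_\B = \|\hat\psi\|_{L^1}\,\|H\|_{\hat\A}.
\]
For the reverse direction, since $G$ is supported in $(-\pi+\epsilon,\pi-\epsilon)$ the periodization defining $H$ has no wrap-around, and a direct Poisson-type computation yields $\hat H_n = c\,\hat G(n)$ for a convention-dependent constant $c$. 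Writing $G=\psi G$ gives the convolution identity $\hat G = \hat\psi * \hat G$ (up to the same constant), so that $\hat G(n) = \int \hat\psi(n-\eta)\hat G(\eta)\, d\eta$. Fubini then produces
\[
\sum_n \|\hat G(n)\|_\B \leq \int M(\eta)\,\|\hat G(\eta)\|_\B\, d\eta \leq \|M\|_\infty\,\|\hat G\|_{L^1(\R,\B)},
\]
giving $H\in\hat\A$.

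For the weighted versions $\hat\A_{\beta+1}$ and $\mathcal{\hat R}_{\beta+1}$, I would reuse the representations $\hat G(\xi) = \sum_n \hat H_n \hat\psi(\xi-n)$ and $\hat G(n) = \int \hat\psi(n-\eta)\hat G(\eta)\, d\eta$, splitting each sum or integral into a ``near'' regime (where $|n|$ and $|\xi|$, or $|\eta|$ and $|n|$, are comparable) and a ``far'' regime. In the near regime the weight hypothesis on the large index (i.e., $\|\hat H_n\|_\B \ll |n|^{-(\beta+1)}$ or $\|\hat G(\eta)\|_\B \ll |\eta|^{-(\beta+1)}$) supplies the required $|\xi|^{-(\beta+1)}$ or $|n|^{-(\beta+1)}$ decay; in the far regime the Schwartz decay of $\hat\psi$ (faster than any polynomial, in particular faster than $\beta+2$) dominates after summation or integration.

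The main obstacle is the direction $G\in\mathcal{\hat R}\Rightarrow H\in\hat\A$: passing from $L^1$ integrability of $\hat G$ on $\R$ to $\ell^1$ summability of its integer samples is false in general. What rescues us is precisely the compact support of $G$ strictly inside $(-\pi,\pi)$, which permits the smoothing identity $\hat G = \hat\psi*\hat G$; Schwartz decay of $\hat\psi$ then spreads the $L^1$ mass of $\hat G$ over unit-length intervals, converting $\|\hat G\|_{L^1(\R)}$ into the desired $\ell^1$ summability of the samples via the uniform bound on $M(\eta)$.
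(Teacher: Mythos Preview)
Your proof is correct. The paper itself does not give a self-contained argument for this lemma: it simply cites \cite[Lemma~7]{BP42} and \cite[Theorem~6.2, Ch.~VIII]{Katzn} for the unweighted equivalence $G\in\mathcal{\hat R}\Leftrightarrow H\in\hat\A$, and \cite[Lemma~A.3]{MelTer17} for the weighted version. Your smooth-cutoff argument (write $G=\psi H$ to pass from Fourier series to Fourier integrals, and use $G=\psi G$ together with the Schwartz decay of $\hat\psi$ to convert $L^1$ integrability of $\hat G$ into $\ell^1$ summability of its integer samples) is precisely the classical method behind those references, so you have supplied what the paper outsources. The near/far splitting for the $|\xi|^{\beta+1}$ and $|n|^{\beta+1}$ weights is also the standard device and matches what \cite[Lemma~A.3]{MelTer17} does.
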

\begin{proof}
The first part on $\mathcal{\hat R}, \hat\A$ is known: see~\cite[Lemma 7]{BP42} (see also~\cite[Theorem 6.2, Ch. VIII, p.~242]{Katzn} for the standard version with commutative Banach algebras).
The second part on $\mathcal{\hat R}_{\beta+1}, \hat\A_{\beta+1}$, follows by, for instance, the argument of~\cite[Lemma A.3]{MelTer17}; the statement and proof of~\cite[Lemma A.3]{MelTer17}
is in terms of the commutative Banach algebras $\mathcal{ R}_{\beta+1}, \A_{\beta+1}$, 
but everything in~\cite[Proof of Lemma A.3]{MelTer17} holds with $\mathcal{\hat R}_{\beta+1}, \hat\A_{\beta+1}$
instead of  $\mathcal{ R}_{\beta+1}, \A_{\beta+1}$.~\end{proof}

{\bf Acknowledgments:}
The support of EPSRC grant EP/S019286/1 is gratefully acknowledged. I also wish thank the referees for their very useful comments that helped me improve the presentation.

\end{document}